\newtheorem{theorem}{Theorem}[section]
\newtheorem*{theorem*}{Theorem B} 
\newtheorem{lemma}[theorem]{Lemma}
\newtheorem{proposition}[theorem]{Proposition}
\newtheorem{corollary}[theorem]{Corollary}
\newtheorem*{definition*}{Definition}
\newtheorem*{remark*}{Remark}
\newtheorem*{observation*}{Observation}
\newtheorem*{assumption*}{Assumption}
\theoremstyle{definition}
\newtheorem{definition}{Definition}[section]
\newtheorem{question}{Question}
\newtheorem{problem}{Problem}
\theoremstyle{remark}
\newtheorem{remark}{Remark}[section]
\newcommand{\R}{\mathbb{R}}
\newcommand{\N}{\mathbb{N}}
\newcommand{\Z}{\mathbb{Z}}
\newcommand{\D}{\mathbb{D}}
\newcommand{\C}{\mathbb{C}}
\newcommand{\E}{\mathbb{E}}
\newcommand{\T}{\mathbb{T}}
\newcommand{\B}{\mathbb{B}}
\newcommand{\BS}{\mathbb{S}}
\newcommand{\F}{\mathbb{F}}
\newcommand{\TT}{\mathcal{T}}
\newcommand{\HH}{\mathcal{H}}
\newcommand{\BH}{\bold{H}}
\newcommand{\spann}{\mathrm{span}}
\newcommand{\Cov}{\mathrm{Cov}}
\newcommand{\pless}{\preccurlyeq}
\newcommand{\HP}{\mathcal{HP}}
\newcommand{\BA}{\bold{A}}
\newcommand{\an}{\text{\, and \,}}
\newcommand{\ch}{\mathbbm{1}}
\begin{document}

\title[Hyper-positive definite functions]{Hyper-positive definite functions I: scalar case,  \\ Branching-type stationary stochastic processes}

\author
{Yanqi Qiu}
\address
{Yanqi QIU: Institute of Mathematics and Hua Loo-Keng Key Laboratory of Mathematics, AMSS, Chinese Academy of Sciences, Beijing 100190, China.}
\email{yanqi.qiu@amss.ac.cn; yanqi.qiu@hotmail.com}

\author{Zipeng Wang}
\address{Zipeng Wang: Department of Mathematics, Soochow University, Suzhou 215006, China.}
\email{zipengwang2012@gmail.com}

\begin{abstract}
We propose a definition of branching-type stationary stochastic processes on rooted trees and  related  definitions of hyper-positivity for functions on the unit circle and functions on the set of non-negative integers. We then obtain (1) a necessary and sufficient condition on a rooted tree for the existence of non-trivial branching-type stationary stochastic processes on it, (2) a complete criterion of the hyper-positive functions in the setting of rooted homogeneous trees in terms of  a variant of the classical Herglotz-Bochner Theorem, (3) a prediction theory result for branching-type stationary stochastic processes.

 As an unexpected application,   we obtain  natural hypercontractive inequalities for Hankel operators with hyper-positive symbols.
\end{abstract}

\subjclass[2010]{Primary 60G10, 60G25, 43A35; Secondary 30H10, 47B35}
\keywords{Hyper-positivity, positive definite kernel, Herglotz-Bochner theorem, hypercontractivity, Hankel operators.}

\maketitle

\setcounter{equation}{0}

\section{Introduction}

\subsection{Branching-type stationary stochastic processes on rooted trees}

In this paper, we propose a definition of {\it branching-type stationary stochastic processes (abbr. branching-type SSP) on rooted trees}. One of our main results is the full classification of such new processes on {\it rooted homogeneous trees}.

\begin{center}
\begin{tikzpicture}

\filldraw (0,10) circle (2pt);
\node (e) at (0-0.1,10+0.3) {$e = \text{root vertex}$} ;
\draw (0,10) --(-2,9);
\filldraw (-2, 9) circle (2pt);
\draw (-2,9) --(-3,8);
\draw (-2,9) --(-1,8);
\filldraw (-3, 8) circle (2pt);
\node (e) at (-3-0.1,8+0.3) {$\tau$} ;
\draw (-3,8) --(-3.5,7);
\draw (-3,8) --(-2.5,7);
\filldraw (-3.5, 7) circle (2pt);
\draw (-3.5,7) --(-3.8,6);
\draw (-3.5,7) --(-3.2,6);
\filldraw (-3.8, 6) circle (2pt);
\draw[dashed](-3.8,6)--(-3.6,5);
\draw[dashed](-3.8,6)--(-4,5);
\filldraw (-3.2, 6) circle (2pt);
\draw[dashed](-3.2,6)--(-3,5);
\draw[dashed](-3.2,6)--(-3.4,5);
\filldraw (-2.5, 7) circle (2pt);
\draw(-2.5,7)--(-2.8,6);
\draw(-2.5,7)--(-2.2,6);
\filldraw (-2.8, 6) circle (2pt);
\draw[dashed](-2.8,6)--(-2.6,5);
\draw[dashed](-2.8,6)--(-3,5);
\filldraw (-2.2, 6) circle (2pt);
\draw[dashed](-2.2,6)--(-2,5);
\draw[dashed](-2.2,6)--(-2.4,5);
\filldraw (-1, 8) circle (2pt);
\draw(-1,8)--(-1.5,7);
\draw(-1,8)--(-0.5,7);
\filldraw (-1.5, 7) circle (2pt);
\draw(-1.5,7)--(-1.8,6);
\draw(-1.5,7)--(-1.2,6);
\filldraw (-1.8, 6) circle (2pt);
\draw[dashed](-1.8,6)--(-1.6,5);
\draw[dashed](-1.8,6)--(-2,5);
\filldraw (-1.2, 6) circle (2pt);
\draw[dashed](-1.2,6)--(-1,5);
\draw[dashed](-1.2,6)--(-1.4,5);
\filldraw (-0.5, 7) circle (2pt);
\draw(-0.5,7)--(-0.8,6);
\draw(-0.5,7)--(-0.2,6);
\filldraw (-0.8, 6) circle (2pt);
\draw[dashed](-0.8,6)--(-0.6,5);
\draw[dashed](-0.8,6)--(-1,5);
\filldraw (-0.2, 6) circle (2pt);
\draw[dashed](-0.2,6)--(-0.4,5);
\draw[dashed](-0.2,6)--(-0,5);
\filldraw (2, 9) circle (2pt);
\draw[color=red!60, fill=red!5, thick]  (0,10) --(2,9);
\draw[color=blue!60, fill=blue!5, thick]  (0.08,10) --(2.08,9);
\node (e) at (2-0.1,9+0.3) {$\sigma$} ;
\draw[color=red!60, fill=red!5, thick]  (2,9) --(1,8);
\draw[color=blue!60, fill=blue!5, thick]  (2.05,9) --(1.05,8);
\filldraw (1, 8) circle (2pt);
\draw[color=red!60, fill=red!5, thick]  (1,8) --(1.5,7);
\draw[color=blue!60, fill=blue!5, thick]  (1,8) --(0.5,7);
\filldraw (1.5, 7) circle (2pt);
\node (e) at (1.5+0.1,7+0.3) {$\delta$} ;
\draw[color=red!60, fill=red!5, thick]  (1.5,7) --(1.8,6);
\draw(1.5,7)--(1.2,6);
\filldraw (1.8, 6) circle (2pt);
\draw[color=red!60, fill=red!5, thick,dashed](1.8,6)--(1.6,5);
\draw[dashed](1.8,6)--(2,5);
\filldraw (1.2, 6) circle (2pt);
\draw[dashed](1.2,6)--(1.4,5);
\draw[dashed](1.2,6)--(1,5);
\filldraw (0.5, 7) circle (2pt);
\draw[color=blue!60, fill=blue!5, thick]  (0.5,7) --(0.2,6);
\draw(0.5,7)--(0.8,6);
\filldraw (0.8, 6) circle (2pt);
\draw[dashed](0.8,6)--(0.6,5);
\draw[dashed](0.8,6)--(1.0,5);
\filldraw (0.2, 6) circle (2pt);
\draw[color=blue!60, fill=blue!5, thick,dashed](0.2,6)--(0.4,5);
\draw[dashed](0.2,6)--(0,5);
\draw(2,9)--(3,8);
\filldraw (3, 8) circle (2pt);
\draw(3,8)--(2.5,7);
\draw(3,8)--(3.5,7);
\filldraw (2.5, 7) circle (2pt);
\draw(2.5,7)--(2.8,6);
\draw(2.5,7)--(2.2,6);
\filldraw (2.8, 6) circle (2pt);
\draw[dashed](2.8,6)--(3,5);
\draw[dashed](2.8,6)--(2.6,5);
\filldraw (2.2, 6) circle (2pt);
\draw[dashed](2.2,6)--(2,5);
\draw[dashed](2.2,6)--(2.4,5);
\filldraw (3.5, 7) circle (2pt);
\draw(3.5,7)--(3.8,6);
\draw(3.5,7)--(3.2,6);
\filldraw (3.8, 6) circle (2pt);
\draw[dashed] (3.8,6) -- (4.05,5);
\draw[dashed] (3.8,6) -- (3.6,5);
\filldraw (3.2, 6) circle (2pt);
\draw[dashed] (3.2,6) -- (3.4,5);
\draw[dashed] (3.2,6) -- (3.05,5);
\node at (0,4) {Figure 1: Examples of rooted geodesic rays in $T_2$ (blue ray and red ray)};
\end{tikzpicture}
\end{center}

Let $T$ be the set of vertices of an {\it infinite rooted tree} with a distinguished vertex $e\in T$ (called the root of the tree).  $T$ is  equipped with the usual graph distance $d(\cdot, \cdot)$. In what follows, we always make the following assumption.
{\flushleft \bf Assumption on $T$}: the rooted tree $T$ has no leaves, that is, any vertex of $T$ has at least one descendant.

 There is a natural partial order  $\pless$ on  $T$ making the root the smallest element: two vertices of $T$ are comparable if and only if they are in the same rooted geodesic ray and we say $\sigma_1 \pless \sigma_2$ if two vertices $\sigma_1, \sigma_2\in T$ are in the same rooted geodesic ray with $d(\sigma_1, e) \le d(\sigma_2, e)$.   For instance, in Figure 1, the vertices $\sigma, \delta$ are comparable with $\sigma \pless \delta$ while the vertices $\tau, \delta$ are non-comparable.  

Before giving the definition of branching-type stationary stochastic processes on $T$, we first briefly recall the classical weak stationary stochastic processes  (abbr. SSP)  on the set $\N = \{0, 1, 2, \cdots\}$ of non-negative integers : a {\it mean-zero} stochastic process $X = (X_n)_{n\in \N}$ on  $\N$ is called weak stationary if all the random variables $X_n$ admit finite second moments and the covariance $\Cov(X_n, X_{n+k})  = \E(X_n \overline{X_{n+k}})$,  for any $n, k\in \N$, is independent of $n$. By the well-known Herglotz-Bochner Theorem,  any SSP $X= (X_n)_{n\in \N}$  on $\N$ admits a spectral measure, denoted by $\nu_X$,  on the unit circle $\T= \R/2 \pi \Z$ which is the unique positive Radon measure on $\T$ characterized by the formula 
\begin{align}\label{def-spec}
\Cov(X_n, X_{n+k}) = \widehat{\nu_X}(k) = \int_\T  e^{-i  k \theta} d \nu_X(\theta), \quad  \forall n, k \in \N.
\end{align}

Set 
\[
\partial T : = \Big\{\xi\Big|\text{$\xi$ is a geodesic ray in $T$ starting from the root}\Big\}.
\]
Note that, each rooted geodesic ray $\xi \in \partial T$ can be canonically identified with the set $\N$.

\begin{definition}[Branching-type SSP]
A mean-zero square-integrable stochastic process $(X_\sigma)_{\sigma \in T}$ on $T$ is called a branching-type SSP if 
\begin{itemize}
\item Restricted on every rooted geodesic ray $\xi \in \partial T$, we have a classical stationary  stochastic process $X^\xi: = (X_\sigma)_{\sigma \in \xi}$, by identifying canonically the subset $\xi\subset T$ with the set $\N$.
\item The family of these stationary stochastic processes $X^\xi, \xi \in \partial T$ share a common spectral measure. That is, 
\begin{align}\label{spec-BSSP}
\nu_{X^\xi} = \nu_{X^{\xi'}}, \quad \forall \xi, \xi' \in \partial T.
\end{align}
\item For any pair of non-comparable vertices $\sigma, \tau \in T$, the random variables $X_\sigma, X_\tau$ are un-correlated. That is, 
\begin{align}\label{def-uncor}
\Cov(X_\sigma, X_\tau) = 0, \quad \text{for all non-comparable $\sigma, \tau \in T$.}
\end{align}
\end{itemize}
\end{definition}

We define the spectral measure $\mu_X$ of a branching-type SSP $X = (X_\sigma)_{\sigma \in T}$ the common spectral measure in  \eqref{spec-BSSP}. That is, 
\begin{align}\label{def-mu-X}
\mu_X:  = \nu_{X^\xi}, \quad \forall \xi \in \partial T.
\end{align}

Clearly, given a rooted tree $T$, all the spectral information is carried on its spectral measure.  Note also that any branching-type SSP $(X_\sigma)_{\sigma \in T}$ on $T$ has a Gaussian version $(Y_\sigma)_{\sigma \in T}$, namely, $(Y_\sigma)_{\sigma \in T}$ is a centered Gaussian process on $T$ such that 
\[
\Cov(Y_\sigma, Y_\tau) = \Cov(X_\sigma, X_\tau), \quad \forall \sigma, \tau \in T.
\]
In particular,  in the case of Gaussian branching-type SSP, the condition \eqref{def-uncor} for $(Y_\sigma)_{\sigma \in T}$ becomes the condition that $Y_\sigma$ and $Y_\tau$ are independent for any pair of non-comparable $\sigma, \tau \in T$.

A trivial example of branching-type SSP on $T$ is the i.i.d. centered Gaussian process on $T$. We say that a branching-type SSP on $T$ is {\it non-trivial} if there exists a pair $(\sigma, \delta)$ of vertices of $T$ such that $\Cov(X_\sigma, X_\delta) \ne 0$.

\begin{theorem}[General rooted trees]\label{thm-general-T}
There exists a non-trivial branching-type SSP on $T$  if and only if $T$ is of uniformly bounded valence (i.e., there is a uniform bound on the number of edges adjacent to all vertices of $T$).
\end{theorem}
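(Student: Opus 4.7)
The plan is to prove the two implications separately. For the necessary direction, assume $T$ has no uniform bound on valence and let $(X_\sigma)_{\sigma \in T}$ be any branching-type SSP on $T$; we show every Fourier coefficient $\gamma_k := \widehat{\mu_X}(k)$ vanishes for $k \ge 1$, forcing triviality. By the common-spectrum axiom $\|X_\sigma\|_2^2 = \gamma_0$ for every $\sigma$, and $\Cov(X_\sigma, X_{\sigma'}) = \gamma_k$ whenever $\sigma \pless \sigma'$ with $d(\sigma, \sigma') = k$. Fix $k \ge 1$ and $N \in \N$. Since valence is unbounded and $T$ has no leaves, there is a vertex $\rho$ with at least $N$ children, and within each child's subtree one can pick a descendant at depth $k - 1$ below the child, producing vertices $\delta_1, \dots, \delta_N$ all at distance $k$ from $\rho$ and pairwise non-comparable. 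The uncorrelation axiom gives $\bigl\|\sum_i X_{\delta_i}\bigr\|_2^2 = N\gamma_0$, while $\bigl|\langle X_\rho, \sum_i X_{\delta_i}\rangle\bigr| = N|\gamma_k|$, so Cauchy--Schwarz yields $|\gamma_k| \le \gamma_0/\sqrt{N}$; sending $N \to \infty$ forces $\gamma_k = 0$, so $\mu_X$ is a multiple of Lebesgue measure and $X$ is trivial.

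For the sufficient direction, assume every vertex of $T$ has at most $q$ children. It is enough to build a non-trivial branching-type SSP on the homogeneous $q$-ary rooted tree $T_q$, since $T$ embeds as a subtree of $T_q$ preserving the partial order and non-comparability, and restriction preserves all three axioms of a branching-type SSP. For $q = 1$, $T_q \cong \N$ and any classical non-trivial SSP works. For $q \ge 2$, set $\alpha := q^{-1/2}$ and $\beta := \sqrt{(q-1)/q}$; take a unit-variance Gaussian $\xi$ together with, independently for each $\sigma \in T_q$, a centered Gaussian vector $(\omega_\sigma^{(1)}, \dots, \omega_\sigma^{(q)})$ with $\E|\omega_\sigma^{(i)}|^2 = 1$ and $\E \omega_\sigma^{(i)} \omega_\sigma^{(j)} = -1/(q-1)$ for $i \ne j$ (equivalently, the equicorrelated vector with $\sum_i \omega_\sigma^{(i)} = 0$). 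Define $X_e := \xi$ and, recursively, $X_\tau := \alpha X_\sigma + \beta \omega_\sigma^{(i)}$ whenever $\tau$ is the $i$-th child of $\sigma$.

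Induction along any ray gives $\|X_\sigma\|_2 = 1$ and $\Cov(X_\sigma, X_{\sigma'}) = \alpha^k$ for comparable $\sigma \pless \sigma'$ with $d = k$; the sequence $(\alpha^{|k|})_{k \in \Z}$ is the Fourier transform of the absolutely continuous measure on $\T$ with density $\frac{1 - \alpha^2}{|1 - \alpha e^{i\theta}|^2}$ (relative to $\frac{d\theta}{2\pi}$), which is then the common spectral measure. For non-comparable $\sigma, \tau$ with last common ancestor $\rho$ at distances $a := d(\rho, \sigma)$ and $b := d(\rho, \tau)$, unrolling the recursion from $\rho$ shows that the only overlapping noise in the two expansions is the pair $\omega_\rho^{(i_0)}, \omega_\rho^{(j_0)}$ for distinct children indices $i_0 \ne j_0$ (deeper noises live in disjoint subtrees of distinct children of $\rho$, hence are independent), so $\Cov(X_\sigma, X_\tau) = \alpha^{a+b-2}\bigl[\alpha^2 - \beta^2/(q-1)\bigr] = 0$ by our choice of parameters. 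Since $\gamma_1 = \alpha \ne 0$, the process is non-trivial. The one step with any conceptual content is guessing the correct innovation covariance; this is in fact forced by the Cauchy--Schwarz saturation of the first direction, which requires that $X_\sigma$ lie in the span of $\sum_i X_{\tau_i}$ over the children of $\sigma$ and hence dictates the equicorrelated structure with correlation $-1/(q-1)$.
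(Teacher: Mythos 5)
Your proof is correct. The necessary direction is essentially the paper's own argument dressed differently: where you apply Cauchy--Schwarz to $\langle X_\rho, \sum_i X_{\delta_i}\rangle$, the paper takes the Schur complement of the $(q+1)\times(q+1)$ Gram matrix of $(X_{\sigma_0},\dots,X_{\sigma_q})$; both yield exactly $|\widehat{\nu}(n)|\le\gamma_0/\sqrt{q}$ and then let $q\to\infty$, using unbounded valence together with the no-leaf assumption to produce $q$ pairwise non-comparable vertices at any fixed depth. No meaningful difference here.

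The sufficient direction shares the paper's reduction (embed $T$ as a rooted subtree of $T_q$ and restrict), but your construction of the non-trivial process on $T_q$ is genuinely new relative to what appears in the paper. The paper's route goes through Lemma~\ref{prop_Bozej} (positive definiteness of $\TT_{\beta_q}^{(\F_q^{+})}$), for which it offers three proofs: the Bo\.{z}ejko Markov-product theorem, a Gaussian forward-averaging process $X_\sigma^{(r)}=\sum_k r^k q^{-k/2}\sum_{|\tau|=k}G_{\sigma\tau}$ followed by a limit $r\to 1$, and the Cantor-measure embedding on $\partial T_q$. Your construction is instead a backward AR(1) recursion $X_\tau=q^{-1/2}X_\sigma+\sqrt{(q-1)/q}\,\omega_\sigma^{(i)}$ with equicorrelated innovations satisfying $\sum_i\omega_\sigma^{(i)}=0$; it realizes the kernel $q^{-d(\sigma,\delta)/2}\ch_{\text{comparable}}$ exactly, with no approximation or limit, and the vanishing of $\Cov(X_\sigma,X_\tau)$ for non-comparable pairs drops out as an exact algebraic cancellation $\alpha^2-\beta^2/(q-1)=0$. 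This amounts to a fourth proof of Lemma~\ref{prop_Bozej}, arguably the most hands-on of the four, and it makes visible \emph{why} the saturation value is $q^{-1/2}$: the innovation must be totally anticorrelated across siblings, forcing $\E\omega^{(i)}\omega^{(j)}=-1/(q-1)$. One remark: you should state explicitly that bounded degree implies bounded number of descendants (the root has $\deg(e)$ children; every other vertex has $\deg-1$), so that ``uniformly bounded valence'' does give the hypothesis ``at most $q$ children'' that your recursion needs, but this is a one-line observation and does not affect correctness.
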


The existence of non-trivial branching-type SSP on a rooted tree with uniformly bounded valence will be proved by embedding this tree into a rooted homogeneous one and we then use the following complete criterion (Theorem \ref{thm-hom-T}) for the spectral measure of a branching-type SSP on a  rooted homogeneous tree.

For any $q \ge 2$, let $T_q$ denote the set of vertices of the rooted $q$-homogeneous tree (see Figure 1 for the picture of the rooted $2$-homogeneous tree). We have the following necessary and sufficient condition for a measure on $\T$ to be the spectral measure of a branching-type SSP on $T_q$.

Recall that for any $r\in (0, 1)$,  the Poisson convolution $P_{r} * \mu$ of a positive Radon measure $\mu$ is defined by  
\[
(P_{r} * \mu)(e^{i\theta}): = \int_\T P_{r} (e^{i (\theta - \theta')}) d\mu(\theta'),
\]
where  $P_{r}: \T\rightarrow \R_{+}$ is the Poisson kernel at the point $r \in \D$ given by 
\begin{align}\label{def_Poisson}
P_{r} (e^{i \theta}) = \frac{1}{2 \pi} \sum_{n\in \Z}  r^{|n|} e^{i n \theta}=  \frac{1}{2 \pi} \frac{1 - r^2}{| 1 - r e^{i  \theta}|^2}.
\end{align}

\begin{theorem}[Rooted homogeneous trees]\label{thm-hom-T}
Let $q\ge 2$ be an integer.  A measure $\nu$ on $\T$ is the spectral measure of a branching-type SSP on $T_q$ if and only if there exists a positive Radon measure $\mu$ on $\T$ such that 
\[
\nu = P_{1/\sqrt{q}}* \mu. 
\] 
\end{theorem}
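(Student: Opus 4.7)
The plan is to prove both directions through the classical Herglotz-Bochner theorem, exploiting a ``branching-gain'' identity that converts the level-size factor $q^{|k|/2}$ on $T_q$ into $r^{-|k|}$ with $r=1/\sqrt q$. For the \emph{only if} direction, let $(X_\sigma)_{\sigma\in T_q}$ be a branching-type SSP with spectral measure $\nu$ and form the level-sums $S_n:=\sum_{d(\sigma,e)=n}X_\sigma$. Distinct vertices at the same level are non-comparable, hence uncorrelated by \eqref{def-uncor}, giving $\E|S_n|^2=q^n\widehat{\nu}(0)$; for $n\le m$, each level-$n$ vertex has exactly $q^{m-n}$ descendants at level $m$, each contributing $\widehat{\nu}(m-n)$, so $\E(S_n\overline{S_m})=q^m\widehat{\nu}(m-n)$. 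Setting $\widetilde S_n:=q^{-n/2}S_n$ and using $\max(n,m)-\tfrac{n+m}{2}=\tfrac{|m-n|}{2}$,
\[
\E(\widetilde S_n\overline{\widetilde S_m})=q^{|m-n|/2}\widehat{\nu}(m-n)=:\psi(m-n).
\]
The left-hand side, being a Gram matrix, is positive semi-definite, and $|\psi(k)|\le\psi(0)=\widehat{\nu}(0)<\infty$. By Herglotz-Bochner there is a positive Radon measure $\mu$ on $\T$ with $\widehat{\mu}=\psi$; this rewrites as $\widehat{\nu}(k)=r^{|k|}\widehat{\mu}(k)=\widehat{P_r\ast\mu}(k)$, forcing $\nu=P_r\ast\mu$.

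For the \emph{if} direction, given $\nu=P_r\ast\mu$, I first build the base case $\mu=\delta_0$, in which $\nu=P_r$ and $\widehat{\nu}(k)=r^{|k|}$. Let $X^{(0)}_e\sim\mathcal{N}(0,1)$ and inductively, at every vertex $v$ with children $w_1,\ldots,w_q$, define
\[
X^{(0)}_{w_j}:=r\,X^{(0)}_v+\xi_{w_j},\qquad j=1,\ldots,q,
\]
where $(\xi_{w_1},\ldots,\xi_{w_q})$ is a centred Gaussian vector independent of the previously built variables with covariance matrix $I_q-\tfrac{1}{q}\mathbf{1}\mathbf{1}^\top$. The identity $qr^2=1$ renders this matrix positive semi-definite of rank $q-1$, and direct computation gives $\E|X^{(0)}_{w_j}|^2=1$, $\E(X^{(0)}_{w_j}X^{(0)}_v)=r$, and $\E(X^{(0)}_{w_j}X^{(0)}_{w_{j'}})=r^2-\tfrac1q=0$ for $j\neq j'$; an induction on levels extends sibling-orthogonality to arbitrary cousins via their common ancestor, so $(X^{(0)}_\sigma)$ is a branching-type SSP with spectral measure $P_r$ (each ray restricting to a standard AR(1) process). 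For general $\mu$, take an independent classical SSP $(Y_n)_{n\in\Z}$ with spectral measure $\mu$ and set $X_\sigma:=X^{(0)}_\sigma\cdot Y_{d(\sigma,e)}$; independence factors the covariance as $\E(X^{(0)}_\sigma X^{(0)}_\tau)\cdot\widehat{\mu}(d(\tau)-d(\sigma))$, which equals $\widehat{\nu}(d(\tau)-d(\sigma))$ on comparable pairs and vanishes on non-comparable pairs because the $X^{(0)}$-factor does.

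The chief obstacle is recognising two points: the combinatorial identity $q^{\max(n,m)-(n+m)/2}=r^{-|m-n|}$ powering the ``only if'' direction, and the slightly counter-intuitive \emph{negative} covariance $-r^2$ of the children's innovations in the base case, which is precisely what cancels the parent-induced $+r^2$ correlation between siblings. Once both are in place, the theorem reduces to Herglotz-Bochner in one direction and the explicit $q$-ary Markov construction in the other.
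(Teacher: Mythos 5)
Your proof is correct. The \emph{only if} direction is essentially the paper's: your normalized level-sums $\widetilde S_n$ are precisely the paper's $\Theta_n$ of Lemma~\ref{lem_sp_av}, and the appeal to Herglotz--Bochner to produce $\mu$ is Lemma~\ref{lem_decay}. The \emph{if} direction takes a genuinely different route. The paper first proves that $\beta_q(n)=q^{-n/2}$ is $q$-HPD (Lemma~\ref{prop_Bozej}), via Bo\.zejko's Markov-product theorem, or via Gaussian averaging of i.i.d.\ variables over each subtree followed by a limit $r\to1$, or via cylinder-set indicators against the Cantor measure on $\partial T_q$; it then passes to general $\mu$ through the modulation argument of Lemma~\ref{lem_mod} and the closed-cone property. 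You instead build the base Gaussian process $X^{(0)}$ forward from the root as a $q$-ary Markov chain whose sibling innovations have covariance $I_q-\tfrac1q\mathbf 1\mathbf 1^{\top}$; the negative off-diagonal $-1/q$ exactly cancels the parent-induced $+r^2$ sibling correlation once $r=1/\sqrt q$, so the target kernel $r^{d(\sigma,\tau)}\mathds{1}(\sigma\sim\tau)$ is realized directly without any limiting passage, and you handle general $\mu$ by tensoring $X^{(0)}$ with an independent level-indexed SSP carrying spectral measure $\mu$ rather than by deterministic modulation. Your version is more self-contained (no Bo\.zejko, no $r\to1$ limit, no cone argument), at the modest cost of an induction propagating sibling-orthogonality to all non-comparable pairs through the common ancestor, which you correctly flag and which does go through.
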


Theorem \ref{thm-hom-T} has the following immediate corollary. 

\begin{corollary}\label{cor-spec-meas}
Let $q\ge 2$ be an integer.
The spectral measure of any branching-type SSP  on $T_q$ is absolutely continuous with respect to the Lebesgue measure on $\T$ and the corresponding Radon-Nikodym derivative is real-analytic.
\end{corollary}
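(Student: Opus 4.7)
The plan is to invoke Theorem \ref{thm-hom-T} directly and then verify that for any $r \in (0,1)$ the Poisson convolution $P_r * \mu$ of a finite positive Radon measure $\mu$ on $\T$ automatically produces a real-analytic density with respect to Lebesgue measure. This is a standard smoothing property of the Poisson kernel, but let me spell out the steps.

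First, by Theorem \ref{thm-hom-T}, any branching-type SSP spectral measure $\nu$ on $T_q$ can be written as $\nu = P_r * \mu$ with $r = 1/\sqrt{q} \in (0,1)$ and $\mu$ a finite positive Radon measure on $\T$. Using the Fourier series expansion of the Poisson kernel from \eqref{def_Poisson}, I would write the convolution pointwise as
\[
(P_r * \mu)(e^{i\theta}) = \frac{1}{2\pi} \sum_{n \in \Z} r^{|n|}\, \widehat{\mu}(n)\, e^{i n \theta},
\]
where $\widehat{\mu}(n) = \int_\T e^{-i n \theta'} d\mu(\theta')$. Since $\mu$ is a finite measure, $|\widehat{\mu}(n)| \le \mu(\T) < \infty$ uniformly in $n$, and therefore the $n$-th Fourier coefficient of $P_r * \mu$ is bounded by a constant times $r^{|n|}$.

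Second, I would use the fact that exponential decay of Fourier coefficients implies real-analyticity. Concretely, for any $\rho$ with $r < \rho < 1$, the series
\[
f(\theta) = \frac{1}{2\pi}\sum_{n\in\Z} r^{|n|}\, \widehat{\mu}(n)\, e^{i n \theta}
\]
converges absolutely and uniformly when $\theta$ is replaced by a complex variable $z = \theta + i t$ with $|t| < \log(\rho/r)$, because the general term is dominated by $\mu(\T)(r/\rho)^{|n|} \cdot \rho^{|n|}|e^{in z}|$... more simply, $|r^{|n|} e^{in(\theta + it)}| = r^{|n|} e^{-nt}$, which is summable when $|t| < -\log r$. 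Hence $f$ extends holomorphically to the open strip $|\mathrm{Im}\, z| < -\log r = \tfrac{1}{2}\log q$, and in particular $f$ is real-analytic on $\R$ (equivalently on $\T$).

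Finally, since $f$ is continuous and $\nu = f(\theta) \, d\theta / (\text{normalization})$ as measures on $\T$, the measure $\nu$ is absolutely continuous with respect to Lebesgue measure with the real-analytic density $f$. There is essentially no obstacle in this argument: the entire content of the corollary is packaged into Theorem \ref{thm-hom-T}, and what remains is the elementary observation that convolution against a Poisson kernel with parameter strictly less than $1$ is an infinitely smoothing operation whose output even extends holomorphically to an explicit annular neighborhood of $\T$.
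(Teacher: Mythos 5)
Your proof is correct and follows exactly the approach the paper intends: the paper states the corollary is an ``immediate'' consequence of Theorem \ref{thm-hom-T}, the content being precisely the elementary Poisson-smoothing observation you spelled out, namely that since $\widehat{P_{1/\sqrt q}*\mu}(n)=q^{-|n|/2}\widehat\mu(n)$ decays geometrically, the density extends holomorphically to the strip $|\mathrm{Im}\,z|<\tfrac12\log q$ and is in particular real-analytic on $\T$.
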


Corollary \ref{cor-spec-meas} has the following extension. Given a rooted tree $T$ and any integer $n\ge 1$, define 
\[
\Delta_n(T): = \sup_{\sigma \in T} \# \Big\{  \delta \in T \Big| \text{$\sigma \pless \delta$ and $d(\sigma, \delta) = n$} \Big\}.
\]
In other words, $\Delta_n(T)$ is the maximal number of $n$-th descendants for a vertex in $T$. 

\begin{proposition}\label{prop-ac}
Suppose that 
\begin{align}\label{inv-sum}
\sum_{n= 1}^\infty \frac{1}{\Delta_n(T)}<\infty.
\end{align}
Then the spectral measure of any branching-type SSP on $T$ is absolutely continuous with respect to the Lebesgue measure $m$ on $\T$ and the corresponding Radon-Nikodym derivative is in $L^2(\T, m)$. 
\end{proposition}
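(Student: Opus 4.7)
The plan is to control the Fourier coefficients of the spectral measure $\mu_X$ pointwise via a variance/covariance argument, and then invoke the standard fact that square-summability of Fourier coefficients of a measure on $\T$ forces it to be absolutely continuous with an $L^2$ density. Concretely, the goal is to establish the pointwise bound
\[
|\widehat{\mu_X}(n)|^2 \,\le\, \frac{\widehat{\mu_X}(0)^2}{\Delta_n(T)} \qquad (n \ge 1),
\]
from which \eqref{inv-sum} immediately yields $\sum_{n\in \Z} |\widehat{\mu_X}(n)|^2 < \infty$.

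The key estimate is a single application of Cauchy--Schwarz exploiting the orthogonality property \eqref{def-uncor}. I would fix $\sigma \in T$ and $n \ge 1$, set $D_n(\sigma) := \{\delta \in T : \sigma \pless \delta, \, d(\sigma,\delta)=n\}$, and write $N := |D_n(\sigma)|$. Two distinct elements $\delta, \delta' \in D_n(\sigma)$ have the same graph distance to the root, so they lie on distinct rooted geodesic rays and are non-comparable; by \eqref{def-uncor} the partial sum $S := \sum_{\delta \in D_n(\sigma)} X_\delta$ satisfies $\E|S|^2 = N \cdot \widehat{\mu_X}(0)$. On the other hand, each $\delta \in D_n(\sigma)$ lies on a rooted geodesic ray through $\sigma$ at graph distance exactly $n$ from $\sigma$, so by \eqref{def-spec} and \eqref{def-mu-X} we have $\Cov(X_\sigma, X_\delta) = \widehat{\mu_X}(n)$, and hence $\Cov(X_\sigma, S) = N \widehat{\mu_X}(n)$. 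The Cauchy--Schwarz inequality $|\Cov(X_\sigma, S)|^2 \le \E|X_\sigma|^2 \cdot \E|S|^2$ then gives $N^2 |\widehat{\mu_X}(n)|^2 \le \widehat{\mu_X}(0)^2 \cdot N$, i.e.\ $|\widehat{\mu_X}(n)|^2 \le \widehat{\mu_X}(0)^2 / N$. Taking the supremum over $\sigma \in T$ produces the displayed bound (with the convention $1/\infty = 0$).

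To conclude, I would sum over $n$: using $\widehat{\mu_X}(-n) = \overline{\widehat{\mu_X}(n)}$, the bound above, and the hypothesis \eqref{inv-sum}, the sequence $(\widehat{\mu_X}(n))_{n\in\Z}$ lies in $\ell^2(\Z)$. The Fourier series therefore converges in $L^2(\T, m)$ to some $f \in L^2(\T, m)$, and the measure $f\, dm$ has the same Fourier coefficients as $\mu_X$, forcing $\mu_X = f\, dm$ by uniqueness. I do not anticipate a serious obstacle; the only small point requiring care is the verification that two distinct level-$n$ descendants of a common ancestor are non-comparable under $\pless$, which is immediate from the definition of comparability as lying on a common rooted geodesic ray.
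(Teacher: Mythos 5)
Your proposal is correct and follows essentially the same path as the paper: both reduce the claim to the pointwise bound $|\widehat{\mu_X}(n)|^{2}\le \widehat{\mu_X}(0)^{2}/\Delta_n(T)$ obtained from the positive semi-definiteness of the covariance of $X_{\sigma_0}$ together with its $\Delta_n(T)$ mutually non-comparable level-$n$ descendants, and then conclude via square-summability of Fourier coefficients. The only cosmetic difference is that you extract the bound by a single Cauchy--Schwarz against the symmetrized sum $S=\sum_{\delta\in D_n(\sigma)}X_\delta$, whereas the paper recycles the matrix-congruence (Schur complement) computation from the proof of Theorem \ref{thm-general-T}; these are two phrasings of the same inequality.
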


\subsection{Branching-type SSP as extension of one-dimensional SSP}\label{sec-extension}
One can look at the branching-type SSP from another point of view which we  now describe. 

Given two index sets $I, J$ with $I \subset J$.  We say a stochastic process $(X_i)_{i\in I}$ is the restriction of (or can be embedded into) another stochastic process $(Y_j)_{j\in J}$ if 
\[
X_i = Y_i, \quad \forall i \in I.
\]
If a stochastic process $(X_i)_{i\in I}$ can be embedded into another stochastic process $(Y_j)_{j\in J}$,  we will also say that $(Y_j)_{j\in J}$ is an extension of $(X_i)_{i\in I}$. 

The study of branching-type SSP is then related to the problem of embedding a classical SSP into a stochastic process with a particular structure on a tree. More precisely, we can formulate the following problem.

\begin{problem}[Embedding of SSP]\label{pb-SSP}
 Consider a classical weak stationary stochastic process $(X_n)_{n\in \N}$ on $\N$ and a rooted tree $T$ without leaves. Choose any rooted geodesic ray $\xi \in \partial T$. By writing $\xi  = (\sigma_n)_{n\in \N}$, we define a stochastic process $X^\xi$ indexed by the vertices in $\xi$ by 
\[
X^\xi_{\sigma_n}: = X_n, \quad \forall n \in \N.
\]
Under which condition  on the spectral measure of $(X_n)_{n\in \N}$ and the rooted tree $T$ can the process $X^\xi$ be embedded into a branching-type SSP on $T$ ? 
\end{problem}

For the rooted homogeneous tree, Theorem \ref{thm-hom-T} gives the complete solution of Problem \ref{pb-SSP}. For other rooted tree, Problem \ref{pb-SSP} could be very complicated. Theorem \ref{thm-2-1} below shows that even for very simple trees, Problem \ref{pb-SSP} may have a non-trivial solution.

For any positive integer $q\ge 2$, let $T(q; 1)$ denote the  rooted tree such that the root vertex has exactly $q$-descendants and all the other vertices have exactly $1$-descendant. 

\begin{center}
\begin{tikzpicture}
\filldraw(-12,10) circle (2pt);
\node (e) at (-12-0.2,10+0.1) {$e$} ;
\filldraw(-11.65,9) circle (2pt);
\filldraw(-12.35,9) circle (2pt);
\draw (-12,10)--(-11.65,9);
\draw (-12,10)--(-12.35,9);
\filldraw(-12.65,8) circle (2pt);
\draw (-11.65,9)--(-11.35,8);
\draw (-12.35,9)--(-12.65,8);
\filldraw(-11.35,8) circle (2pt);
\draw (-11.35,8)--(-11.2,7.5);
\draw(-12.65,8)--(-12.8,7.5);
\draw[dashed](-11.2,7.5)--(-11,6.8);
\draw[dashed](-12.8,7.5)--(-13,6.8);
\node at (-12,6) {Figure 2: $T(2; 1)$};
\end{tikzpicture}
\end{center}
\begin{remark}
Note that although we can identify naturally the set $T(2; 1)$ with the set $\Z$,  the partial order on $T(2; 1)$ is different from the usual order on $\Z$.   
\end{remark}

\begin{theorem}\label{thm-2-1}
For $q \ge 2$, a positive Radon measure $\mu$ is the spectral measure  of a branching-type SSP on $T(q; 1)$ if and only if 
\begin{align}\label{T-q-1}
\exp \left(\int_\T  \log  \Big( \frac{d\mu_{ac}}{dm}\Big)  dm\right) \ge \Big(1 - \frac{1}{q}\Big)\mu(\T),
\end{align}
where $\mu_{ac}$ is the absolutely continuous part of $\mu$ with respect to the normalized Haar measure $dm$ on $\T$.  
\end{theorem}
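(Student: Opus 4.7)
The plan is to recognize the existence of a branching-type SSP on $T(q;1)$ with spectral measure $\mu$ as the positive semidefiniteness of an explicit Gram kernel, exploit a block structure of the form $I_q\otimes(\cdot)-J_q\otimes(\cdot)$ to isolate a rank-one perturbation inequality, and then identify that inequality with a classical Szegő-type extremal problem.

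Label the $q$ rays issuing from the root $e$ as $\xi_1,\dots,\xi_q$; they share only $e$. Write $X_n^{(i)}$ for the random variable at the $n$-th vertex of $\xi_i$, so that $X_0^{(i)}=X_e$ for every $i$. The three axioms in the definition prescribe every pairwise covariance from $\mu$ alone: $\Cov(X_n^{(i)},X_m^{(j)})=\delta_{ij}\widehat\mu(m-n)$ for $n,m\ge 1$, $\Cov(X_n^{(i)},X_e)=\overline{\widehat\mu(n)}$, and $\Var(X_e)=\mu(\T)$. Conversely, any positive-semidefinite kernel with these entries is realised by a (Gaussian) process that automatically satisfies the three axioms: the covariance along each ray equals $\widehat\mu(m-n)$, so Herglotz–Bochner delivers $\mu$ as its spectral measure. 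Hence the theorem reduces to determining for which $\mu$ the prescribed kernel is PSD.

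Taking the Schur complement with respect to the $X_e$ block (legitimate since $\mu(\T)>0$) produces the infinite matrix
\[
\tilde G_{(i,n),(j,m)}\ =\ \delta_{ij}\widehat\mu(m-n)\ -\ \frac{\overline{\widehat\mu(n)}\,\widehat\mu(m)}{\mu(\T)}\ =\ \bigl(I_q\otimes M\ -\ J_q\otimes ww^*\bigr)_{(i,n),(j,m)},
\]
where $M_{nm}=\widehat\mu(m-n)$ is the Gram matrix of $\{e^{in\theta}\}_{n\ge 1}$ in $L^2(\T,\mu)$, $w_n=\overline{\widehat\mu(n)}/\sqrt{\mu(\T)}$, and $J_q$ is the all-ones $q\times q$ matrix. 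Since $J_q$ has eigenvalues $q$ (on the diagonal vector) and $0$ (on its $(q-1)$-dimensional orthogonal complement), diagonalising in the $q$-index splits $\tilde G\ge 0$ into the pair $M\ge 0$ (automatic) and $M-q\,ww^*\ge 0$.

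Unpacked in variational form, the second condition reads
\[
\|p\|_{L^2(\mu)}^2\ \ge\ \frac{q}{\mu(\T)}\Bigl|\int_\T p\,d\mu\Bigr|^2\qquad\text{for every }p\in H:=\overline{\spann}\bigl\{e^{in\theta}:n\ge 1\bigr\}\subset L^2(\T,\mu).
\]
Maximising $|\langle p,1\rangle_{L^2(\mu)}|^2/\|p\|_{L^2(\mu)}^2$ over $p\in H$ yields $\|P_H 1\|_{L^2(\mu)}^2=\mu(\T)-\mathrm{dist}(1,H)^2$, so the inequality rearranges to $\mathrm{dist}(1,H)^2\ge \mu(\T)(1-1/q)$. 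The classical Szegő–Kolmogorov theorem evaluates the left-hand side:
\[
\mathrm{dist}(1,H)^2\ =\ \inf\Bigl\{\int_\T|P|^2\,d\mu:\ P\text{ polynomial},\ P(0)=1\Bigr\}\ =\ \exp\Bigl(\int_\T\log\tfrac{d\mu_{ac}}{dm}\,dm\Bigr),
\]
and substitution gives exactly \eqref{T-q-1}.

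The argument is short and the only delicate points are purely technical: the Schur complement, the spectral decomposition of $I_q\otimes M-J_q\otimes ww^*$, and the passage from polynomials to the $L^2(\mu)$-closure $H$ in the Szegő step all have to be handled in the infinite-dimensional setting, but each is standard. The real content is the clean rank-one decoupling that isolates Szegő's infimum and thereby makes the exponent $(1-1/q)$ appear from the spectrum of $J_q$.
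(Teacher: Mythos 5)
Your argument is correct and follows essentially the same route as the paper: reduce to positive semidefiniteness of the prescribed Gram kernel, take a Schur complement at the root vertex (the paper's Lemma~3.1 is exactly this), and finish with Szeg\H{o}'s first theorem. The only presentational difference is that you explicitly diagonalize $J_q$ (eigenvalues $q$ and $0$) to decouple the single block condition $M-q\,ww^*\ge 0$, whereas the paper keeps the $q$-fold block structure and evaluates the resulting distance in the vector-valued space $L^2(\T,\mu;\C^q)$ using the mutual orthogonality of the $q$ summands; these two manipulations are equivalent ways of extracting the same factor $q$.
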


The condition \eqref{T-q-1} implies in particular that 
$\frac{d\mu_{ac}}{dm}(\theta) > 0, a.e.$
and thus any spectral measure of a branching-type SSP on $T(q; 1)$ must have full support on $\T$. For the absolutely continuous positive Radon measures on $\T$ with density function $\varphi \ge 0$, the condition \eqref{T-q-1} reads as 
\begin{align}\label{T-q-1-ab}
\exp \left(\int_\T  \log   \varphi \right) \ge \Big(1 - \frac{1}{q}\Big)\int_\T\varphi,
\end{align}
which can be viewed as an inverse Jensen-inequality with a multiplicative constant (here and after, the notation for the integral $\int_\T \varphi dm$ is simplified as $\int_\T \varphi$). By an elementary computation, if $\varphi = a \mathds{1}_{A} + b \mathds{1}_{\T\setminus A}$ with $m(A)= 1/2$ and $a> 0, b> 0$, then  $\varphi$ satisfies the condition \eqref{T-q-1-ab} if and only if 
\[
 \frac{q-1}{q + \sqrt{2q-1}} \le \frac{a}{b} \le  \frac{q + \sqrt{2q-1}}{q-1}.
\]
One can easily check that if $g: \T\rightarrow \R$ satisfies 
\[
\| g\|_\infty \le \frac{1}{2}\log\Big(\frac{q}{q-1}\Big),
\]
then the function $\varphi = e^g$ satisfies the condition \eqref{T-q-1-ab}. 

\begin{remark}
Theorem \ref{thm-2-1} implies that if an integrable function  $\varphi\ge  0$ satisfies the  strict inequality
\[
\exp \left(\int_\T  \log  \varphi \right) >  \Big(1 - \frac{1}{q}\Big)\int_\T \varphi,
\]
then for any singular  positive Radon measure $\mu_s$ on $\T$ such that 
\[
\mu_s(\T) \le \frac{\exp \left(\int_\T  \log  \varphi \right) }{1- 1/q} - \int_\T \varphi,
\]
the measure 
$
\mu: = \varphi dm + \mu_s
$ 
is the spectral measure of a branching-type SSP on $T(q; 1)$. That is, in contrary to the case of the rooted $q$-homogeneous trees $T_q$,  the spectral measure of a branching-type SSP on $T(q; 1)$ could have a non-trivial singular part. 
\end{remark}

\begin{remark}
For any integer $q \ge 2$,   since $T(q;1)$ is a sub-rooted-tree of  the rooted $q$-homogeneous tree $T_q$, Theorem \ref{thm-hom-T} and Theorem  \ref{thm-2-1} implies in particular that  for any probability measure $\mu$ on $\T$, we have
\[
\int_\T \log  (P_{1/\sqrt{q}} * \mu) \ge  \log (1 - 1/q). 
\]
The above inequality can also be proved directly (we omit the details) and has the following generalization: for any probability measure $\mu$ on $\T$ and any $r\in (0, 1)$, 
\[
\int_\T \log  (P_{r} * \mu) \ge  \log (1 - r^2). 
\]
\end{remark}

\bigskip

For strong stationary stochastic processes (abbr. SSSP), one can study similar embedding problem. Therefore, we give the following definition.

\begin{definition}[Branching-type SSSP]
A stochastic process $(X_\sigma)_{\sigma \in T}$ on $T$ is called a branching-type SSSP if 
\begin{itemize}
\item Restricted on every rooted geodesic ray $\xi \in \partial T$, we have a classical strong stationary  stochastic process $X^\xi: = (X_\sigma)_{\sigma \in \xi}$, by identifying canonically the subset $\xi\subset T$ with the set $\N$.
\item The family of these strong stationary stochastic processes $X^\xi, \xi \in \partial T$ share a common distribution. That is, for any pair $(\xi, \xi')$ of rooted geodesic rays, by using the natural identifications $\xi \simeq \N \simeq \xi'$, we have 
\[
(X_\sigma)_{\sigma \in \xi} \stackrel{d}{=}  (X_{\sigma'})_{\sigma'\in \xi'}.
\]
\item For any pair of non-comparable vertices $\sigma, \tau \in T$, the random variables $X_\sigma, X_\tau$ are independent. 
\end{itemize}
\end{definition}

\begin{remark}
Note that in the definition of branching-type SSSP, we do not require the integrability of the processes. Note also that the joint distribution of a branching-type SSSP in general can not be determined by the common distribution of this process restricted on a rooted geodesic ray. 
\end{remark}

\begin{problem}[Embedding of SSSP]\label{pb-SSSP}
 Consider a classical strong stationary stochastic process $(X_n)_{n\in \N}$ on $\N$ and a rooted tree $T$. Choose any rooted geodesic ray $\xi \in \partial T$. By writing $\xi  = (\sigma_n)_{n\in \N}$, we define a stochastic process $X^\xi$ indexed by $\xi$ by 
\[
X^\xi_{\sigma_n}: = X_n, \quad \forall n \in \N.
\]
Under which condition  on the distribution of $(X_n)_{n\in \N}$ and the rooted tree $T$ can the process $X^\xi$ be embedded into a branching-type SSSP on $T$ ? 
\end{problem}

Clearly, for centered Gaussian SSP, Problem \ref{pb-SSSP} is reduced to Problem \ref{pb-SSP}.  Assume that $(X_\sigma)_{\sigma\in T}$ is a branching-type SSSP on $T$ which is a Gaussian process. Then for any measurable function $F: \C\rightarrow \C$, the stochastic process $(F(X_\sigma))_{\sigma \in T}$ is a branching-type SSSP. Except for this simple construction from Gaussian processes, in general,  it is not obvious how to construct a natural non-trivial branching-type SSSP. In a forthcoming paper, we will construct a natural family of determinantal point processes on certain rooted trees which are braching-type SSSP.

\subsection{Hyper-positive definite functions: outline of the proof of Theorem \ref{thm-hom-T}}

It is convenient for us to identity $T_q$ with the unital free semi-group $\F_q^{+}$ on generators $s_1,\cdots, s_q$. The neutral element of $\F_q^{+}$ will be denoted $e$. For any element $\sigma = s_{i_1}s_{i_2} \cdots s_{i_n} \in \F_q^{+}$, we set $|\sigma| = n$  and by convention, we set $|e| =0$.  The natural partial order on $T_q$ is then the same as the partial order $\preceq$ on $\F_q^{+}$ described as follows: for any $\sigma, \delta \in \F_q^{+}$, 
\[
\sigma \preceq \delta \Longleftrightarrow \text{$\delta = \sigma w$ for some $w\in \F_q^{+}$}.
\]

Our proof of Theorem \ref{thm-hom-T} relies on a variant of Herglotz-Bochner Theorem. First recall the classical Herglotz-Bochner Theorem.   For any set $\Sigma$, we say that a kernel $K: \Sigma \times \Sigma \rightarrow \C$ is positive definite if for each $k\in \N$, each choice of elements $\sigma_1, \cdots, \sigma_k \in \Sigma$, the matrix $[ K(\sigma_i, \sigma_j)]_{1\le i, j \le k}$ is non-negative definite.  A function $\alpha: \N \rightarrow \C$ is called positive definite, if  the associated {\it Toeplitz kernel} $\TT_\alpha^{(\N)}: \N\times \N \rightarrow \C$ is positive definite, where $\TT_\alpha^{(\N)}$ is the kernel defined by 
\begin{align}\label{def-K-N}
\TT_\alpha^{(\N)}(k, \ell) : = \alpha(k-\ell), \quad k, \ell \in \N, 
\end{align}
where we use the following convention:
\[
\alpha(-n):=\overline{\alpha(n)}, \quad \text{for $n\ge 1$}. 
\]
The classical Herglotz-Bochner Theorem (cf.  Bochner \cite[Theorem 3.2.3]{Bochner1955}) says that any positive definite  function on $\N$ is the restriction of the Fourier series on $\N$ of a unique positive Radon measure on the circle.

 Note that a function $\alpha:\N\rightarrow \C$ is positive definite if and only if there exists a SSP $X= (X_n)_{n\in \N}$ on $\N$ with spectral measure $\nu_X$ satisfying 
\[
\alpha(k)  = \widehat{\nu_X} (k), \quad \forall k \in \N.
\]

\begin{definition}\label{def-HPD}
Let $q\ge 2$ be an integer. A function $\alpha: \N\rightarrow \C$ is called $q$-hyper-positive definite (abbr. $q$-HPD) if there exists a branching-type SSP $X = (X_\sigma)_{\sigma\in T_q}$ on $T_q$ with spectral measure $\mu_X$ defined in \eqref{def-mu-X} such that 
\[
\alpha(k) = \widehat{\mu_X}(k), \quad k \in \N.
\]
\end{definition}

We shall use the equivalent definition of $q$-HPD functions in Lemma \ref{lem-HPD} below.


Given any function $\alpha: \N \rightarrow \C$, we define a  {\it branching-Toeplitz kernel} $\TT_\alpha^{(\F_q^{+})}$ on $\F_q^{+}$ by
\begin{align}\label{def_kernel_a}
\TT_\alpha^{(\F_q^{+})}(\sigma, \delta)  := \left\{
\begin{array}{ll}
\alpha(|w|); & \text{if $\delta = \sigma w$ for some $w \in \F_q^{+}$} \vspace{2mm}
\\
\overline{\alpha(|w|)}; & \text{if $\sigma = \delta w$ for some $w \in \F_q^{+}$} \vspace{2mm}
\\
0; & \text{otherwise}
\end{array}
\right.
\end{align}

\begin{remark} 
Our definition \eqref{def_kernel_a} of Toeplitz type kernel is slightly different from Popescu \cite{Popescu-96} in that we do not require the condition $K(e, e) = 1$ and  different from Popescu \cite{Popescu-99} in that we add the additional requirement $K(\sigma, \delta) = 0$ for non-comparable pair  $(\sigma, \delta)$. 
\end{remark}

Clearly, by the definition of branching-type SSP, we have

\begin{lemma}\label{lem-HPD}
Let $q\ge 2$ be an integer.  A function $\alpha: \N \rightarrow \C$ is $q$-HPD if and only if  the branching-Toeplitz  kernel $\TT_\alpha^{(\F_q^{+})}$ on $\F_q^{+}$ defined in \eqref{def_kernel_a} is positive definite, or equivalently, there exists a branching-type SSP $(X_\sigma)_{\sigma\in \F_q^{+}}$ on $\F_q^{+}$  with
\begin{align}\label{GF_K}
\Cov(X_\sigma, X_\delta) = \E(X_\sigma \overline{X_\delta}) = \TT_\alpha^{(\F_q^{+})}(\sigma, \delta),\quad \sigma, \delta \in \F_q^{+}.  
\end{align}
\end{lemma}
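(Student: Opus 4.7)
The plan is to prove the chain of equivalences
\[
\text{$\alpha$ is $q$-HPD} \Longleftrightarrow \TT_\alpha^{(\F_q^{+})} \text{ is positive definite} \Longleftrightarrow \text{existence of a branching-type SSP with covariance }\TT_\alpha^{(\F_q^{+})}
\]
by unwinding the three bullet conditions in the definition of a branching-type SSP and matching them against the three cases of the kernel \eqref{def_kernel_a}.

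First I would dispose of the equivalence between positive definiteness of $\TT_\alpha^{(\F_q^{+})}$ and the existence of a process $(X_\sigma)_{\sigma\in \F_q^{+}}$ satisfying \eqref{GF_K}. One direction is immediate: the covariance of any square-integrable process is positive definite. For the other direction, given that $\TT_\alpha^{(\F_q^{+})}$ is positive definite, one builds the centered Gaussian process on $\F_q^{+}$ indexed by vertices with this covariance (Kolmogorov's extension), which automatically satisfies \eqref{GF_K}.

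Next I would show that such a process $(X_\sigma)$ is automatically a branching-type SSP and that its spectral measure $\mu_X$ satisfies $\widehat{\mu_X}(k) = \alpha(k)$. Fix a rooted geodesic ray $\xi = (\sigma_n)_{n\in \N}$ with $|\sigma_n| = n$. For any $n, k \in \N$ we have $\sigma_{n+k} = \sigma_n w$ for some $w$ with $|w| = k$, so by the first case of \eqref{def_kernel_a},
\[
\Cov(X_{\sigma_n}, X_{\sigma_{n+k}}) = \TT_\alpha^{(\F_q^{+})}(\sigma_n, \sigma_{n+k}) = \alpha(k),
\]
which is independent of $n$ and, crucially, independent of the chosen ray $\xi$. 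This verifies the first two bullets in the definition of branching-type SSP and identifies the common spectral measure as the Herglotz-Bochner measure of the positive definite function $\alpha: \N \to \C$. The third bullet then follows directly from the third case of \eqref{def_kernel_a}: non-comparable vertices give $\TT_\alpha^{(\F_q^{+})}(\sigma, \tau) = 0$, hence uncorrelated random variables. In particular, $\widehat{\mu_X}(k) = \alpha(k)$ for all $k \in \N$, so $\alpha$ is $q$-HPD.

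For the reverse direction, suppose $\alpha$ is $q$-HPD, witnessed by a branching-type SSP $X = (X_\sigma)_{\sigma \in \F_q^{+}}$ with $\widehat{\mu_X}(k) = \alpha(k)$. I would simply check that $\Cov(X_\sigma, X_\delta)$ agrees with \eqref{def_kernel_a} in each of the three cases: if $\delta = \sigma w$, then $\sigma, \delta$ lie on a common rooted geodesic ray and the stationarity of $X$ restricted to this ray together with \eqref{def-spec} gives $\Cov(X_\sigma, X_\delta) = \widehat{\mu_X}(|w|) = \alpha(|w|)$; the symmetric case $\sigma = \delta w$ follows from Hermitian symmetry of covariance together with the convention $\alpha(-n) = \overline{\alpha(n)}$; and the non-comparable case is exactly \eqref{def-uncor}. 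Since the covariance of $X$ is necessarily positive definite as a kernel on $\F_q^{+}$, we conclude that $\TT_\alpha^{(\F_q^{+})}$ is positive definite.

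The argument is essentially bookkeeping; no serious obstacle is anticipated. The only point requiring care is to match sign conventions between \eqref{def-spec} (which uses $\Cov(X_n, X_{n+k}) = \widehat{\nu_X}(k)$) and \eqref{def_kernel_a} (where the first case corresponds to $\sigma$ being the earlier vertex along the ray), so that the Hermitian symmetry of covariance lines up with the interchange of the first two cases of \eqref{def_kernel_a} via $\alpha(-n) = \overline{\alpha(n)}$.
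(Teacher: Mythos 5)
Your proposal is correct and, modulo one minor caveat, does exactly what the paper treats as definitional unwinding (the paper's own ``proof'' is the single word ``Clearly''): you realize the positive definite kernel as a covariance via a centered process, observe that the three cases of \eqref{def_kernel_a} are precisely the three bullet conditions in the definition of a branching-type SSP, and run the correspondence in both directions, noting the convention-matching between \eqref{def-spec} and \eqref{def-K-N}. The only wording to tighten is the Gaussian construction: when $\alpha$ is complex-valued the covariance kernel is Hermitian rather than real symmetric, so either invoke circularly symmetric complex Gaussian variables or, more robustly, realize the positive definite kernel as inner products $v_\sigma \mapsto X_\sigma$ of vectors in a Hilbert space isometrically embedded in $L^2$ of a probability space; this is standard and does not change the structure of your argument.
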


In general,  it is difficult to determine whether a given branching-Toeplitz kernel on $\F_q^{+}$ is positive definite or not. Even the existence of non-identically zero $q$-HPD functions is not immediately obvious. The key example of non-trivial $q$-HPD function $\beta_q:\N\rightarrow \R$ is provided in the following
\begin{lemma}\label{prop_Bozej}
Let $q\ge 2$ be an integer and define 
\begin{align}\label{def_beta_q}
\beta_q(n) = q^{-n/2},\quad n \in \N.
\end{align}
Then the function $\beta_q$  is $q$-HPD.  
\end{lemma}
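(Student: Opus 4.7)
The plan is to exhibit an explicit branching-type SSP $(X_\sigma)_{\sigma\in\F_q^{+}}$ whose covariance matches the branching-Toeplitz kernel of $\beta_q$; by Lemma~\ref{lem-HPD} this suffices, since along any rooted geodesic ray such a process will then have covariance $q^{-k/2}$ at lag $k$, forcing $\widehat{\mu_X}(k)=q^{-k/2}=\beta_q(k)$. The key observation is that the kernel $\TT_{\beta_q}^{(\F_q^{+})}$, equal to $q^{-d(\sigma,\delta)/2}$ on comparable pairs and $0$ on non-comparable pairs, is naturally realized as the Gram matrix of $L^2$-normalized indicators of boundary cylinders of $T_q$.

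Concretely, I would take $\Omega:=\{1,\ldots,q\}^\N$, the Bernoulli boundary of $T_q$, endowed with the uniform product probability measure $\pi$, so that each cylinder $\Omega_\sigma:=\{\omega\in\Omega:\sigma\preceq\omega\}$ has mass $\pi(\Omega_\sigma)=q^{-|\sigma|}$. Setting $f_\sigma:=q^{|\sigma|/2}\ch_{\Omega_\sigma}\in L^2(\Omega,\pi)$ gives unit vectors that are orthogonal for non-comparable $\sigma,\delta$ (since then $\Omega_\sigma\cap\Omega_\delta=\emptyset$), and a direct computation for $\sigma\preceq\delta$ yields
\[
\langle f_\sigma,f_\delta\rangle_{L^2(\pi)}=q^{(|\sigma|+|\delta|)/2}\,\pi(\Omega_\delta)=q^{-(|\delta|-|\sigma|)/2}=q^{-d(\sigma,\delta)/2}.
\]
Hence the Gram matrix of $\{f_\sigma\}_{\sigma\in\F_q^{+}}$ coincides with $\TT_{\beta_q}^{(\F_q^{+})}$. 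Letting $W:L^2(\Omega,\pi)\to L^2(\Xi,\PP)$ be a real isonormal Gaussian process and setting $X_\sigma:=W(f_\sigma)$ produces a centered Gaussian family with the required covariance, which is the desired branching-type SSP witnessing that $\beta_q$ is $q$-HPD.

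Once the right Hilbert space is identified, no step is substantial. I would expect the main (mild) obstacle to be motivating the choice of realizing the $X_\sigma$ on the boundary: a naive recursive attempt, defining $X_{\tau s_i}:=q^{-1/2}X_\tau+\sqrt{1-1/q}\,\zeta_{\tau,i}$ with i.i.d.\ noise $\zeta_{\tau,i}$, fails because it forces sibling covariance to be $1/q\neq 0$. The boundary-$L^2$ model succeeds precisely because the geometric decay $\pi(\Omega_\sigma)=q^{-|\sigma|}$ matches the Poisson factor $q^{-1/2}$, and the disjointness of cylinders indexed by non-comparable vertices encodes the required orthogonality automatically.
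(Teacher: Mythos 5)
Your construction is correct and coincides with the paper's third proof of this lemma: realizing $\TT_{\beta_q}^{(\F_q^{+})}$ as the Gram matrix of the $L^2$-normalized cylinder indicators $f_\sigma=q^{|\sigma|/2}\ch_{\Omega_\sigma}$ on the boundary $\partial T_q\cong\{1,\dots,q\}^\N$ with the uniform product (Cantor) measure. The paper also gives two other proofs (via Bo\.{z}ejko's Markov product of positive definite kernels, and via an explicit Gaussian averaging process on $\F_q^{+}$), but the argument you chose is the same as its third.
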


In \S \ref{sec_strongpdf} below, we will present three different proofs of Lemma \ref{prop_Bozej}, all of which have their own interests: 1) the first proof is based on a beautiful result of Bo\.{z}ejko \cite{Bozejko-89} on Markov product of positive definite kernels on union of two sets (see also \cite{Q-extension} for an alternative proof of this result); 2) the second probabilistic proof is based on explicit constructions of Gaussian processes indexed by $\F_q^{+}$ and on the self-similar structure of the Cayley graph of $\F_q^{+}$; 3) the third proof is direct and uses the natural Cantor measure on the boundary $\partial T_q$ of the rooted tree $T_q$.

All $q$-HPD functions are given by the following

\begin{theorem}[Herglotz-Bochner Theorem for $q$-HPD functions]\label{thm_classification}
A function $\alpha: \N \rightarrow \C$ is $q$-HPD if and only if the function 
\[
\N\ni n \mapsto q^{n/2} \alpha(n)\in \C
\]
 is positive definite on $\N$. Therefore, $\alpha$ is $q$-HPD if and only if there exists a positive Radon measure $\nu$ on  $\T$ such that 
\[
\alpha(n ) = q^{-n/2} \widehat{\nu}(n) =  q^{-n/2} \cdot\int_\T  e^{-i  n \theta} d \nu(\theta), \quad n \in \N.
\]
\end{theorem}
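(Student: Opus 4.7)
The plan is to prove the two implications of Theorem~\ref{thm_classification} separately, leaning on the key input from Lemma~\ref{prop_Bozej} together with the Schur product theorem for the sufficiency direction, and a spatial averaging trick on the rooted homogeneous tree for the necessity direction. The second displayed equivalence (with the spectral measure $\nu$) is then immediate from the classical Herglotz--Bochner theorem applied to $\gamma(n):=q^{n/2}\alpha(n)$.

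For the \emph{sufficiency} direction, suppose $\gamma(n) = q^{n/2}\alpha(n)$ is classically positive definite on $\N$, so the Toeplitz kernel $\TT_\gamma^{(\N)}$ is positive definite. Consider the length map $|\cdot|: \F_q^{+}\rightarrow \N$ and form the kernel
\[
\widetilde{K}(\sigma, \delta) := \gamma(|\delta|-|\sigma|), \qquad \sigma, \delta \in \F_q^{+},
\]
which is positive definite on $\F_q^{+}$, being (up to complex conjugation) the pullback of $\TT_\gamma^{(\N)}$ through the length map. A direct verification in the three cases of the definition \eqref{def_kernel_a}, exploiting the clean cancellation $q^{-|w|/2}\cdot q^{|w|/2}=1$, then yields the pointwise identity
\[
\TT_\alpha^{(\F_q^{+})}(\sigma,\delta) \;=\; \TT_{\beta_q}^{(\F_q^{+})}(\sigma,\delta)\cdot \widetilde{K}(\sigma,\delta).
\]
Since $\TT_{\beta_q}^{(\F_q^{+})}$ is positive definite by Lemma~\ref{prop_Bozej}, the Schur product theorem delivers the positive definiteness of $\TT_\alpha^{(\F_q^{+})}$, and hence $\alpha$ is $q$-HPD by Lemma~\ref{lem-HPD}.

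For the \emph{necessity} direction, I would start from a branching-type SSP $(X_\sigma)_{\sigma\in \F_q^{+}}$ on $T_q$ realizing $\alpha$, as guaranteed by Lemma~\ref{lem-HPD}, and squeeze out a classical SSP on $\N$ by taking level averages
\[
Y_k := q^{-k/2}\sum_{|\sigma|=k} X_\sigma, \qquad k\in \N.
\]
The normalization $q^{-k/2}$ is tuned to the number $q^k$ of vertices at level $k$. The key observation is that $\E(X_\sigma\overline{X_\delta})=\TT_\alpha^{(\F_q^{+})}(\sigma,\delta)$ vanishes on non-comparable pairs, so for $k\le \ell$ the only non-zero contributions to $\E(Y_k\overline{Y_\ell})$ come from pairs $(\sigma,\delta)$ with $\sigma$ at level $k$ and $\delta$ one of the $q^{\ell-k}$ descendants of $\sigma$ at level $\ell$; each such pair contributes $\alpha(\ell-k)$. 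A short count gives
\[
\E(Y_k\overline{Y_\ell}) \;=\; q^{(\ell-k)/2}\alpha(\ell-k) \;=\; \gamma(\ell-k), \qquad 0\le k \le \ell,
\]
and the automatic positive semidefiniteness of the Gram matrices of $(Y_k)$ then forces $\gamma$ to be positive definite on $\N$.

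The main obstacle I anticipate is the careful bookkeeping in the Schur product identity, in particular handling the conjugation convention $\alpha(-n)=\overline{\alpha(n)}$ correctly across the three cases of \eqref{def_kernel_a}; once this is dispatched cleanly, both directions reduce to classical tools (the Schur product theorem and the classical Herglotz--Bochner theorem) combined with the single non-trivial input from Lemma~\ref{prop_Bozej}.
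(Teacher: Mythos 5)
Your proposal is correct, and while the necessity half is essentially the paper's own argument (your $Y_k$ is exactly the level-averaged process $\Theta_n$ of Lemma~\ref{lem_sp_av}, and the paper packages the conclusion as Lemma~\ref{lem_decay}), the sufficiency half goes by a genuinely different route. The paper reduces sufficiency to two ingredients: Lemma~\ref{prop_Bozej} (the kernel $\TT_{\beta_q}^{(\F_q^+)}$ is positive definite) together with Lemma~\ref{lem_mod}, the modulation invariance of the class $\HP_\N(q)$, which says that $n\mapsto e^{-int}\alpha(n)$ stays HPD (proved by twisting the process $\sigma\mapsto e^{i|\sigma|t}X_\sigma$) and then integrates over $t$ against $d\nu(t)$ using that $\HP_\N(q)$ is a closed cone. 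You instead realize $\TT_\alpha^{(\F_q^+)}$ as the Hadamard product of $\TT_{\beta_q}^{(\F_q^+)}$ with the kernel $\widetilde K(\sigma,\delta)=\gamma(|\delta|-|\sigma|)$, the latter being positive definite as a pullback of a Toeplitz kernel through the length map, and invoke the Schur product theorem; your factorization identity is correct in all three cases of \eqref{def_kernel_a}, including the conjugation convention in the case $\sigma=\delta w$. Morally the arguments are cousins: the paper's twist by $e^{i|\sigma|t}$ is a Schur product with a rank-one kernel $e^{i|\sigma|t}\overline{e^{i|\delta|t}}$ followed by averaging in $t$, so the paper in effect decomposes your single Schur product into an integral of rank-one conjugations. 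What your version buys is that sufficiency collapses to a one-line application of a classical theorem; what the paper's version buys is the intermediate Lemma~\ref{lem_mod} (modulation invariance of $\HP_\N(q)$), which is of independent interest and is formulated at the level of the cone rather than at the level of individual Gram matrices.
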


Theorem \ref{thm_classification} implies that any $q$-HPD function  $\alpha: \N\rightarrow \C$ has an exponential decay:
\begin{align}\label{exp_decay}
|\alpha(n)| \le \alpha(0) q^{-n/2}, \quad n \in \N.
\end{align}

In Theorem \ref{thm_classification},  the positive measure $\nu$ is uniquely determined by the $q$-HPD function $\alpha$. In what follows, for emphasizing the dependence of $\alpha$,  we will  denote by $\nu_\alpha$  the measure determined by a $q$-HPD function. Thus we have 
\begin{align}\label{def_mu_a}
\widehat{\nu}_\alpha (n)  = q^{n/2} \alpha(n), \quad n \in \N. 
\end{align}

\subsection{Hyper-positive functions on the circle}

\begin{definition}\label{def-hp-circle}
An integrable function $\varphi: \T \rightarrow \C$ is called $q$-hyper-positive (abbr. $q$-HP), if the function $\N \ni n \mapsto \widehat{\varphi}(n)$ is  $q$-HPD. 
\end{definition}

Theorem  \ref{thm_classification} now implies the following criterion of $q$-HP functions.
\begin{theorem}[Criterion of $q$-HP functions]\label{thm_class_bis}
A function $\varphi: \T\rightarrow \C$ is $q$-HP if and only if there exists a positive Radon measure $\mu$ on $\T$ such that $\varphi = P_{1/\sqrt{q}} * \mu$. Moreover, the positive measure $\mu$ is uniquely determined by $\varphi$.  
\end{theorem}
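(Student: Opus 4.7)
The plan is to deduce Theorem \ref{thm_class_bis} directly from the Herglotz--Bochner-type criterion of Theorem \ref{thm_classification}; the only additional ingredient is the Fourier expansion of the Poisson kernel recorded in \eqref{def_Poisson}. That expansion gives $\widehat{P_r}(n)=r^{|n|}$ for every $n\in\Z$, whence the convolution theorem yields
\[
\widehat{P_r*\mu}(n)\;=\;r^{|n|}\,\widehat{\mu}(n),\qquad n\in\Z,
\]
for any finite Radon measure $\mu$ on $\T$. Taking $r=1/\sqrt{q}$, this is the precise dictionary between the spatial identity $\varphi=P_{1/\sqrt{q}}*\mu$ and the Fourier-side relation $\widehat{\varphi}(n)=q^{-n/2}\widehat{\mu}(n)$ on $\N$.

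For the sufficient direction, I would assume $\varphi=P_{1/\sqrt{q}}*\mu$ with $\mu$ a positive Radon measure on $\T$. Then $q^{n/2}\widehat{\varphi}(n)=\widehat{\mu}(n)$ is a classical positive definite sequence on $\N$ (being the Fourier--Stieltjes coefficients of a positive measure), so Theorem \ref{thm_classification} says $n\mapsto\widehat{\varphi}(n)$ is $q$-HPD, i.e., $\varphi$ is $q$-HP in the sense of Definition \ref{def-hp-circle}. For the necessary direction, suppose $\varphi$ is $q$-HP. Setting $\alpha(n):=\widehat{\varphi}(n)$, Theorem \ref{thm_classification} produces a unique positive Radon measure $\mu:=\nu_\alpha$ with $\widehat{\mu}(n)=q^{n/2}\widehat{\varphi}(n)$ for $n\in\N$. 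Substituting into the displayed convolution identity, $\widehat{\varphi}$ and $\widehat{P_{1/\sqrt{q}}*\mu}$ agree on $\N$; combined with the Hermitian symmetry of $\widehat{P_{1/\sqrt{q}}*\mu}$ on $\Z$ (inherited from the realness of the kernel $P_{1/\sqrt{q}}$ and the measure $\mu$), Fourier uniqueness forces $\varphi=P_{1/\sqrt{q}}*\mu$. The uniqueness of $\mu$ is transferred from the uniqueness clause of Theorem \ref{thm_classification}.

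Because all the analytic content is already carried by Theorem \ref{thm_classification}, the main difficulty here is really not a difficulty: the argument is purely a rearrangement of Fourier coefficients. The only point meriting explicit comment is the bridge from the one-sided (non-negative frequency) Fourier information controlled by the $q$-HPD condition to the full two-sided Fourier series of $\varphi$, which is handled by the Hermitian symmetry of the real-valued candidate $P_{1/\sqrt{q}}*\mu$.
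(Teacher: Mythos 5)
Your argument is correct and follows the paper's route exactly: the paper compresses it to a one-line remark that $\varphi\mapsto\widehat{\varphi}$ gives an affine correspondence between $\HP_\T(q)$ and $\HP_\N(q)$ and then cites Theorem~\ref{thm_classification}, while you simply spell out the Fourier-side dictionary $\widehat{P_{1/\sqrt q}*\mu}(n)=q^{-|n|/2}\widehat{\mu}(n)$ that makes the correspondence visible. One attribution slip in your necessary direction is worth fixing: the passage from agreement of Fourier coefficients on $\N$ to agreement on all of $\Z$ requires the Hermitian symmetry $\widehat{\varphi}(-n)=\overline{\widehat{\varphi}(n)}$ of $\varphi$ \emph{itself}, not merely that of $\widehat{P_{1/\sqrt{q}}*\mu}$ --- without it, $\varphi$ could still differ from $P_{1/\sqrt{q}}*\mu$ by an anti-analytic perturbation; the paper secures this tacitly by redefining $\HP_\T(q)$ with codomain $\R$ at the start of its proof, a restriction that the published statement (which allows $\varphi:\T\to\C$) quietly presupposes.
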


Theorem \ref{thm_class_bis} implies in particular that any $q$-HP function is {\it real-analytic} and is associated with a unique $q$-HPD function $\alpha$ on $\N$ via the formula: 
\begin{align}\label{def_phi}
\varphi_\alpha(\theta) = \sum_{n\in \Z}  \alpha(n) e^{i   n \theta},\, \text{where $\alpha(-n) : = \overline{\alpha(n)}$  for any $n \in \N$.}
\end{align}

\subsection{A prediction theory of branching-type SSP}

For any branching-type SSP $X = (X_\sigma)_{\sigma \in \F_q^{+}}$ on $\F_q^{+}$ (or equivalently on  $T_q$), using the classical Szeg\"o First Theorem, we will compute
\[
d_{L^2} \left(X_e, \,\overline{\spann}^{L^2}\big\{X_\sigma:  \sigma\in \F_q^{+}\setminus \{e\}\big\} \right),
\]
 the  $L^2$-distance from $X_e$ to the closed linear span by all  random variables $X_\sigma$ with $\sigma \in \F_q^{+}\setminus \{e\}$  in the $L^2$ space of the underlying probability space. 

\begin{theorem}\label{thm-pred}
Let $(X_\sigma)_{\sigma\in \F_q^{+}}$ be a branching-type SSP  on $\F_q^{+}$ with spectral measure $\mu_X$.  Let $\alpha: \N \rightarrow \C$ be the corresponding $q$-HPD function on $\N$  such that the equality \eqref{GF_K} is satisfied. Let $\nu_\alpha$ be the unique positive Radon measure on $\T$ determined  by \eqref{def_mu_a}  and write the Lebesgue decomposition 
\begin{align}\label{leb-dec}
d\nu_\alpha(\theta) = w_\alpha(\theta) d m(\theta) + d\nu_\alpha^{s}(\theta),
\end{align}
where $m$ is the normalized Haar measure on $\T$.   Then 
\begin{align}\label{pred-all}
d_{L^2} \left(X_e, \, \overline{\spann}^{L^2}\big\{X_\sigma:  \sigma\in \F_q^{+}\setminus \{e\}\big\} \right)   = \exp \left(\frac{1}{2}\int_\T  \log  w_\alpha(\theta)  dm(\theta)\right).
\end{align}
\end{theorem}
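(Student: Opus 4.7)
The plan is to reduce the prediction problem on the tree $\F_q^{+}$ to a one-dimensional Szeg\"o problem via a symmetrization that exploits the automorphism group of $T_q$. First I would introduce the symmetrized sums
$$S_n \;:=\; q^{-n/2}\sum_{\sigma\in \F_q^{+},\,|\sigma|=n} X_\sigma, \qquad n \geq 0,$$
so that $S_0 = X_e$. Using that two distinct vertices at the same level of $T_q$ are non-comparable (hence uncorrelated by \eqref{def-uncor}) and that every vertex at level $n$ has exactly $q^{m-n}$ descendants at level $m$, one finds for $0 \leq n \leq m$ that
$$\Cov(S_n, S_m) \;=\; q^{-(n+m)/2}\cdot q^{m}\cdot \alpha(m-n) \;=\; q^{(m-n)/2}\alpha(m-n) \;=\; \widehat{\nu_\alpha}(m-n),$$
the last equality being \eqref{def_mu_a}. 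Thus $(S_n)_{n\in \N}$ is a classical weak SSP on $\N$ whose spectral measure is exactly $\nu_\alpha$.

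Next I would show that $P_{H^{(e)}} X_e$ already lies in the subspace $V := \overline{\spann}^{L^2}\{S_n : n \geq 1\}$, where $H^{(e)} := \overline{\spann}^{L^2}\{X_\sigma : \sigma \in \F_q^{+}\setminus\{e\}\}$. Let $G$ denote the group of rooted automorphisms of $T_q$. Since the branching-Toeplitz kernel in \eqref{def_kernel_a} depends only on the lengths $|\sigma|, |\delta|$ and on whether $\sigma \preceq \delta$ or $\delta \preceq \sigma$, all of which are $G$-invariant, the assignment $U_g X_\sigma := X_{g\sigma}$ extends to a unitary action of $G$ on $\overline{\spann}^{L^2}\{X_\sigma : \sigma \in \F_q^{+}\}$. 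This action fixes $X_e$ and preserves $H^{(e)}$, so $P_{H^{(e)}}X_e$ is $G$-invariant. To pin down the $G$-invariant subspace of $H^{(e)}$, I would work on the finite-dimensional truncations $H_N^{(e)} := \spann\{X_\sigma : 1 \leq |\sigma| \leq N\}$: the finite automorphism subgroup $G_N$ (of $T_q$ cut at depth $N$) acts transitively on each level, and averaging $U_g$ over $G_N$ identifies the $G_N$-invariant vectors of $H_N^{(e)}$ with $\spann\{S_1, \dots, S_N\}$. Letting $N \to \infty$ yields that the $G$-invariant part of $H^{(e)}$ is exactly $V$. Therefore $P_{H^{(e)}}X_e \in V \subset H^{(e)}$, which forces $P_{H^{(e)}}X_e = P_V X_e$ and hence $d_{L^2}(X_e, H^{(e)}) = d_{L^2}(X_e, V)$.

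The classical Szeg\"o First Theorem applied to the SSP $(S_n)_{n \in \N}$, whose spectral measure $\nu_\alpha$ admits the Lebesgue decomposition \eqref{leb-dec}, now gives
$$d_{L^2}(X_e, V) \;=\; d_{L^2}\!\left(S_0,\, \overline{\spann}^{L^2}\{S_n : n \geq 1\}\right) \;=\; \exp\!\left(\tfrac{1}{2}\int_\T \log w_\alpha(\theta)\, dm(\theta)\right),$$
which is exactly \eqref{pred-all}. The non-routine step in this outline is the identification of the $G$-invariant subspace of $H^{(e)}$ with $V$: the inclusion of $V$ in the $G$-invariants is immediate from the construction of $S_n$, but the reverse inclusion requires verifying in the $L^2$-closure that no additional $G$-invariant directions arise beyond the symmetric sums. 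This is handled cleanly by the $G_N$-averaging on truncations together with a monotone limit, and is where the main care in the argument is required.
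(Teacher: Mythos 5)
Your proposal is correct and follows essentially the same route as the paper: both pass through the symmetrized sequence $S_n = \Theta_n$ (see \eqref{def_theta}), whose covariance is $\widehat{\nu_\alpha}(k)$ by Lemma \ref{lem_sp_av}, and reduce the problem to the classical Szeg\"o First Theorem once the prediction space is identified with $\overline{\spann}^{L^2}\{S_n : n \ge 1\}$. The only difference is in how that identification is justified: the paper's Lemma \ref{lem-sym} averages the coefficients over tuples of permutations in $\BS_q^n$ and applies a Jensen-type inequality to show the best finite-rank approximation of $X_e$ is already symmetric, whereas you argue via $G$-invariance of $P_{H^{(e)}}X_e$ under the rooted automorphism group $G$ and identify the $G$-invariant part of $H^{(e)}$ with $V$ by projection-commutation on truncations --- this is the same symmetrization, phrased group-theoretically.
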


Recall the definition of $T(q; 1)$ in  \S \ref{sec-extension} and Theorem \ref{thm-2-1} for the classification of spectral measure of branching-type SSP on $T(q;1)$. For branching-type SSP on $T(q; 1)$, we have the following
\begin{proposition}\label{prop-pred-T-q-1}
Let $q \ge 2$ be an integer and  let $(X_\sigma)_{\sigma \in T(q; 1)}$ be a branching-type SSP on $T(q; 1)$ with spectral measure $\mu$. Then 
\begin{align}\label{pred-T-q-1}
d_{L^2} \left(X_e, \, \overline{\spann}^{L^2}\big\{X_\sigma:  \sigma\in T(q; 1) \setminus \{e\}\big\} \right)   = \sqrt{ q \exp \left(\int_\T  \log  \Big( \frac{d\mu_{ac}}{dm}\Big) dm\right)  - (q-1) \mu(\T)}.
\end{align}
\end{proposition}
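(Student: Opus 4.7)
The plan is to exploit the special geometry of $T(q;1)$: removing the root splits the tree into $q$ disjoint rooted geodesic rays, along each of which the process is a classical SSP with spectral measure $\mu$. This reduces the proposition to an orthogonality-plus-Pythagoras argument together with one application of the classical Szeg\H{o}--Kolmogorov formula on each ray.

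Let $v_1, \dots, v_q$ be the children of $e$ and let $R_i$ be the unique rooted geodesic ray starting at $v_i$, so that $T(q;1) \setminus \{e\} = R_1 \sqcup \cdots \sqcup R_q$. Any two vertices drawn from distinct $R_i, R_j$ are non-comparable, so by \eqref{def-uncor} the closed subspaces
\[
H_i := \overline{\spann}^{L^2}\{X_\sigma : \sigma \in R_i\}, \qquad i = 1, \dots, q,
\]
are pairwise orthogonal and their orthogonal sum equals $H := \overline{\spann}^{L^2}\{X_\sigma : \sigma \in T(q;1) \setminus \{e\}\}$. If $P$ and $P_i$ denote the orthogonal projections onto $H$ and $H_i$ respectively, then $P = P_1 + \cdots + P_q$, and Pythagoras gives
\[
\|X_e - P X_e\|_{L^2}^2 = \|X_e\|_{L^2}^2 - \sum_{i=1}^q \|P_i X_e\|_{L^2}^2 = \mu(\T) - \sum_{i=1}^q \|P_i X_e\|_{L^2}^2,
\]
where we used $\|X_e\|_{L^2}^2 = \widehat{\mu}(0) = \mu(\T)$.

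For each $i$, the process restricted to $\{e\} \cup R_i$ is a classical SSP on $\N$ with spectral measure $\mu$, and the Kolmogorov isometry $X_\sigma \mapsto e^{i\, d(e,\sigma)\, \theta}$ identifies $X_e$ with $\mathbf 1 \in L^2(\T, \mu)$ and $H_i$ with $\overline{\spann}^{L^2(\mu)}\{e^{in\theta} : n \ge 1\}$. Since $\mu$ is a positive (hence real) measure, complex conjugation is an isometric involution of $L^2(\mu)$ sending this subspace onto $\overline{\spann}^{L^2(\mu)}\{e^{-in\theta} : n \ge 1\}$, so the classical Szeg\H{o}--Kolmogorov formula applied to $\mu$ yields
\[
\|X_e - P_i X_e\|_{L^2}^2 = \exp\!\left(\int_\T \log\!\Bigl(\frac{d\mu_{ac}}{dm}\Bigr) dm\right) =: \sigma^2,
\]
and hence $\|P_i X_e\|_{L^2}^2 = \mu(\T) - \sigma^2$. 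Substituting into the previous display,
\[
\|X_e - P X_e\|_{L^2}^2 = \mu(\T) - q(\mu(\T) - \sigma^2) = q \sigma^2 - (q - 1)\mu(\T),
\]
and \eqref{pred-T-q-1} follows on taking square roots. I foresee no real obstacle: the orthogonality across rays is immediate from \eqref{def-uncor}, the one-ray prediction error is just classical Szeg\H{o}, and the nonnegativity of the quantity under the square root is independently guaranteed by Theorem~\ref{thm-2-1}.
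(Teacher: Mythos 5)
Your argument is correct and mirrors the paper's proof essentially step for step: decompose $T(q;1)\setminus\{e\}$ into the $q$ rays emanating from the children of $e$, observe that the corresponding closed spans are mutually orthogonal by \eqref{def-uncor}, apply Pythagoras to reduce to a single-ray prediction problem, and then invoke the classical Szeg\H{o} theorem on that ray. The only cosmetic difference is that the paper phrases the equality of the $q$ single-ray projections as a symmetry observation and does not spell out the Kolmogorov isometry, but these are the same computation.
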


\subsection{Application in hyper-contractive Hankel inequalities}

From the definition of HP functions, any $q$-HP function is naturally associated with a centered stochastic process on $\F_q^{+}$ satisfying \eqref{GF_K}.  By restricting this stochastic process on some sub-trees of $\F_q^{+}$, we can construct classical SSP's on $\N$ and obtain the following {\it hyper-contractive} inequalities of {\it Hankel operators}. These inequalities seem to be new in the litterature.  Other similar inequalities can also be found using our procedure.  The details are explained in \S \ref{sec_hankel}.   The analogue of these inequalities for matrix-valued functions will be treated in a forthcoming paper. 
The Hankel operator is recalled as follows. The Hardy space $H^2(\T)$ is given  by 
\[
H^2(\T): = \Big\{ f (e ^{i\theta}) = \sum_{n =0}^\infty a_n e^{i n \theta}\Big| a_n\in \C \an \sum_{n =0}^\infty |a_n|^2 <\infty\Big\}.
\]
Let $H^2_0(\T)\subset H^2(\T)$ denote the subspace of functions $f\in H^2(\T)$ with $\widehat{f}(0) = 0$ and define  $H^2_{-}(\T): =  L^2(\T) \ominus H_0^2(\T)$, that is, 
\[
H_{-}^2(\T): = \Big\{ f (e ^{i\theta}) = \sum_{n =0}^\infty a_n e^{-i n \theta}\Big| a_n\in \C \an \sum_{n=0}^\infty |a_n|^2 <\infty\Big\}.
\] 
Let $R_{-}: L^2(\T) \rightarrow H_{-}^2(\T)$ be the orthogonal projection onto $H_{-}^2(\T)$ (called Riesz projection). Given a function $\varphi \in L^2(\T)$, we define the Hankel operator $H_\varphi: H^2_0(\T) \rightarrow H^2_{-}(\T)$ on dense subset of analytic trigonometric polynomials  by 
\begin{align}\label{def_hankel}
H_\varphi(f)= R_{-} (\varphi f).
\end{align}
More explicitly, for an analytic trigonometric polynomial $f = \sum_{n\ge 0} a_n e^{i n \theta}$, we have 
\begin{align}\label{def_h_f}
H_\varphi (f)  =  \sum_{k = -\infty}^0 \Big(  \sum_{n =-\infty}^{k} \widehat{\varphi}(n) a_{k-n} \Big) e^{i k \theta}
\end{align}
 Note that our definition of Hankel operators is slightly different from the classical way of definition, see Peller \cite[Chapter 1]{Peller-book}.  The Hankel operators in \cite{Peller-book} are defined as an operators from $H^2(\T)$ to $L^2(\T)\ominus H^2(\T)$ and are, using the notation of this paper, given by $f \mapsto e^{-i\theta} H_\varphi ( e^{i\theta}f)$.

\begin{proposition}\label{prop_hankel_1}
Assume that  $\varphi: \T\rightarrow \R_{+}$ is $2$-HP and is not identically zero. Then for any  Hardy function $f \in H_0^2(\T)$, the function $H_\varphi(f)$ is real-analytic and we have 
\begin{align}\label{ineq_hankel_1}
\left\|\frac{H_\varphi (f)}{\sqrt{\varphi}}\right\|_{L^\infty(\T)} \le \| f\|_{L^2(\T; \varphi)}: =   \left(\int_\T |f(e^{it})|^2 \varphi (e^{it}) dm(t)\right)^{1/2}. 
\end{align}
\end{proposition}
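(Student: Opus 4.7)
Set $r := 1/\sqrt{2}$. The plan is first to invoke Theorem~\ref{thm_class_bis} to write $\varphi = P_{r} * \mu$ for a unique positive Radon measure $\mu$ on $\T$, which amounts to $\widehat{\varphi}(n) = r^{|n|}\widehat{\mu}(n)$ (up to a normalization I suppress). The geometric decay $|\widehat{\varphi}(n)| \le r^{|n|}\mu(\T)$ immediately makes the series defining $H_{\varphi}(f)$ in \eqref{def_h_f} converge absolutely on an annular neighborhood of $\T$, which will give the real-analyticity.

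For the main inequality, write $f = \sum_{m \ge 1} a_m e^{im\theta}$. Starting from \eqref{def_h_f}, substituting $\widehat{\varphi}(k-m) = r^{m-k}\overline{\widehat{\mu}(m-k)}$ (valid for $k \le 0 < m$) together with $\overline{\widehat{\mu}(n)} = \int_\T e^{in\theta} d\mu(\theta)$, summing the geometric series in $j := m-k$, and interchanging the sum with the integral over $\mu$ (justified by the decay) will produce the clean formula
\[
H_\varphi(f)(e^{i\theta_0}) = \int_\T \frac{f(re^{i\theta})}{1 - re^{i(\theta - \theta_0)}}\, d\mu(\theta),
\]
where $f(re^{i\theta}) = \sum_{m \ge 1} a_m r^m e^{im\theta}$ is the holomorphic extension of $f$ to $\D$ evaluated on the circle of radius $r$. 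Cauchy--Schwarz in $L^2(\T, d\mu)$ combined with the Poisson-kernel identity $|1-re^{i\psi}|^{-2} = P_r(e^{i\psi})/(1-r^2)$ then yields
\[
|H_\varphi(f)(e^{i\theta_0})|^2 \le \frac{\varphi(e^{i\theta_0})}{1-r^2} \cdot \int_\T |f(re^{i\theta})|^2\, d\mu(\theta).
\]

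The remaining integral will be controlled by subharmonicity. Since $f \in H_0^2$ vanishes at $0$, $g(z) := f(z)/z$ lies in $H^2(\D)$ with $|g| = |f|$ on $\T$, and $|f(re^{i\theta})|^2 = r^2 |g(re^{i\theta})|^2$; subharmonicity of $|g|^2$ gives $|g(re^{i\theta'})|^2 \le (P_r * |g|^2)(\theta')$, so integrating against $\mu$ and applying Fubini with $P_r * \mu = \varphi$ leads to
\[
\int_\T |f(re^{i\theta})|^2\, d\mu(\theta) \le r^2 \int_\T |g|^2\, \varphi\, dm = r^2 \|f\|_{L^2(\T;\varphi)}^2.
\]
Combining with the previous display and using the identity $r^2/(1-r^2) = 1$ for $r^2 = 1/2$ delivers the claimed inequality. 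The main obstacle, and the point requiring most care, is the sharp constant: a direct application of Cauchy--Schwarz alone contributes the factor $1/(1-r^2) = 2$, and only the extra factor $r^2 = 1/2$ gained from $f(0) = 0$ (the hallmark of $H_0^2$ rather than $H^2$) supplies the precise cancellation producing the constant $1$. Without this matching, specific to $q = 2$, the method would give a weaker bound by a factor of $\sqrt{2}$, so the delicate point is to recognize that the vanishing of $f$ at the origin must be exploited in the bound on $\int_\T |f(re^{i\theta})|^2 d\mu$.
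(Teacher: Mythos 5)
Your proof is correct, and it takes a genuinely different route from the paper's proof of Proposition~\ref{prop_hankel_1}. The paper's argument is probabilistic: it builds a stationary process $(Y_k)_{k\in\N}$ on $\N$ by linearly averaging the branching SSP realizing $\varphi$ along the one-sided branch $\{s_1^k s_2^n\}$, computes its covariance in terms of $\widehat{\varphi}$ and the Taylor coefficients of $f$, and then reads off the inequality from the Herglotz--Bochner nonnegativity of the resulting spectral density after optimizing over a free parameter $a_0$. You instead give a direct function-theoretic argument: you derive the closed kernel formula
\begin{equation*}
H_\varphi(f)(e^{i\theta_0}) = \int_\T \frac{f(re^{i\theta})}{1 - re^{i(\theta-\theta_0)}}\,d\mu(\theta),
\end{equation*}
apply Cauchy--Schwarz in $L^2(d\mu)$ (producing the factor $\varphi(e^{i\theta_0})/(1-r^2)$ via the Poisson identity $|1-re^{i\psi}|^{-2} = P_r(e^{i\psi})/(1-r^2)$), and then bound $\int_\T |f(re^{i\theta})|^2\,d\mu(\theta)$ by the subharmonicity of $|f(z)/z|^2$ followed by Fubini, which is where the extra factor $r^2$ coming from $f(0)=0$ enters. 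This is actually closer in spirit to the paper's later proof of the more general Proposition~\ref{prop_hankel_g1}, which also starts from $\varphi = P_r * \mu$ and Cauchy--Schwarz; but even there the details diverge: the paper applies Cauchy--Schwarz against each individual probability measure $P_{re^{it}}\,dm$ and then linearizes with an AM--GM trick before integrating against $\mu$ and optimizing, whereas you apply Cauchy--Schwarz once directly over $\mu$ and replace the AM--GM/optimization step by the $H^2$ subharmonicity bound. Both routes extract the same sharp constant $r/\sqrt{1-r^2}$, collapsing to $1$ at $r=1/\sqrt{2}$, and both hinge on the vanishing $f(0)=0$ for the crucial factor of $r$ you highlight. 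Your version has the advantage of producing a clean integral representation for $H_\varphi(f)$ and of generalizing verbatim to arbitrary $r\in(0,1)$, recovering Proposition~\ref{prop_hankel_g1} in one stroke.
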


For stating the next result, we introduce, for each integer $N\ge 1$,  an operator $E_N$ as follows: for $f$ a suitable function on $\T$, set 
\begin{align}\label{def_E_N}
E_N(f)(e^{i\theta}) : = \frac{1}{N} \sum_{k=0}^{N-1} f(e^{i  \frac{\theta +  2 \pi k}{N} }), \quad \theta \in \T.
\end{align}
Note that in the above definition of $E_N$, although each summand $f(e^{i \frac{\theta + 2 \pi k}{N}})$ depends on the choice of the representative in $\R$ of the point $\theta \in \T = \R/2 \pi \Z$, the whole sum does not. The operator $E_N$ is characterized by 
\begin{align}\label{Fourier_dilation}
\widehat{E_N(f)}(n) =  \widehat{f}(nN), \quad n\in \Z.
\end{align}

\begin{proposition}\label{prop_hankel_2}
Assume that $\varphi: \T\rightarrow \R_{+}$ is $2$-HP  and is not identically zero. Then for any  Hardy function $f \in H_0^2(\T)$ and any  analytic trigonometric polynomial $B_0$ of degree at most $N-1$, not identically zero:
\[
B_0 (e^{it}) = \sum_{n=0}^{N-1} b_n e^{i n t},
\]
we have
\begin{align}\label{ineq_hankel_2}
\left\| \frac{ E_N[ H_\varphi (B_0 f)]}{ \sqrt{E_N [ |B_0|^2\varphi ]}}  \right\|_{L^\infty(\T)} \le   \| f\|_{L^2(\T; \varphi)}. 
\end{align}
\end{proposition}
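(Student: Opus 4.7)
The plan is to reduce Proposition~\ref{prop_hankel_2} to Proposition~\ref{prop_hankel_1} via a simple algebraic identity combined with a pointwise Jensen-type inequality for the averaging operator $E_N$. The essential observation is that inserting the analytic polynomial $B_0$ of degree $\le N-1$ before or after the Hankel operator $H_\varphi$ yields expressions that differ only by a trigonometric polynomial whose Fourier support lies in $\{1,\ldots,N-1\}$, and all such polynomials lie in the kernel of $E_N$.

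The first, and main, step is to establish the identity
\[
E_N\bigl[H_\varphi(B_0 f)\bigr] \;=\; E_N\bigl[B_0 \cdot H_\varphi(f)\bigr].
\]
To verify this I would compare Fourier coefficients: $H_\varphi(B_0 f)$ is supported in Fourier modes $m \le 0$ by definition of $H_\varphi$, while $B_0 \cdot H_\varphi(f)$ is supported in modes $m \le N-1$ since $B_0$ has degree $\le N-1$ and $H_\varphi(f)$ has Fourier support in $m\le 0$. A short direct computation using $\widehat{H_\varphi g}(m) = \widehat{\varphi g}(m)$ for $m \le 0$ together with the definition of Fourier coefficients of a product shows that the two functions agree in all Fourier modes $m \le 0$. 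Consequently their difference has Fourier support in $\{1,2,\ldots,N-1\}$, which contains no integer multiple of $N$. Since by \eqref{Fourier_dilation} $E_N$ retains only Fourier coefficients indexed by integer multiples of $N$, it annihilates this difference, establishing the claimed identity.

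Next I would invoke Proposition~\ref{prop_hankel_1} pointwise. Because $\varphi$ is $2$-HP and hence real-analytic (Theorem~\ref{thm_class_bis}), and $H_\varphi(f)$ is itself real-analytic by Proposition~\ref{prop_hankel_1}, the $L^\infty$-bound of Proposition~\ref{prop_hankel_1} upgrades to the everywhere pointwise estimate
\[
|H_\varphi(f)(e^{i\psi})|^2 \;\le\; \varphi(e^{i\psi})\,\|f\|_{L^2(\T;\varphi)}^2, \qquad \psi \in \T.
\]
Multiplying by $|B_0(e^{i\psi})|^2$, applying the elementary Jensen inequality $|E_N(g)(e^{i\theta})|^2 \le E_N(|g|^2)(e^{i\theta})$ (valid pointwise for any $g$ since $E_N$ is an $N$-point average) with $g = B_0 \cdot H_\varphi(f)$, and using the monotonicity of $E_N$ on non-negative functions will give
\[
\bigl|E_N[B_0 \cdot H_\varphi(f)](e^{i\theta})\bigr|^2 \;\le\; E_N\bigl[|B_0|^2 \varphi\bigr](e^{i\theta})\cdot \|f\|_{L^2(\T;\varphi)}^2.
\]
Combining with the identity of the first step, dividing by $E_N[|B_0|^2 \varphi]$ (which is a.e.\ positive since $\varphi$ is real-analytic and not identically zero, and $B_0$ has only finitely many zeros), and passing to the $L^\infty$-norm yields \eqref{ineq_hankel_2}.

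I expect the main obstacle to be setting up the Fourier-coefficient identity of the first step cleanly, in particular keeping track of the Fourier supports of $H_\varphi(B_0 f)$ and $B_0 \cdot H_\varphi(f)$. Once that identity is in place, the remainder of the argument is a routine chain of pointwise estimates combined with the standard Jensen inequality for the discrete averaging operator $E_N$.
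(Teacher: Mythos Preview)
Your proposal is correct and takes a genuinely different route from the paper's proof.

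The paper proves Proposition~\ref{prop_hankel_2} \emph{independently} of Proposition~\ref{prop_hankel_1}, by building a second stationary process $(W_k)_{k\in\N}$ on $\N$ out of the branching-type SSP on $\F_2^{+}$ (see \eqref{def_W}), computing its covariances (Lemma~\ref{lem_sta_W}), and then extracting the inequality from the Herglotz--Bochner positivity condition \eqref{pos_cond_W} exactly as in the proof of Proposition~\ref{prop_hankel_1}. In other words, the paper repeats the stochastic-process machinery with a more elaborate averaging geometry on the tree.

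Your argument instead shows that Proposition~\ref{prop_hankel_2} is a \emph{formal corollary} of Proposition~\ref{prop_hankel_1}: the key identity $E_N[H_\varphi(B_0 f)]=E_N[B_0\cdot H_\varphi(f)]$ (which you verify correctly, since the difference has Fourier support in $\{1,\dots,N-1\}$ and is therefore annihilated by $E_N$) reduces everything to the pointwise bound of Proposition~\ref{prop_hankel_1} followed by the Cauchy--Schwarz inequality $|E_N g|^2\le E_N(|g|^2)$ for the $N$-point average. This is shorter and more elementary, and it makes transparent that the operator $E_N$ is doing nothing more than erasing the ``commutator'' between multiplication by $B_0$ and the Hankel projection. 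What the paper's approach buys, by contrast, is a self-contained illustration of its central theme: each Hankel-type inequality corresponds to a specific stationary process assembled from the tree, and Proposition~\ref{prop_hankel_2} exhibits a genuinely new such process rather than recycling the one behind Proposition~\ref{prop_hankel_1}.

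One small point: your justification that $E_N[|B_0|^2\varphi]$ is strictly positive is slightly loose. The clean argument (cf.\ the remark after Proposition~\ref{prop_hankel_2}) is that $\varphi$ is bounded below by a positive constant (Lemma~\ref{lem_ul_bd}), and a nonzero polynomial $B_0$ of degree at most $N-1$ cannot vanish at all $N$ of the points $e^{i(\theta+2\pi k)/N}$, $k=0,\dots,N-1$; hence $E_N[|B_0|^2\varphi](e^{i\theta})>0$ for every $\theta$.
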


\begin{remark}
In Proposition \ref{prop_hankel_2},  the assumption that the non-zero trigonometric polynomial $B_0$ is of degree at most $N-1$ implies that $E_N[|B_0|^2 \varphi]$ never vanishes  on $\T$.  In general, the following inequality is not true: 
\[
\left\| \frac{ H_\varphi (B_0 f)}{ \sqrt{ |B_0|^2\varphi }}  \right\|_{L^\infty(\T)} \le   \| f\|_{L^2(\T; \varphi)}.
\]
\end{remark}

\begin{proposition}\label{prop_hankel_3}
Assume that $\phi$ is a function not identically zero and is of the form $\phi = P_{\sqrt{2/3}}*\mu$ for a positive Radon measure $\mu$ on $\T$.  Then for any  $f \in H_0^2(\T)$, we have
\begin{align}\label{ineq_hankel_3}
\left\| \frac{ P_{1/\sqrt{2}} * H_\phi (f) }{\sqrt{P_{1/\sqrt{2}} * \phi }} \right\|_{L^\infty(\T)} \le   \| f\|_{L^2(\T; \phi)}.  
\end{align} 
\end{proposition}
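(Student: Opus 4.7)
The strategy is to prove the stated pointwise bound $|P_{1/\sqrt 2}*H_\phi(f)(\zeta)|^2\le (P_{1/\sqrt 2}*\phi)(\zeta)\,\|f\|_{L^2(\T;\phi)}^2$ for every $\zeta\in\T$ by representing $P_{1/\sqrt 2}*H_\phi(f)(\zeta)$ as a Cauchy-type integral against the measure $\mu$ and then invoking Cauchy--Schwarz. The starting observation is the semigroup identity for Poisson kernels, $P_r*P_s=P_{rs}$, which gives
\[
P_{1/\sqrt 2}*\phi\;=\;P_{1/\sqrt 2}*P_{\sqrt{2/3}}*\mu\;=\;P_{1/\sqrt 3}*\mu.
\]
Hence the denominator on the left is a $3$-HP function $\psi$, and the $(1/\sqrt 2)^{|n|}$ factor introduced by the $P_{1/\sqrt 2}$-convolution can be combined with the $(\sqrt{2/3})^{|n|}$ coming from $\widehat\phi(n)$ to produce exactly the $3$-HP damping $3^{-|n|/2}$ of $\widehat\psi(n)$.

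Concretely, for $f=\sum_{k\ge 1}a_k e^{ik\theta}\in H_0^2(\T)$ I will first compute
\[
\widehat{P_{1/\sqrt 2}*H_\phi(f)}(-\ell)\;\propto\; 3^{-\ell/2}\int_\T F(e^{i\theta})\,e^{i\ell\theta}\,d\mu(\theta),\qquad F(e^{i\theta}):=f\bigl(\sqrt{2/3}\,e^{i\theta}\bigr)=\sum_{k\ge 1}(\sqrt{2/3})^k a_k\,e^{ik\theta},
\]
where $F$ is the analytic extension of $f$ to the circle of radius $\sqrt{2/3}$. Summing the resulting geometric series in $\ell\ge 0$ produces the closed-form integral representation
\[
P_{1/\sqrt 2}*H_\phi(f)(\zeta)\;\propto\;\int_\T\frac{F(e^{i\theta})}{1-e^{i\theta}/(\sqrt 3\,\zeta)}\,d\mu(\theta).
\]
Cauchy--Schwarz then splits the estimate into $\int_\T|F|^2\,d\mu$ and $\int_\T|1-e^{i(\theta-\arg\zeta)}/\sqrt 3|^{-2}\,d\mu$. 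The second of these is an immediate consequence of the Poisson-kernel identity $|1-re^{i\alpha}|^{-2}=\tfrac{2\pi}{1-r^2}P_r(e^{i\alpha})$ with $r=1/\sqrt 3$, and reproduces (up to the expected normalization constants) precisely the desired denominator $(P_{1/\sqrt 3}*\mu)(\zeta)=(P_{1/\sqrt 2}*\phi)(\zeta)$.

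The main and only non-routine point is the $F$-side estimate $\int_\T|F|^2\,d\mu\le c\,\|f\|_{L^2(\T;\phi)}^2$ with the correct constant. The plain sub-mean-value inequality $|f(re^{i\theta})|^2\le 2\pi(P_r*|f|^2)(e^{i\theta})$ combined with the Fubini identity $\int_\T P_r*|f|^2\,d\mu=\int_\T|f|^2(P_r*\mu)\,dm=\|f\|_{L^2(\T;\phi)}^2$ only yields a constant that is a factor $3/2$ too large. To recover the sharp constant I will exploit that $f\in H_0^2(\T)$ vanishes at the origin: writing $f(z)=z\tilde f(z)$ with $\tilde f\in H^2(\D)$ and $|\tilde f|=|f|$ on $\T$, the sub-mean-value applied to the subharmonic $|\tilde f|^2$ gives the Schwarz-type improvement
\[
|f(re^{i\theta})|^2\;=\;r^2\,|\tilde f(re^{i\theta})|^2\;\le\; 2\pi r^2\,(P_r*|f|^2)(e^{i\theta}).
\]
With $r=\sqrt{2/3}$ the extra factor $r^2=2/3$ is exactly what compensates for the missing $2/3$ in the naive bound, and plugging the sharpened $F$-estimate back into the Cauchy--Schwarz inequality together with the kernel identity completes the proof.
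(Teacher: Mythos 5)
Your proof is correct but takes a genuinely different route from the paper's. The paper proves \eqref{ineq_hankel_3} probabilistically: fixing the embedding $\F_2^{+}=\langle s_2,s_3\rangle\subset\F_3^{+}$, it builds a stationary process $(U_k)_{k\in\N}$ by averaging a branching-type SSP on $\F_3^{+}$ over the level sets of $\F_2^{+}$, verifies stationarity and exponential decay of covariances, and reads off the inequality from the Herglotz--Bochner positivity of the resulting positive-definite sequence, in parallel with Propositions \ref{prop_hankel_1} and \ref{prop_hankel_2}. You bypass the stochastic-process framework entirely and stay inside classical Hardy-space function theory: the Poisson semigroup identity $P_{1/\sqrt 2}*P_{\sqrt{2/3}}=P_{1/\sqrt 3}$, the closed-form representation of $P_{1/\sqrt 2}*H_\phi(f)$ as a Cauchy integral against $\mu$ with the analytic dilate $F(e^{i\theta})=f(\sqrt{2/3}\,e^{i\theta})$ in the numerator and the kernel $(1-e^{i\theta}/(\sqrt 3\,\zeta))^{-1}$, and then Cauchy--Schwarz. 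The kernel factor reproduces $(P_{1/\sqrt 3}*\mu)(\zeta)=(P_{1/\sqrt 2}*\phi)(\zeta)$ via $|1-re^{i\eta}|^{-2}=\tfrac{2\pi}{1-r^2}P_r(e^{i\eta})$, and the decisive point --- which you correctly identify and repair --- is that the plain subharmonic sub-mean-value bound loses a factor $\sqrt{3/2}$, whereas writing $f(z)=z\tilde f(z)$ for $f\in H_0^2$ with $|\tilde f|=|f|$ on $\T$ sharpens it to $|f(re^{i\theta})|^2\le 2\pi r^2 (P_r*|f|^2)(e^{i\theta})$, and $r^2=2/3$ cancels the loss exactly. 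In effect you bring Proposition \ref{prop_hankel_3} under the same direct-analysis umbrella the paper already uses for Proposition \ref{prop_hankel_g1} (Lemma \ref{lem_hankel_poisson}), which makes the argument shorter and self-contained; the paper's process-theoretic route is less elementary but exhibits where the inequality ``comes from'' in the tree picture and why the auxiliary tree is $T_3$.
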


\subsection{Generalization of the Hankel inequalities}
The inequality \eqref{ineq_hankel_1} has natural generalizations. For more generalizations below, it is more convenient to work with the Riesz projection $R_{+}: L^2(\T) \rightarrow H^2(\T)$ (instead of using $R_{-}$).  Note that our definitions of the projections $R_{+}$ and $R_{-}$ are slightly different from the classical ones, in particular,  we have
\[
R_{+}  + R_{-} = Id + P_0,
\]
where $P_0$ is the projection onto the one-dimensional space of  constant functions on $\T$.

\begin{proposition}\label{prop_hankel_g1}
Given any $0\le r <1$ and any positive Radon measure $\mu$ on $\T$ such that $\mu(\T) \ne 0$. Let $\varphi = P_r* \mu$. Then for any $f\in H_0^2(\T)$, we have 
\begin{align}\label{ineq_c_r}
\left\|   \frac{H_\varphi(f)}{\sqrt{\varphi}}\right\|_{L^\infty(\T)} = \left\|   \frac{R_{+}(\bar{f} \varphi )}{\sqrt{\varphi}}\right\|_{L^\infty(\T)} \le \frac{r}{\sqrt{1-r^2}}  \| f\|_{L^2(\T; \varphi)}. 
\end{align}
The constant $r/\sqrt{1-r^2}$ in the above inequality is optimal. In particular, when $r  = 1/\sqrt{2}$, the inequality \eqref{ineq_c_r} reduces to the inequality \eqref{ineq_hankel_1}.
\end{proposition}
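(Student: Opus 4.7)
The plan is to reduce $H_\varphi(f)=R_{-}(\varphi f)$ to a Riesz projection and then find an integral kernel representation for $R_{+}(\bar f \varphi)$. Since $\mu$ is positive, $\varphi=P_r*\mu$ is real-valued, so for any $h\in L^2(\T)$ one has $\overline{R_{-} h}=R_{+}\bar h$; applied to $h=\varphi f$, this gives $\overline{H_\varphi(f)}=R_{+}(\bar f\varphi)$ and hence the first equality in \eqref{ineq_c_r}. Substituting $\varphi(w)=\frac{1-r^2}{2\pi}\int_\T |1-rw\bar\zeta|^{-2}\,d\mu(\zeta)$, expanding the two geometric series in $(1-rw\bar\zeta)$ and $(1-r\bar w\zeta)$, and collecting only non-negative Fourier frequencies in $w$, I expect the representation
\[
R_{+}(\bar f\varphi)(w)\;=\;\frac{1}{2\pi}\int_\T \frac{\overline{f(r\zeta)}}{1-rw\bar\zeta}\,d\mu(\zeta),
\]
where $f(r\zeta)$ is the value of the holomorphic extension of $f$ at $r\zeta\in\D$.

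To produce the sharp factor $r$, I use the vanishing $f(0)=0$ to factor $f(z)=zg(z)$ with $g\in H^2(\T)$ and $|g|=|f|$ on $\T$. The algebraic identity $\bar\zeta/(1-rw\bar\zeta)=1/(\zeta-rw)$ on $|\zeta|=1$ then yields
\[
R_{+}(\bar f\varphi)(w)\;=\;\frac{r}{2\pi}\int_\T\frac{\overline{g(r\zeta)}}{\zeta-rw}\,d\mu(\zeta).
\]
Cauchy--Schwarz, combined with the identity $|\zeta-rw|^2=|1-rw\bar\zeta|^2$ and the defining formula for $\varphi$, gives
\[
\int_\T\frac{d\mu(\zeta)}{|\zeta-rw|^2}\;=\;\frac{2\pi\,\varphi(w)}{1-r^2},\qquad
|R_{+}(\bar f\varphi)(w)|^2\;\le\;\frac{r^2\,\varphi(w)}{2\pi(1-r^2)}\int_\T|g(r\zeta)|^2\,d\mu(\zeta).
\]

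The remaining integral is controlled by subharmonicity of $|g|^2$: one has $|g(r\zeta)|^2\le|g|^2_{\mathrm{harm}}(r\zeta)=2\pi(P_r*|g|^2)(\zeta)$, and Fubini then gives
\[
\int_\T|g(r\zeta)|^2\,d\mu(\zeta)\;\le\;2\pi\int_\T|g|^2\,(P_r*\mu)\,dm\;=\;2\pi\,\|f\|_{L^2(\T;\varphi)}^2.
\]
Combining these inequalities yields $|R_{+}(\bar f\varphi)(w)|^2\le\frac{r^2}{1-r^2}\,\varphi(w)\,\|f\|_{L^2(\T;\varphi)}^2$, which is the stated bound. Sharpness is checked on the explicit pair $\mu=\delta_{1}$ and $f(w)=w$: direct computation gives equality in each step, and the specialization $r=1/\sqrt{2}$ gives $r/\sqrt{1-r^2}=1$, recovering \eqref{ineq_hankel_1}. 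The key subtlety is the factorization $f=zg$: without it, a naive Cauchy--Schwarz on the kernel $(1-rw\bar\zeta)^{-1}$ produces only the weaker constant $1/\sqrt{1-r^2}$, and the sharp factor $r$ is precisely the contribution of the vanishing condition $f(0)=0$.
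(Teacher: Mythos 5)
Your proof is correct and takes a genuinely different route from the paper's. Both arguments extract the crucial factor $r$ from the vanishing $f(0)=0$ (you factor $f=zg$; the paper pulls $r$ out of the power series in Remark 6.2), and both reduce the Hankel operator to a Poisson--Cauchy kernel computation. The difference is in how the passage from per--point-mass estimates to a general $\varphi=P_r*\mu$ is made. The paper proves, for each \emph{single} Poisson kernel $P_{re^{it}}$, the pointwise identity $\bigl|R_-(fP_{re^{it}})\bigr|/\sqrt{P_{re^{it}}}\equiv \tfrac{r}{\sqrt{1-r^2}}\bigl|\int e^{-i\alpha}fP_{re^{it}}\bigr|$, then applies Cauchy--Schwarz, then splits the resulting product $\sqrt{P_{re^{it}}(e^{i\theta})}\,\|f\|_{L^2(P_{re^{it}})}$ linearly by the arithmetic--geometric mean trick with a free parameter $\lambda$, integrates $d\mu(t)$, and re-optimizes over $\lambda$. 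You instead derive a closed-form Cauchy-type representation $R_+(\bar f\varphi)(w)=\tfrac{r}{2\pi}\int_{\T}\tfrac{\overline{g(r\zeta)}}{\zeta-rw}\,d\mu(\zeta)$ for the full measure at once, apply Cauchy--Schwarz once with respect to $d\mu$ (the $|\zeta-rw|^{-2}$ weight reproducing $\varphi(w)/(1-r^2)$ exactly), and control the leftover $\int|g(r\zeta)|^2d\mu(\zeta)$ by the subharmonicity bound $|g(r\zeta)|^2\le 2\pi(P_r*|g|^2)(\zeta)$ followed by Fubini. Your route replaces the paper's $\lambda$-splitting-and-reintegration by the Hardy-space majorization $|g(rz)|^2\le$ Poisson extension of $|g|^2$; it is arguably more transparent, at the cost of invoking that one extra (standard) fact. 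Both proofs yield the same sharp constant and the same extremal pair $\mu=\delta_1$, $f(w)=w$.
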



It seems natural to ask 
\begin{question}
For which class of functions $\varphi$ on $\T$ do we have $H_\varphi \in B(H^2, L^\infty)$, that is,  when does the Hankel operator $H_\varphi$ define a bounded operator from $H^2(\T)$ to $L^\infty(\T)$ ?
\end{question} 
  We will obtain a necessary condition and a sufficient condition for $H_\varphi \in B(H^2, L^\infty)$. Recall that for any $s> 0$,  the Sobolev space $\BH^{s}(\T)$ is defined as the set of functions $f: \T\rightarrow \C$ such that 
\[
\| f\|_{\BH^{s}}^2 =  \sum_{n\in \Z} (1+ n^2)^{s} \cdot | \widehat{f}(n)|^2<\infty.
\] 
 The Wiener space $\BA_1(\T) \subset L^\infty(\T)$ is defined as the set of functions $f: \T\rightarrow \C$ such that 
\[
\| f\|_{\BA_1}:  = \sum_{n\in \Z} | \widehat{f}(n)|<\infty.  
\]

\begin{proposition}\label{prop_pos_coef}
If $\varphi$ is a symbol such that all the Fourier coefficients of $R_{-}(\varphi)$ are positive. Then a necessary and sufficient condition for $H_\varphi \in B(H^2, L^\infty)$ is 
\begin{align}\label{pos_coef_cond}
\sum_{n=0}^\infty \Big(  \sum_{m =-\infty}^{-n} \widehat{\varphi}(m) \Big)^2 <\infty. 
\end{align}
\end{proposition}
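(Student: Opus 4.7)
The plan is to first observe, from the expansion \eqref{def_h_f}, that $H_\varphi$ depends only on the Fourier coefficients $\widehat{\varphi}(n)$ with $n\le 0$, i.e.\ only on $R_{-}(\varphi)$. Hence we may assume $\widehat{\varphi}(n)=0$ for $n\ge 1$ and set $c_m := \widehat{\varphi}(m) \ge 0$ for $m\le 0$. Writing $S_j := \sum_{m\le -j} c_m \in [0,\infty]$ for $j\ge 0$, condition \eqref{pos_coef_cond} becomes $\sum_{j\ge 0} S_j^2 <\infty$. For $f(e^{i\theta}) = \sum_{j\ge 0}a_j e^{ij\theta}\in H^2$, formula \eqref{def_h_f} rewrites as $H_\varphi(f) = \sum_{k\le 0} b_k e^{ik\theta}$ with $b_k = \sum_{j\ge 0} c_{k-j} a_j$, and a reindexing shows $\sum_{k\le 0} c_{k-j} = S_j$ for each $j\ge 0$.

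For the sufficient direction, bound $\|H_\varphi(f)\|_\infty$ via $\BA_1(\T)\subset L^\infty(\T)$, Fubini and Cauchy--Schwarz:
\[
\|H_\varphi(f)\|_\infty \le \sum_{k\le 0}|b_k| \le \sum_{j\ge 0}|a_j|\sum_{k\le 0} c_{k-j} = \sum_{j\ge 0}|a_j| S_j \le \|f\|_{H^2}\Bigl(\sum_{j\ge 0} S_j^2\Bigr)^{1/2}.
\]

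For the necessary direction, suppose $\|H_\varphi\|_{H^2\to L^\infty} = C <\infty$. The crucial rigidity tool is that for any $h\in L^\infty(\T)$ of the form $h = \sum_{k\le 0} b_k e^{ik\theta}$ with $b_k\ge 0$, one has $\sum_{k\le 0} b_k \le \|h\|_\infty$: indeed, the analytic function $F(w) := \sum_{n\ge 0} b_{-n} w^n$ agrees with the Poisson extension of $h$ under $w = r e^{-i\theta}$, so $\sup_{|w|<1} |F(w)| \le \|h\|_\infty$, and $F(r)\uparrow \sum_{n\ge 0} b_{-n}$ as $r\to 1^-$ by monotone convergence. Applying this first to $H_\varphi(e^{ij\theta})$ shows $S_j \le C$ for every $j\ge 0$; next, testing against the Hardy function $f_N := \sum_{j=0}^N S_j e^{ij\theta}$, whose image $H_\varphi(f_N)$ has non-negative Fourier coefficients $b_k$ with total sum $\sum_{j=0}^N S_j^2$, yields
\[
\sum_{j=0}^N S_j^2 \le \|H_\varphi(f_N)\|_\infty \le C\|f_N\|_{H^2} = C\Bigl(\sum_{j=0}^N S_j^2\Bigr)^{1/2},
\]
so $\sum_{j\ge 0} S_j^2 \le C^2 < \infty$.

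The main obstacle is the necessity half, and it is precisely here that the positivity hypothesis on the Fourier coefficients of $R_-(\varphi)$ is indispensable, being used twice: once to ensure that the test function $f_N$ (itself built from the positive quantities $S_j$) produces an output with one-sidedly non-negative spectrum, and once inside the $H^\infty(\D)$-evaluation step which converts the $L^\infty$ norm into the linear functional $\sum b_k$. Without this sign condition on the $c_m$, both steps collapse.
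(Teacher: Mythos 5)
Your proof is correct but attacks the necessity half by a genuinely different mechanism. The paper's proof goes through an exact computation of the operator norm: using the pointwise Cauchy--Schwarz extremal and swapping suprema, it establishes the identity
\[
\|H_\varphi\|_{B(H^2, L^\infty)} = \sup_{\theta\in\T}\left(\sum_{n=0}^\infty \left|\sum_{m\le -n}\widehat{\varphi}(m)\,e^{im\theta}\right|^2\right)^{1/2},
\]
and then invokes the sign hypothesis only once, through the triangle inequality, to see that the supremum over $\theta$ is attained at $\theta=0$; this yields both implications and the explicit norm $\big(\sum_n(\sum_{m\le-n}\widehat{\varphi}(m))^2\big)^{1/2}$ in a single stroke. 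Your sufficiency estimate via the embedding $\BA_1(\T)\subset L^\infty(\T)$ is essentially the same Cauchy--Schwarz manipulation and recovers the same constant. Where you diverge is necessity: instead of identifying the extremal angle, you reduce the $L^\infty$ bound to the scalar inequality $\sum_k b_k\le\|h\|_\infty$ for co-analytic $h\in L^\infty$ with non-negative coefficients (a Poisson/Abel monotone-convergence argument), first to obtain $S_j\le\|H_\varphi\|$ by feeding in monomials, and then to close the estimate self-referentially by testing against $f_N=\sum_{j\le N}S_j\,e^{ij\theta}$, giving $\sum_{j\le N}S_j^2\le\|H_\varphi\|\,(\sum_{j\le N}S_j^2)^{1/2}$. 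The paper's route is shorter and displays the exact norm directly; yours avoids the supremum swap and the extremal-point identification, and makes explicit the two separate places where positivity is used.
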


\begin{proposition}\label{prop_2_inf}
 A necessary condition for  $H_\varphi \in B(H^2, L^\infty)$ is $R_{-}(\varphi) \in \BH^{\frac{1}{2}}(\T)$. A sufficient condition for  $H_\varphi \in B(H^2, \BA_1) \subset B(H^2, L^\infty)$ is $R_{-}(\varphi)\in  \BH^{1}(\T)$. 
\end{proposition}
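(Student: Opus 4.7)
The plan is to reduce everything to the sequence of Fourier coefficients of $R_{-}(\varphi)$ and to check the two bounds by rather different mechanisms: duality plus Fejér testing for the necessary direction, and the classical discrete Hardy inequality for the sufficient direction. Set $\psi := R_{-}(\varphi) = \sum_{n \ge 0} c_n e^{-in\theta}$ with $c_n := \widehat{\varphi}(-n)$. A direct Fourier computation shows that for $f = \sum_{k \ge 1} a_k e^{ik\theta} \in H_0^2(\T)$,
\[
H_\varphi(f)(e^{i\theta}) = \sum_{\ell \ge 0} d_\ell\, e^{-i\ell\theta},\qquad d_\ell = \sum_{k \ge 1} c_{\ell+k}\, a_k,
\]
so only $(c_n)_{n \ge 1}$ enters $H_\varphi$.

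For the \textbf{necessary condition}, suppose $\|H_\varphi(f)\|_{L^\infty} \le C \|f\|_{L^2}$ for every $f \in H_0^2$. The bilinear pairing with $g \in L^1(\T)$ gives $|\int H_\varphi(f)\bar g\, dm| \le C\|f\|_2 \|g\|_1$, which by a computation in Fourier coefficients is equivalent to the inequality
\[
\Big(\sum_{k \ge 1}\Big|\sum_{\ell \ge 0} c_{\ell+k}\, \overline{\widehat{g}(-\ell)}\Big|^2\Big)^{1/2} \le C\, \|g\|_{L^1}.
\]
I would test with $g = F_N(\cdot - \theta_0)$, the Fejér kernel of degree $N$ centered at $\theta_0 \in \T$; it has $L^1$-norm $1$ and $\widehat{g}(-\ell) = (1 - |\ell|/(N+1))_+\, e^{i\ell\theta_0}$. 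Substituting, squaring, integrating over $\theta_0 \in \T$, and applying Parseval in the variable $\theta_0$ yields
\[
\sum_{\ell \ge 0} \Big(1 - \tfrac{|\ell|}{N+1}\Big)_+^2 \sum_{n > \ell} |c_n|^2 \;\le\; C^2.
\]
Letting $N \to \infty$ by monotone convergence and using the identity $\sum_{\ell \ge 0}\sum_{n > \ell} |c_n|^2 = \sum_{n \ge 1} n |c_n|^2$ gives $\sum_{n \ge 1} n |c_n|^2 \le C^2$, which together with $|c_0| = |\widehat\varphi(0)| < \infty$ is exactly $R_{-}(\varphi) \in \mathbf{H}^{1/2}(\T)$.

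For the \textbf{sufficient condition}, assume $R_{-}(\varphi) \in \mathbf{H}^1(\T)$, i.e., $\sum_{n \ge 1} n^2 |c_n|^2 < \infty$. The triangle inequality and an interchange of summation give
\[
\sum_{\ell \ge 0} |d_\ell| \;\le\; \sum_{\ell \ge 0}\sum_{n > \ell} |c_n|\,|a_{n-\ell}| \;=\; \sum_{n \ge 1} |c_n|\sum_{k=1}^n |a_k|.
\]
Writing $|c_n|\sum_{k=1}^n |a_k| = (n|c_n|)\cdot\big(\tfrac1n\sum_{k=1}^n |a_k|\big)$ and applying Cauchy-Schwarz produces
\[
\sum_{\ell \ge 0}|d_\ell| \;\le\; \Big(\sum_{n \ge 1} n^2|c_n|^2\Big)^{1/2} \Big(\sum_{n \ge 1}\Big(\tfrac1n\sum_{k=1}^n |a_k|\Big)^{\!2}\Big)^{1/2} \;\le\; 2\,\|R_{-}(\varphi)\|_{\mathbf{H}^1}\, \|f\|_{L^2},
\]
where the final step is the classical discrete Hardy inequality (with sharp constant $4$) applied to $(|a_k|)_{k \ge 1}$. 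This shows $\|H_\varphi(f)\|_{\mathbf{A}_1} \le 2\|R_{-}(\varphi)\|_{\mathbf{H}^1}\|f\|_{L^2}$, and the embedding $\mathbf{A}_1 \subset L^\infty$ (trivially with norm $1$) finishes the argument.

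The main obstacle, or rather the step that forces the numerology, is the application of the Hardy inequality in the sufficient direction: it is precisely Hardy that lets the full derivative on $\psi$ (the weight $n|c_n|$) be absorbed against Cesàro averages of $(|a_k|)$, converting them back into $\|f\|_{L^2}$. This is why the Sobolev exponent $1$ is natural here and cannot easily be pushed below $1$ by a direct Cauchy-Schwarz. In the necessary direction, the sole subtlety is using an $L^1$-approximation of a Dirac mass (the Fejér kernel) instead of a true point mass, and then averaging over the base point $\theta_0$ — this averaging step is what upgrades a pointwise bound to the integrated (and sharp) $\mathbf{H}^{1/2}$ constraint.
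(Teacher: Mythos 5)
Your proof is correct, and both directions land on the same estimates as the paper, though the necessary direction travels a somewhat longer road. For the sufficient condition, you and the paper do essentially the same thing: after $\|H_\varphi f\|_{\BA_1}\le\sum_{n\ge 1}|c_n|\sum_{k=1}^{n}|a_k|$, you Cauchy--Schwarz with weight $n$ and invoke the classical discrete Hardy inequality for the Ces\`aro means; the paper weights by $(1+n^2)^{1/2}$, isolates the quantity $\sum_n(1+n^2)^{-1}\big(\sum_{k\le n}|a_k|\big)^2$ in Lemma~\ref{lem_HLP}, and bounds it via the Hardy--Littlewood--P\'olya inequality~\eqref{HLP-ineq}. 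These are the same bound written in two equivalent forms of Hardy's inequality. For the necessary condition, the paper exploits the exact operator-norm identity
\[
\|H_\varphi\|_{B(H^2,L^\infty)}=\sup_{\theta\in\T}\Bigl(\sum_{n\ge 0}\Bigl|\sum_{m\le -n}\widehat\varphi(m)e^{im\theta}\Bigr|^2\Bigr)^{1/2},
\]
already derived in the proof of Proposition~\ref{prop_pos_coef} by maximizing over $f$ at each fixed $\theta$; since the right-hand side is uniformly bounded in $\theta$, integrating and applying Parseval immediately yields $\sum_{m}(1+|m|)|\widehat\varphi(m)|^2<\infty$. Your route --- dualize against $L^1$, test with translated Fej\'er kernels, integrate in the shift $\theta_0$, Parseval in $\theta_0$, then send the degree $N\to\infty$ --- reaches exactly the same conclusion $\sum_n n|c_n|^2\le C^2$, but the Fej\'er smoothing and the limit are an approximation detour around a pointwise-in-$\theta$ bound that the paper obtains exactly. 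The two arguments are equivalent in substance; the paper's is shorter because it reuses the norm formula, while yours has the minor virtue of not depending on the previous proposition. One small bookkeeping remark: you work throughout with $f\in H_0^2$, which matches the domain in~\eqref{def_hankel}; if one also wants to allow a constant term $a_0$ (as the paper's formula~\eqref{hankel_norm} implicitly does), the extra contribution is a scalar multiple of $R_{-}(\varphi)$, which is harmless since $R_{-}(\varphi)\in\BH^1\subset\BA_1$ under the sufficiency hypothesis.
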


The proof of the sufficient condition in  Proposition \ref{prop_2_inf} is based on the following Hilbert inequality or Hardy-Littlewood-P\'olya inequality  (cf. \cite[Theorem 341, p. 254]{Hardy-Littlewood-Polya}):
\begin{align}\label{HLP-ineq}
\Big| \sum_{m, n = 1}^\infty \frac{a_m b_n}{\max (m, n)} \Big| \le 4  \Big( \sum_{m=1}^\infty | a_m|^2\Big)^{1/2}\Big( \sum_{n=1}^\infty | b_n|^2\Big)^{1/2}. 
\end{align}

\section{HPD and HP functions}\label{sec_strongpdf}

 The main goal of this section is to prove Theorem \ref{thm_classification} on  the criterion of HPD functions on $\N$ and Theorem \ref{thm_class_bis}  on the criterion of HP functions on $\T$. 

\subsection{Basic properties for HPD functions}
Fix an integer $q\ge 2$ and set
\begin{align*}
\HH\mathcal{P}_\N(q): = \left\{ \alpha: \N \rightarrow \C \Big|  \text{$\alpha$ is  $q$-HPD function}\right\}. 
\end{align*}
 Clearly, from the definition of HPD functions, $\HH\mathcal{P}_\N(q)$ is a {\it closed positive cone}. That is, $\HH\mathcal{P}_\N(q)$ is closed under the operation of taking pointwise limit and the operation of taking  the linear combinations with positive-coefficients.    Other basic properties for HPD functions are obtained in the following Lemmas \ref{lem_mod} and \ref{lem_decay}.  

\begin{lemma}[Modulation invariance of HPD functions]\label{lem_mod}
Assume that $\alpha \in \HP_\N(q)$. Then for any $t \in \R$,  the function $\alpha_t: \N \rightarrow \C$ defined by 
\begin{align}\label{mod_inv}
\alpha_t(n) = e^{-i n t} \alpha(n), \quad n \in \N,
\end{align}
is again in $\HP_{\N}(q)$. Therefore, for any positive Radon measure $\nu$ on $\T$, the function $\alpha_\nu: \N \rightarrow \C$ defined by 
\begin{align}\label{Fourier_inv}
\alpha_\nu(n) = \widehat{\nu}(n) \alpha(n), \quad n \in \N,
\end{align}
is also in $\HP_\N(q)$.
\end{lemma}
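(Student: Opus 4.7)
The plan is to prove \eqref{mod_inv} first by a direct covariance-preserving modification of a branching-type SSP realising $\alpha$, and then to deduce \eqref{Fourier_inv} by a tensor-product construction with an auxiliary classical one-dimensional SSP whose spectral measure is $\nu$.

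For \eqref{mod_inv}, by Lemma \ref{lem-HPD} pick a branching-type SSP $(X_\sigma)_{\sigma\in\F_q^{+}}$ realising $\alpha$, and set
\[
Y_\sigma := e^{i|\sigma|t}\, X_\sigma, \qquad \sigma \in \F_q^{+}.
\]
Then $\E(Y_\sigma\overline{Y_\delta}) = e^{i(|\sigma|-|\delta|)t}\,\TT_\alpha^{(\F_q^{+})}(\sigma,\delta)$, and a case-by-case check on the three alternatives in \eqref{def_kernel_a} shows that this equals $\TT_{\alpha_t}^{(\F_q^{+})}(\sigma,\delta)$: when $\delta = \sigma w$ the prefactor is $e^{-i|w|t}$, giving $e^{-i|w|t}\alpha(|w|) = \alpha_t(|w|)$; the opposite orientation produces the conjugate relation, and non-comparable pairs give zero on both sides. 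Hence $\alpha_t \in \HP_\N(q)$ by Lemma \ref{lem-HPD}.

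For \eqref{Fourier_inv}, the classical Herglotz-Bochner Theorem furnishes, on an auxiliary probability space independent of $(X_\sigma)$, a mean-zero classical SSP $(Z_n)_{n\in\N}$ on $\N$ with spectral measure $\nu$, so that $\E(Z_n\overline{Z_k}) = \widehat{\nu}(n-k)$ by \eqref{def-spec}. On the product probability space set
\[
W_\sigma := X_\sigma\,\overline{Z_{|\sigma|}}, \qquad \sigma \in \F_q^{+}.
\]
Independence yields $\E(W_\sigma\overline{W_\delta}) = \TT_\alpha^{(\F_q^{+})}(\sigma,\delta)\cdot\widehat{\nu}(|\delta|-|\sigma|)$, and for $\delta = \sigma w$ this equals $\alpha(|w|)\widehat{\nu}(|w|) = \alpha_\nu(|w|)$; the remaining two cases agree with $\TT_{\alpha_\nu}^{(\F_q^{+})}$ similarly, and non-comparable pairs again vanish. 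Invoking Lemma \ref{lem-HPD} once more gives $\alpha_\nu \in \HP_\N(q)$.

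There is no substantial obstacle: the whole argument reduces to a direct covariance computation once Lemma \ref{lem-HPD} and the classical Herglotz-Bochner Theorem are available. A variant avoiding any stochastic construction in the second half is to approximate $\nu$ weak-$*$ by finite positive discrete measures $\nu_k = \sum_j c_j^{(k)}\delta_{t_j^{(k)}}$, observe that $\alpha_{\nu_k}(n) = \sum_j c_j^{(k)}\alpha_{t_j^{(k)}}(n)$ is a finite positive combination of modulations (hence in $\HP_\N(q)$ by \eqref{mod_inv}), and exploit that $\HP_\N(q)$ is a closed positive cone together with the pointwise convergence $\widehat{\nu_k}(n)\to\widehat{\nu}(n)$ on $\N$.
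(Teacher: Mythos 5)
Your first half coincides with the paper's proof: both set $Y_\sigma = e^{i|\sigma|t} X_\sigma$ and read off that the covariance is $\TT_{\alpha_t}^{(\F_q^{+})}$. For the second half, the paper does \emph{not} introduce a tensor-product process; it simply writes $\alpha_\nu = \int_\T \alpha_t\, d\nu(t)$ and invokes the fact that $\HP_\N(q)$ is a closed positive cone. Your ``variant'' at the end (approximate $\nu$ weak-$*$ by finite positive combinations of Diracs and use the closed-cone property) is, in substance, exactly the paper's argument. Your tensor-product construction is a genuinely different, self-contained route, and it is a nice one since it never mentions convexity or closedness of $\HP_\N(q)$; it also makes the probabilistic meaning of \eqref{Fourier_inv} transparent.

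However, the tensor construction as written has a conjugation error. With the paper's convention \eqref{def-spec}, one has $\E(Z_n \overline{Z_m}) = \widehat{\nu}(m - n)$, not $\widehat{\nu}(n-m)$. Consequently, with $W_\sigma := X_\sigma \overline{Z_{|\sigma|}}$ and $\delta = \sigma w$, independence gives
\[
\E\big(W_\sigma \overline{W_\delta}\big) = \E\big(X_\sigma \overline{X_\delta}\big)\cdot \overline{\E\big(Z_{|\sigma|}\overline{Z_{|\delta|}}\big)} = \alpha(|w|)\,\overline{\widehat{\nu}(|w|)},
\]
which equals $\alpha_{\check\nu}(|w|)$ (for the reflected measure $\check\nu$), not $\alpha_\nu(|w|)$, unless $\nu$ is symmetric. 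The clean fix is to drop the conjugate and set $W_\sigma := X_\sigma\, Z_{|\sigma|}$; then $\E(W_\sigma\overline{W_\delta}) = \TT_\alpha^{(\F_q^{+})}(\sigma,\delta)\,\widehat{\nu}(|\delta|-|\sigma|)$, which for $\delta=\sigma w$ is $\alpha(|w|)\widehat{\nu}(|w|)=\alpha_\nu(|w|)$, for $\sigma = \delta w$ is its conjugate, and vanishes on non-comparable pairs. (Alternatively, one may keep your definition and observe that it realises $\alpha_{\check\nu}$, and $\nu\mapsto\check\nu$ is a bijection on positive Radon measures.) With this adjustment the argument is complete, and the final appeal to Lemma \ref{lem-HPD} is exactly right: the covariance kernel of the mean-zero $L^2$ process $(W_\sigma)$ is automatically positive definite, hence $\TT_{\alpha_\nu}^{(\F_q^{+})}$ is positive definite and $\alpha_\nu\in\HP_\N(q)$.
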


\begin{lemma}\label{lem_decay}
Assume that $\alpha \in \HP_\N(q)$. Then the sequence $(q^{n/2} \alpha(n))_{n\in \N}$ is positive definite on $\N$. In particular, there exists a positive Radon measure $\mu$ on $\T$ such that  
\[
\alpha(n ) = q^{-n/2} \widehat{\mu}(n) =  q^{-n/2} \cdot\int_\T  e^{-i  n \theta} d \mu(\theta), \quad n \in \N.
\]
\end{lemma}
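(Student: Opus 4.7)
The plan is to distill from the branching-type SSP realizing $\alpha$ a one-dimensional SSP on $\N$ whose autocovariance is exactly $n \mapsto q^{n/2}\alpha(n)$; then classical Herglotz--Bochner delivers the measure $\mu$ for free. The whole computation turns on one clever linear combination, whose normalization makes the level-wise growth $q^n$ interact correctly with the exponential decay built into $\alpha$.

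First, apply Lemma \ref{lem-HPD} to fix a branching-type SSP $(X_\sigma)_{\sigma\in \F_q^{+}}$ with $\E(X_\sigma \overline{X_\delta}) = \TT_\alpha^{(\F_q^{+})}(\sigma,\delta)$. For each integer $n \ge 0$, write $L_n := \{\sigma \in \F_q^{+} : |\sigma|=n\}$, so that $|L_n|=q^n$, and define the renormalized level-sums
\[
Z_n \;:=\; q^{-n/2} \sum_{\sigma \in L_n} X_\sigma, \qquad n \in \N.
\]
Then $(Z_n)_{n\in \N}$ is a mean-zero, square-integrable process on $\N$.

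Next, compute its covariances. For $n\ge 0$ and $m \ge 0$,
\[
\E\bigl(Z_n \overline{Z_{n+m}}\bigr) \;=\; q^{-(2n+m)/2} \!\!\sum_{\sigma \in L_n,\, \tau \in L_{n+m}} \!\!\TT_\alpha^{(\F_q^{+})}(\sigma,\tau).
\]
Since $|\sigma|\le |\tau|$, the kernel $\TT_\alpha^{(\F_q^{+})}(\sigma,\tau)$ vanishes unless $\sigma \preceq \tau$, in which case $\tau=\sigma w$ for a unique $w\in \F_q^{+}$ with $|w|=m$ and the value is $\alpha(m)$. For each fixed $\sigma \in L_n$ there are exactly $q^m$ such $\tau$. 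Hence
\[
\E\bigl(Z_n \overline{Z_{n+m}}\bigr) \;=\; q^{-(2n+m)/2}\cdot q^n \cdot q^m \cdot \alpha(m) \;=\; q^{m/2}\alpha(m),
\]
which is independent of $n$. Therefore $(Z_n)_{n\in \N}$ is a classical weak stationary stochastic process on $\N$, and its autocovariance function is precisely $m \mapsto q^{m/2}\alpha(m)$. By definition, this means the function $\N \ni n \mapsto q^{n/2}\alpha(n)$ is positive definite on $\N$. The existence (and uniqueness) of a positive Radon measure $\mu$ on $\T$ with $\widehat{\mu}(n) = q^{n/2}\alpha(n)$ is then the classical Herglotz--Bochner theorem applied to this SSP.

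There is no substantial obstacle: the entire proof rests on guessing the correct linear combinations $Z_n$. The normalization factor $q^{-n/2}$ is forced by the requirement that the $q^n$-fold redundancy in $L_n$ and the $q^m$-fold redundancy among comparable descendants combine, after division, into the stationary-in-$n$ expression $q^{m/2}\alpha(m)$; any other power of $q$ would either destroy stationarity or fail to reveal positive definiteness at the scale $q^{n/2}\alpha(n)$.
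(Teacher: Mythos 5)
Your proof is correct and follows essentially the same approach as the paper: the paper defines the same level-averaged process $\Theta_n = q^{-n/2}\sum_{|\sigma|=n} X_\sigma$ (Lemma \ref{lem_sp_av}), verifies the identical covariance identity $\Cov(\Theta_n,\Theta_{n+k})=q^{k/2}\alpha(k)$, and then invokes Herglotz--Bochner.
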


\begin{proof}[Proof of Lemma \ref{lem_mod}]
By Lemma \ref{lem-HPD}, there exists a centered stochastic process $(X_\sigma)_{\sigma \in \F_q^{+}}$ satisfying \eqref{GF_K}. 
Fix any $t\in \R$,  consider a  new stochastic process $(Z_\sigma)_{\sigma\in \F_q^{+}}$ defined by 
\[
Z_\sigma = e^{i | \sigma| t} X_\sigma, \quad \sigma \in \F_q^{+}. 
\]
A simple computation shows that 
\[
\Cov(Z_\sigma, Z_\delta) =  \TT_{\alpha_t}^{(\F_q^{+})} (\sigma, \delta) = \left\{
\begin{array}{ll}
e^{-i |w|t} \cdot \alpha(|w|) = \alpha_t(|w|); & \text{if $\delta = \sigma w$ for some $w \in \F_q^{+}$} \vspace{2mm}
\\
e^{i |w|t} \cdot \overline{\alpha(|w|)} = \overline{\alpha_t(|w|)}; & \text{if $\sigma = \delta w$ for some $w \in \F_q^{+}$} \vspace{2mm}
\\
0; & \text{otherwise}
\end{array}
\right..
\]
Note that for obtaining the above description of $\Cov(Z_\sigma, Z_\delta)$,  we have used the following two facts: 
\begin{itemize}
\item  the assumption  that $\TT_\alpha^{(\F_q^{+})}(\sigma, \delta) = 0$  if $\sigma, \delta$ are not comparable; 
\item if $\sigma, \delta$ are comparable, then if $\delta = \sigma w$, we have $| \delta|  = | \sigma| + |w|$ or if $\sigma = \delta w$, we have $|\sigma| = |\delta| + |w|$. 
\end{itemize}
Clearly, the equality $\TT_{\alpha_t}^{(\F_q^{+})}(\sigma, \delta) = \Cov(Z_\sigma, Z_\delta)$ for all $\sigma, \delta \in \F_q^{+}$ implies that $\TT_{\alpha_t}^{(\F_q^{+})}$ is a positive definite kernel on $\F_q^{+}$. Therefore, by definition, $\alpha_t$ is $q$-HPD. 

From the definition \eqref{Fourier_inv} of $\alpha_\nu$, we have 
\[
\alpha_\nu = \int_\T \alpha_t d\nu(t).
\]
Therefore, the assertion $\alpha_\nu\in \HP_\N(q)$  follows immediately from the fact that $\alpha_t\in \HP_\N(q)$ for any $t\in \R$ and the fact that $\HP_\N(q)$ is a closed positive cone.
\end{proof}

For proving Lemma \ref{lem_decay}, we need to construct a new stochastic process on $\N$ as follows. Let $(X_\sigma)_{\sigma\in \F_q^{+}}$ be  a centered stochastic process  on $\F_q^{+}$ satisfying \eqref{GF_K}. Consider now a centered stochastic process $(\Theta_n)_{n\in \N}$ on $\N$ defined by 
\begin{align}\label{def_theta}
\Theta_n: = \frac{1}{q^{n/2}}\sum_{\sigma\in \F_q^{+} \atop |\sigma| = n} X_\sigma.
\end{align}

\begin{lemma}\label{lem_sp_av}
The centered stochastic process $(\Theta_n)_{n\in \N}$ defined by \eqref{def_theta} is stationary and 
\begin{align}\label{cov_Y}
\Cov(\Theta_n, \Theta_{n+k}) = q^{k/2}\alpha(k), \quad n, k \in \N.
\end{align}
\end{lemma}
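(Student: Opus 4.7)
\textbf{Proof plan for Lemma \ref{lem_sp_av}.} The strategy is a direct covariance computation that exploits two features of the branching-Toeplitz structure \eqref{def_kernel_a}: the vanishing of $\TT_\alpha^{(\F_q^{+})}(\sigma,\delta)$ on non-comparable pairs, and the exact counting available in the homogeneous tree $\F_q^{+}$. Centering of $(\Theta_n)_{n\in\N}$ is immediate since each $X_\sigma$ is centered, so only the covariance identity \eqref{cov_Y} requires work; stationarity will follow from \eqref{cov_Y} once the right-hand side is seen to depend only on $k$.

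First I would fix $n,k\in\N$ and expand
\[
\Cov(\Theta_n,\Theta_{n+k})
= q^{-n/2}\,q^{-(n+k)/2}\sum_{|\sigma|=n}\sum_{|\delta|=n+k}\Cov(X_\sigma,X_\delta),
\]
and invoke \eqref{GF_K} to replace $\Cov(X_\sigma,X_\delta)$ by $\TT_\alpha^{(\F_q^{+})}(\sigma,\delta)$. The key observation is that when $|\sigma|=n\le n+k=|\delta|$, the only way for $\sigma$ and $\delta$ to be comparable in $\F_q^{+}$ is to have $\delta=\sigma w$ for some $w\in\F_q^{+}$ with necessarily $|w|=|\delta|-|\sigma|=k$; in that case the kernel equals $\alpha(k)$, and otherwise it vanishes. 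Hence the double sum reduces to
\[
\sum_{|\sigma|=n}\sum_{|w|=k}\alpha(k).
\]

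Next I would carry out the counting in the homogeneous setting: there are exactly $q^n$ elements $\sigma\in\F_q^{+}$ of length $n$ and exactly $q^k$ words $w$ of length $k$, so the double sum equals $q^n\cdot q^k\cdot\alpha(k)$. Putting the prefactors back in,
\[
\Cov(\Theta_n,\Theta_{n+k})
= q^{-n/2}\,q^{-(n+k)/2}\cdot q^n\cdot q^k\cdot\alpha(k)
= q^{k/2}\alpha(k),
\]
which is independent of $n$. This simultaneously yields \eqref{cov_Y} and shows that the covariance function $\Cov(\Theta_n,\Theta_{n+k})$ depends only on the lag $k$, proving stationarity.

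There is no real obstacle here: the lemma is essentially a bookkeeping statement, and the only point that deserves care is recognizing that the normalization $q^{-n/2}$ in \eqref{def_theta} is tuned precisely so that the factors $q^n q^k$ coming from the two cardinalities combine with $q^{-n/2-(n+k)/2}$ to leave the clean prefactor $q^{k/2}$. (One may also remark that in this computation we used only $k\ge 0$; the case $k<0$ follows by complex conjugation together with the convention $\alpha(-n)=\overline{\alpha(n)}$.)
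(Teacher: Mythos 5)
Your proof is correct and follows essentially the same route as the paper: expand the covariance bilinearly, use the vanishing of the branching-Toeplitz kernel on non-comparable pairs to restrict the sum to $\delta=\sigma w$ with $|w|=k$, count $q^n$ choices of $\sigma$ and $q^k$ choices of $w$, and simplify the prefactors. The only cosmetic difference is your closing remark about $k<0$, which is not needed since the lemma is stated for $n,k\in\N$.
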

\begin{proof}
Recall that if $\sigma, \delta$ are not comparable, then $\E(X_\sigma \overline{X_\delta}) = K_\alpha(\sigma, \delta) = 0$. For any $n, k \in \N$, we have 
\begin{align*}
 \Cov(\Theta_n, \Theta_{n+k})=&   \E(\Theta_n \overline{\Theta_{n+k}}) 
=   \frac{1}{q^{n/2}} \cdot \frac{1}{q^{(n+k)/2}}\sum_{\sigma, \delta \in \F_q^{+} \atop |\sigma| =n, |\delta| = n+k} \E (X_\sigma \overline{X_\delta})
\\
= &     \frac{1}{q^{n/2}} \cdot \frac{1}{q^{(n+k)/2}}\sum_{\sigma\in \F_q^{+} \atop |\sigma|=n} \sum_{w\in \F_q^{+}\atop |w| = k} \E(X_\sigma \overline{X_{\sigma w}}) 
=  \frac{1}{q^{n/2}} \cdot \frac{1}{q^{(n+k)/2}}\sum_{\sigma\in \F_q^{+} \atop |\sigma|=n} \sum_{w\in \F_q^{+}\atop |w| = k}  \alpha(k) 
\\
 = &  \frac{q^n \cdot q^k}{ q^{n/2} \cdot q^{(n+k)/2}} \alpha(k) = q^{k/2} \alpha(k). 
\end{align*}
This completes the proof of \eqref{cov_Y}. Note that the stationarity  follows from \eqref{cov_Y}. 
\end{proof}

\begin{proof}[Proof of Lemma \ref{lem_decay}]
Lemma \ref{lem_sp_av}, combined with the classical Herglotz-Bochner Theorem on positive definite functions on $\Z$ or on $\N$ (cf.  Bochner \cite[Theorem 3.2.3]{Bochner1955}), implies that there exists a positive Radon measure $\mu$ on $\T$ such that 
\[
q^{k/2} \alpha(k) = \widehat{\mu}(k) = \int_\T e^{-ik \theta} d\mu(\theta), \quad k \in \N. 
\]
This completes the  proof of Lemma \ref{lem_decay}. 
\end{proof}

\subsection{The first proof of Lemma \ref{prop_Bozej}: Extension of positive definite kernels}
Our first proof of Lemma \ref{prop_Bozej} relies on a beautiful result of Bo\.{z}ejko \cite{Bozejko-89} about the construction of positive definite kernels on union of two sets decribed as follows. 

Let $K_i: \Sigma_i \times \Sigma_i \rightarrow \C$ be a kernel on a set $\Sigma_i$ ($i = 1, 2$). Assume that the intersection $\Sigma_1 \cap \Sigma_2 = \{x_0\}$ is a singleton and $K_1(x_0, x_0) = K_2(x_0, x_0)  = 1$. The {\it Markov product} of $K_1$ and $K_2$, denoted by $K_1*_{x_0} K_2$,  is a kernel $K$ on the union $\Sigma_1 \cup \Sigma_2$, defined by 
\begin{itemize}
\item $K|_{\Sigma_i\times \Sigma_i} = K_i ( i = 1, 2);$
\item (Markov property) For $\sigma_i \in \Sigma_i(i = 1, 2)$, 
\[
K(\sigma_1, \sigma_2)= K_1(\sigma_1, x_0) K_2(x_0, \sigma_2),  \quad K(\sigma_2, \sigma_1)= \overline{K(\sigma_1, \sigma_2)}.
\]
\end{itemize}

\begin{theorem}[{Bo\.{z}ejko \cite[Theorem 4.1]{Bozejko-89}}]\label{thm_Bozej}
Let $\Sigma_1, \Sigma_2$ be two sets such that the intersection $\Sigma_1 \cap \Sigma_2 = \{x_0\}$ is a singleton. 
Let  $K_1, K_2$ be two positive definite kernels on $\Sigma_1, \Sigma_2$  respectively. Assume that 
\[
K_i (\sigma_i, \sigma_i) = 1 \quad \text{for all $\sigma_i \in \Sigma_i \, (i = 1, 2)$.}
\]
Then the Markov product $K_1*_{x_0}K_2$ of $K_1$ and $K_2$ is also a positive definite kernel. 
\end{theorem}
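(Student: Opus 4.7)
The plan is to exhibit $K := K_1 *_{x_0} K_2$ as a Gram kernel on an amalgamated Hilbert space, whence positive definiteness is automatic. By the Kolmogorov dilation applied to each $K_i$, there exist a Hilbert space $H_i$ and a map $\phi_i: \Sigma_i \to H_i$ with $K_i(\sigma, \tau) = \langle \phi_i(\sigma), \phi_i(\tau) \rangle_{H_i}$; the normalization $K_i(x_0, x_0) = 1$ makes $\phi_i(x_0)$ a unit vector, and decomposing $H_i = \C \phi_i(x_0) \oplus H_i'$ writes $\phi_i(\sigma) = K_i(\sigma, x_0) \phi_i(x_0) + \phi_i'(\sigma)$ with $\phi_i'(\sigma) \in H_i'$ and $\phi_i'(x_0) = 0$. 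The key step is to form the amalgamation $H := \C e_0 \oplus H_1' \oplus H_2'$ (orthogonal direct sum, with $e_0$ a designated unit vector) and define
\[
\phi(\sigma) := K_i(\sigma, x_0)\, e_0 + \phi_i'(\sigma) \in H, \qquad \sigma \in \Sigma_i,\ i = 1, 2,
\]
which is unambiguous at $\sigma = x_0$, since $\phi_i'(x_0) = 0$ and $K_i(x_0, x_0) = 1$ force $\phi(x_0) = e_0$ in both branches.

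Then I would verify $\langle \phi(\sigma), \phi(\tau) \rangle_H = K(\sigma, \tau)$ in the two relevant cases. For $\sigma, \tau$ lying in the same piece $\Sigma_i$, the orthogonal decomposition inside $H_i$ gives
\[
\langle \phi(\sigma), \phi(\tau) \rangle_H = K_i(\sigma, x_0) \overline{K_i(\tau, x_0)} + \langle \phi_i'(\sigma), \phi_i'(\tau) \rangle_{H_i} = \langle \phi_i(\sigma), \phi_i(\tau) \rangle_{H_i} = K_i(\sigma, \tau),
\]
matching $K|_{\Sigma_i \times \Sigma_i}$. For $\sigma_1 \in \Sigma_1$ and $\sigma_2 \in \Sigma_2$, the relation $H_1' \perp H_2'$ inside $H$ kills the cross term, leaving
\[
\langle \phi(\sigma_1), \phi(\sigma_2) \rangle_H = K_1(\sigma_1, x_0) \overline{K_2(\sigma_2, x_0)} = K_1(\sigma_1, x_0) K_2(x_0, \sigma_2),
\]
precisely the Markov product value. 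The positive definiteness of $K$ then follows immediately from the Gram representation $K(\sigma, \tau) = \langle \phi(\sigma), \phi(\tau) \rangle_H$. There is no substantive obstacle here; the only delicate point, the consistency of $\phi$ at the shared vertex $x_0$, is automatic from the orthogonal splitting, and this is precisely the structural role played by the normalization hypothesis $K_i(x_0, x_0) = 1$.
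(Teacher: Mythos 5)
The paper does not supply its own proof of this theorem; it is quoted from Bo\.{z}ejko \cite[Theorem~4.1]{Bozejko-89}, with \cite{Q-extension} cited for an alternative proof. So there is no in-paper argument to compare against. Judged on its own terms, your proof is correct and complete: you take Kolmogorov (GNS) dilations $\phi_i\colon\Sigma_i\to H_i$ for $K_1$ and $K_2$, use the normalization at $x_0$ to split $H_i=\C\,\phi_i(x_0)\oplus H_i'$ with $\phi_i(\sigma)=K_i(\sigma,x_0)\phi_i(x_0)+\phi_i'(\sigma)$, glue the two dilations along a single line $\C e_0$ in $H=\C e_0\oplus H_1'\oplus H_2'$, and verify that the amalgamated map $\phi$ has Gram kernel exactly $K_1*_{x_0}K_2$; the Hermitian symmetry $K_2(x_0,\sigma_2)=\overline{K_2(\sigma_2,x_0)}$ makes the cross terms come out as the Markov product prescribes. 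This amalgamation-of-reproducing-kernel-spaces argument is the standard Hilbert-space route to this Markov-product theorem and requires no further justification.

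One observation worth making explicit: your argument uses only the normalization at the shared point, $K_i(x_0,x_0)=1$, and not the stronger hypothesis $K_i(\sigma_i,\sigma_i)=1$ for all $\sigma_i\in\Sigma_i$ that appears in the statement. This is not a defect --- a proof under a weaker hypothesis is still valid --- but it does mean you have silently established a slightly more general statement than the one posed. (In fact the pointwise $K_i(x_0,x_0)=1$ condition is already built into the very definition of the Markov product and is, as you say, exactly what makes $\phi(x_0)$ well defined; the full normalization is relevant elsewhere in Bo\.{z}ejko's paper but plays no role in this particular theorem.)
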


\begin{proof}[Proof of Lemma \ref{prop_Bozej}]
We will divide the proof into several steps. We will deal with many restriction of the kernel $\TT_{\beta_q}^{(\F_q^{+})}$ onto subsets of $\F_q^{+}$. For simplifying notation, for any subset $\Sigma\subset \F_q^{+}$, we will denote 
\[
K(\Sigma): = \TT_{\beta_q}^{(\F_q^{+})}|_{\Sigma\times \Sigma}. 
\]

For any $\sigma, \delta \in \F_q^{+}$, we denote $d(\sigma, \delta)$ the distance of between  $\sigma$ and $\delta$ in the Cayley graph of $\F_q^{+}$  and write  $R$ the set of pairs $(\sigma, \delta)\in \F_q^{+}\times \F_q^{+}$ such that  $\sigma, \delta$ are comparable.   Then  by the definition of $\TT_{\beta_q}^{(\F_q^{+})}$ given as in \eqref{def_kernel_a}, we have 
\begin{align}\label{formula_R}
\TT_{\beta_q}^{(\F_q^{+})}(\sigma, \delta) = q^{-d(\sigma, \delta)/2} \cdot \ch_R(\sigma, \delta) =  q^{- d(\sigma, \delta)/2} \mathds{1}(\text{$\sigma$ and $\delta$ are comparable}).
\end{align}

\medskip

The following figure shows the steps of our proof. 

\begin{center}

\begin{tikzpicture}

\filldraw(-12,10) circle (2pt);
\node (e) at (-12-0.2,10+0.1) {$e$} ;
\filldraw(-11.5,9) circle (2pt);
\node (e) at (-11.5+0.2,9+0.1) {$s_1$} ;
\node (e) at (-12.5-0.3,9+0.1) {$s_2$} ;
\node (e) at (-12,8.2) {$\Sigma_e$} ;
\filldraw(-12.5,9) circle (2pt);
\draw (-12,10)--(-11.5,9);
\draw (-12,10)--(-12.5,9);
\node at (-12,10.6) {Step 1 ($q=2$)};

\filldraw(-9,10) circle (2pt);
\node(e) at (-9-0.2,10+0.1) {$e$};
\filldraw(-9.5,9) circle (2pt);
\node(e) at (-9.5-0.2,9+0.1) {$s_1$};
\filldraw(-8.5,9) circle (2pt);
\node(e) at (-8.5+0.2,9+0.1) {$s_2$};
\filldraw(-9,8) circle (2pt);
\node(e) at (-9+0.4,8+0.1) {$s_1^2$};
\filldraw(-10,8) circle (2pt);
\node(e) at (-10-0.4,8+0.1) {$s_1s_2$};
\node (e) at (-9,7.3) {$\Sigma_e\cup \Sigma_{s_1}$} ;
\draw (-9,10)--(-9.5,9);
\draw (-9,10)--(-8.5,9);
\draw [color=red!60, fill=red!5, thick] (-9.5,9)--(-9,8);
\draw [color=red!60, fill=red!5, thick] (-9.5,9)--(-10,8);
\node at (-9,10.6) {Step 2 ($q=2$)};

\filldraw(-6,10) circle (2pt);
\draw(-6,10)--(-6.5,9);
\draw(-6,10)--(-5.5,9);
\filldraw(-6.5,9) circle (2pt);
\draw(-6.5,9)--(-6.8,8);
\draw(-6.5,9)--(-6.2,8);
\filldraw(-5.5,9) circle (2pt);
\draw[color=red!60, fill=red!5, thick](-5.5,9)--(-5.8,8);
\draw[color=red!60, fill=red!5, thick](-5.5,9)--(-5.2,8);
\filldraw(-5.8,8) circle (2pt);
\filldraw(-5.2,8) circle (2pt);

\filldraw(-6.8,8) circle (2pt);
\filldraw(-6.2,8) circle (2pt);

\filldraw(-3.5,10) circle (2pt);
\draw(-3.5,10)--(-3,9);
\draw(-3.5,10)--(-4,9);
\filldraw(-3,9) circle (2pt);
\draw(-3,9)--(-3.3,8);
\draw(-3,9)--(-2.7,8);
\filldraw(-4,9) circle (2pt);
\draw(-4,9)--(-4.3,8);
\draw(-4,9)--(-3.7,8);
\filldraw(-3.3,8) circle (2pt);
\filldraw(-2.7,8) circle (2pt);
\filldraw(-4.3,8) circle (2pt);
\filldraw(-3.7,8) circle (2pt);
\draw[color=red!60, fill=red!5, thick](-4.3,8)--(-4.5,7);
\draw[color=red!60, fill=red!5, thick](-4.3,8)--(-4.1,7);
\filldraw(-4.5,7) circle (2pt);
\filldraw(-4.1,7) circle (2pt);

\node (e) at (-6,7.3) {$\Sigma_e\cup \Sigma_{s_1}\cup\Sigma_{s_2}$} ;
\node (e) at (-3-0.3,6.5) {$\Sigma_e\cup \Sigma_{s_1}\cup\Sigma_{s_2}\cup \Sigma_{s_1^2}$} ;
\node at (-4.5,10.6) {Step 3 ($q=2$)};

\node at (-0.8,9) {$\cdots\cdots$};
\node at (-6.5, 5.9) {Figure 3: steps of the proof of Lemma \ref{prop_Bozej}};
\end{tikzpicture}
\end{center}

{\flushleft \it Step 1.} We first show that for any $w\in \F_q^{+}$, the kernel $K(\Sigma_w)$ obtained as the restriction of $\TT_{\beta_q}^{(\F_q^{+})}$ on the $(q+1)$-element subset 
\[
\Sigma_w : = \{w, ws_1, ws_2, \cdots, ws_q\}
\]
 is positive definite. Indeed, when $\Sigma_w$ is ordered in the above way, the restricted kernel $K(\Sigma_w)$ is a $(q+1)\times (q+1)$-matrix given by 
\[
A_q = \left[
\begin{array}{cc}
1 &v_q\\
v_q^t&  I_q
\end{array}
\right],
\]
where $I_q$ is the identity matrix of size $q\times q$ and $v_q = (q^{-1/2}, q^{-1/2}, \cdots, q^{-1/2})$ is the $(1 \times q)$-row vector with all coefficients equal $q^{-1/2}$, and $v_q^t$ is the transpose of $v_q$. A simple computation shows that 
\[
A_q = \left[
\begin{array}{cc}
1 &v_q\\
v_q^t&  I_q
\end{array}
\right] =   \left[
\begin{array}{cc}
1 & v_q\\
0 &  I_q
\end{array}
\right]^t \cdot \left[
\begin{array}{cc}
1 &0\\
0&  I_q - v_q^t v_q
\end{array}
\right] \cdot\left[
\begin{array}{cc}
1 &v_q\\
0&  I_q
\end{array}
\right].
\]
Since $v_q^tv_q$ correpsonds to the orthogonal projection onto the one-dimensional subspace $\C v_q \subset \C^q$ spanned by $v_q$, the operator $I_q - v_q^t v_q$ is the orthogonal projection onto the subspace $v_q^\perp \subset \C^q$ of orthogonal complement of $\C v_q$. Therefore, $I_q - v_q^t v_q$ is positive definite and it follows that $A_q$ and thus $K(\Sigma_w)$ is positive definite.

\medskip

{\flushleft \it Step 2.} Note that $\Sigma_e \cap \Sigma_{s_1} = \{s_1\}$ is a singleton, and $K_{\beta_q} (\sigma, \sigma) = 1$ for all $\sigma \in \F_q^{+}$, therefore, we may apply Bo\.{z}ejko's result Theorem \ref{thm_Bozej}. First, we shall show that  the kernel $K(\Sigma_e \cup \Sigma_{s_1})$ is the Markov product of $K(\Sigma_e)$ and $K(\Sigma_{s_1})$ :
\begin{align}\label{markov_0}
K(\Sigma_e\cup \Sigma_{s_1}) = K(\Sigma_e)*_{s_1} K(\Sigma_{s_1}). 
\end{align}
Then by Theorem \ref{thm_Bozej}, the kernel $K(\Sigma_e \cup \Sigma_{s_1})$ is positive definite. 

Indeed, it suffices to verify that for any $\sigma_1 \in \Sigma_e$ and $\sigma_2 \in \Sigma_{s_1}$, we have 
\begin{align}\label{markov_1}
\TT_{\beta_q}^{(\F_q^{+})}(\sigma_1, \sigma_2)= \TT_{\beta_q}^{(\F_q^{+})}(\sigma_1, s_1) \TT_{\beta_q}^{(\F_q^{+})}(s_1, \sigma_2). 
\end{align}
Clearly, from the simple structure of $\Sigma_e$ and $\Sigma_{s_1}$, we have $(\sigma_1, \sigma_2)\in R$ if and only if $(\sigma_1, s_1) \in R$. Hence both sides of \eqref{markov_1} vanish when $(\sigma_1, \sigma_2)\notin R$. Now if $(\sigma_1, \sigma_2) \in R$, then the three elements $\sigma_1, s_1, \sigma_2$ (not necessarily distinct) are mutually comparable and are ordered $\sigma_1 \preceq s_1 \preceq \sigma_2$. Hence \eqref{markov_1} follows from the formula  \eqref{formula_R} and the fact 
\[
d(\sigma_1, \sigma_2) = d(\sigma_1, s_1) + d(s_1, \sigma_2). 
\] 

\medskip

{\flushleft \it Step 3.} The above argument in Step 2 can be repeated to show that 
\[
K(\Sigma_e \cup \Sigma_{s_1} \cup \Sigma_{s_2}) = K(\Sigma_e \cup \Sigma_{s_1})*_{s_2} K(\Sigma_{s_2})
\] and  the positive definiteness of the kernel $K(\Sigma_e \cup \Sigma_{s_1} \cup \Sigma_{s_2})$ then follows from the positive definiteness of $K(\Sigma_e \cup \Sigma_{s_1})$ and $K(\Sigma_{s_2})$  by applying Theorem \ref{thm_Bozej}. This argument can be continuously repeated for proving the positive definiteness of $\TT_{\beta_q}^{(\F_q^{+})}$. 

For clarity, let us explain our strategy in more details. We shall prove that for any subset $\Sigma \subset \F_q^{+}$ and $w\in \F_q^{+}$ such that 
\begin{itemize}
\item $e\in \Sigma$,
\item $\Sigma$ is connected in the Cayley graph of $\F_q^{+}$, 
\item $\Sigma \cap \Sigma_w = \{w\}$,
\end{itemize}
 we have the following Markov property
\begin{align}\label{general_markov}
K(\Sigma \cup \Sigma_w)= K(\Sigma)*_{w} K(\Sigma_w). 
\end{align}
After proving \eqref{general_markov}, for completing the proof, we only need to write  
\[
\F_q^{+} = \bigcup_{n = 1}^\infty \Sigma_n
\]
in such a way  that 
\begin{itemize}
\item $\Sigma_1 = \Sigma_e$,  
\item $\Sigma_{n+1} = \Sigma_{n} \cup \Sigma_{w_n}$,
\item  $\Sigma_n \cap \Sigma_{w_n} = \{w_n\}$ for a certain $w_n \in \F_q^{+}$
\item $\Sigma_n$ is connected for all $n$,
\end{itemize}
 and then use 
\[
K(\Sigma_{n+1})= K(\Sigma_n)*_{w_n} K(\Sigma_{w_n})
\] 
to prove the positive definiteness of $K(\Sigma_{n+1})$ from the positive definiteness of $K(\Sigma_n)$ and $K(\Sigma_{w_n})$. 

Now let us prove \eqref{general_markov}. We only need to show that for any $\sigma_1 \in \Sigma$ and $\sigma_2 \in \Sigma_{w}$, we have 
\begin{align}\label{markov_general_bis}
\TT_{\beta_q}^{(\F_q^{+})}(\sigma_1, \sigma_2)= \TT_{\beta_q}^{(\F_q^{+})}(\sigma_1, w) \TT_{\beta_q}^{(\F_q^{+})}(w, \sigma_2). 
\end{align}

Note that $\sigma_2$ has only $(q+1)$-choices.  If $\sigma_2 = w$, then both sides of \eqref{markov_general_bis} are $\TT_{\beta_q}^{(\F_q^{+})}(\sigma_1, w)$. 

Assume now  $\sigma_2 = w s_i$ for some $i = 1, 2, \cdots, q$. 
{\flushleft \bf Claim II:} Note that $(\sigma_1, w s_i) \in R$ if and only if $\sigma_1, w,  \sigma_2 = ws_i$ are mutually comparable and are ordered by $\sigma_1 \preceq w \preceq \sigma_2 = w s_i$.  

Indeed, since $e, \sigma_2= ws_i$ determines a unique geodesic ray, if $\sigma_1$ is not before or equal $w$ in this geodesic ray, then $\sigma_1$ can not be connected to $e$ without passing the point  $\sigma_2 \in \Sigma_w$, this would contradict the fact that $\Sigma$ is connected. 

Therefore, if $(\sigma_1, ws_i) \notin R$ then $(\sigma_1, w) \notin R$ and  both sides of \eqref{markov_general_bis} vanish;  if $(\sigma_1, w s_i) \in R$, then using Claim II, we have 
\[
d(\sigma_1, w s_i) = d(\sigma_1, w) +1
\]
Hence in this case, we have 
\[
\TT_{\beta_q}^{(\F_q^{+})}(\sigma_1, ws_i)= q^{-d(\sigma_1, ws_i)/2} =  q^{-d (\sigma_1, w)/2} q^{-1/2} =  \TT_{\beta_q}^{(\F_q^{+})}(\sigma_1, w) \TT_{\beta_q}^{(\F_q^{+})}(w, ws_i).
\] 
This completes the proof of \eqref{general_markov} and thus the whole proof of Lemma \ref{prop_Bozej}.
\end{proof}

\subsection{The second proof of Lemma \ref{prop_Bozej}: Gaussian processes on $\F_q^{+}$}
Our second proof of Lemma \ref{prop_Bozej} relies on self-similarity of the Cayley graph of $\F_q^{+}$, that is, for any $\sigma \in \F_q^{+}$, the structure of $\sigma\cdot \F_q^{+}$ is the same as that of $\F_q^{+}$. 

Let $(G_\sigma)_{\sigma \in \F_q^{+}}$ be a family of i.i.d.  Gaussian real random variables, all of which have expectation $0$ and variance $1$. For any $r\in (0, 1)$, we construct a Gaussian process on $\F_q^{+}$ as follows. For each $\sigma \in \F_q^{+}$, define a random variable $X_\sigma^{(r)}$ as a linear averaging of our original i.i.d. Gaussian process $(G_\sigma)_{\sigma \in \F_q^{+}}$ over the subset $\sigma \cdot \F_q^{+} \subset \F_q^{+}$ by 
\begin{align}\label{def-X-r}
X_\sigma^{(r)}: = \sum_{k=0}^\infty  \frac{r^k}{q^{k/2}}\sum_{\tau \in \F_q^{+}: | \tau | = k}   G_{\sigma \tau}.
\end{align}
Clearly, since $G_\sigma$'s are independent,  for any integer $k\ge 0$,  we have 
\begin{align}\label{av-norm-1}
\Big\|  \frac{1}{q^{k/2}}\sum_{\tau \in \F_q^{+}: | \tau | = k}   G_{\sigma \tau} \Big\|_2 = 1
\end{align}
and the series \eqref{def-X-r} converges in $L^2$-sense. 

\begin{lemma}\label{lem-X-r}
For any real number $r\in (0, 1)$, the Gaussian process $(X_\sigma^{(r)})_{\sigma \in \F_q^{+}}$ satisfies 
\[
\Cov\left(\frac{X_\sigma^{(r)}}{\| X_\sigma^{(r)}\|_2}, \frac{X_{\delta}^{(r)}}{\| X_\delta^{(r)}\|_2}\right) = r^{d(\sigma, \delta)} q^{- d(\sigma, \delta)/2} \mathds{1}(\text{$\sigma$ and $\delta$ are comparable}).
\]
\end{lemma}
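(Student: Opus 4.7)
The plan is to compute the relevant second moments directly, exploiting the fact that the $G_\sigma$'s are i.i.d.\ standard Gaussians, so $\E[G_\mu G_\nu] = \mathds{1}(\mu = \nu)$.

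First I would compute $\|X_\sigma^{(r)}\|_2^2$. By the independence of the $G_{\sigma\tau}$'s and \eqref{av-norm-1}, the terms $\frac{1}{q^{k/2}}\sum_{|\tau|=k} G_{\sigma\tau}$ for different values of $k$ are mutually orthogonal in $L^2$ (they involve disjoint groups of $G$'s), and each has $L^2$-norm equal to $1$. Therefore
\[
\|X_\sigma^{(r)}\|_2^2 = \sum_{k=0}^\infty r^{2k} = \frac{1}{1-r^2},
\]
a quantity independent of $\sigma$; so the normalization constants cancel out and it only remains to compute $\E[X_\sigma^{(r)} X_\delta^{(r)}]$.

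Next I would analyze the covariance by cases. If $\sigma, \delta$ are non-comparable, then by the cancellation property in the free semigroup $\F_q^{+}$ the sets $\sigma \cdot \F_q^{+}$ and $\delta \cdot \F_q^{+}$ are disjoint, so $X_\sigma^{(r)}$ and $X_\delta^{(r)}$ are built from disjoint families of i.i.d.\ Gaussians and their covariance vanishes. If $\sigma, \delta$ are comparable, write without loss of generality $\delta = \sigma w$ with $d := |w| = d(\sigma,\delta)$. Expanding
\[
\E[X_\sigma^{(r)} X_\delta^{(r)}] = \sum_{k,k'=0}^\infty \frac{r^{k+k'}}{q^{(k+k')/2}} \sum_{\substack{|\tau|=k \\ |\tau'|=k'}} \E[G_{\sigma\tau} G_{\sigma w \tau'}],
\]
the summand is nonzero exactly when $\sigma\tau = \sigma w\tau'$, i.e.\ $\tau = w\tau'$, which forces $k = d + k'$ and leaves $q^{k'}$ admissible choices of $\tau'$. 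Substituting yields
\[
\E[X_\sigma^{(r)} X_\delta^{(r)}] = \frac{r^d}{q^{d/2}} \sum_{k'=0}^\infty r^{2k'} = \frac{r^d q^{-d/2}}{1-r^2}.
\]
Dividing by $\|X_\sigma^{(r)}\|_2 \|X_\delta^{(r)}\|_2 = (1-r^2)^{-1}$ gives $r^{d(\sigma,\delta)} q^{-d(\sigma,\delta)/2}$, which is the desired formula.

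There is essentially no hard step: the whole argument is a bookkeeping exercise with the indices of the i.i.d.\ Gaussians. The only point requiring a moment of care is the use of cancellation in $\F_q^{+}$ to identify precisely when $\sigma\tau = \sigma w\tau'$ (respectively, to rule out any overlap in the non-comparable case), but this is immediate from the free-semigroup structure.
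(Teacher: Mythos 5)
Your proof is correct and follows essentially the same computation as the paper: compute the normalizing constant $\|X_\sigma^{(r)}\|_2^2 = 1/(1-r^2)$, split into the non-comparable case (disjoint supports give zero covariance) and the comparable case (expand the double series, use $\E[G_{\sigma\tau}G_{\sigma w\tau'}] = \mathds{1}_{\{\tau = w\tau'\}}$, and sum the geometric series). The bookkeeping and the key identification $\tau = w\tau'$ (forcing $k = d(\sigma,\delta) + k'$) match the paper's argument step for step.
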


\begin{proof}
First of all, for any $\sigma \in \F_q^{+}$, by mutual independence of the Gaussian random variables $G_\sigma$'s and \eqref{av-norm-1}, we have 
\begin{align}\label{norm-X}
\| X_\sigma^{(r)}\|_2^2 =  \sum_{k =0}^\infty r^{2k} = \frac{1}{1 - r^2}. 
\end{align}
Now let $\sigma, \delta \in \F_q^{+}$ be any pair of distinct elements. If $\sigma$ and $\delta$ are not comparable, then the two subsets $\sigma \cdot \F_q^{+}$ and $\delta\cdot \F_q^{+}$ are disjoint and therefore, recalling that  the Gaussian process \eqref{def-X-r} is a real-valued centered process, we have
\[
\Cov(X_\sigma^{(r)}, X_{\delta}^{(r)}) =  \E\Big(X_\sigma^{(r)} X_{\delta}^{(r)} \Big) = 0.
\] 
If $\sigma$ and $\delta$ are comparable. By symmetry, we may assume that $\sigma \preceq \delta$, then there exists a unique $w\in \F_q^{+}$ such that $\delta = \sigma w$ with $|w| = d(\sigma, \delta)$ and we have   
\begin{align}\label{cov-X-X}
\begin{split}
\Cov(X_\sigma^{(r)}, X_{\delta}^{(r)})&  =  \E\Big(X_\sigma^{(r)} X_{\delta}^{(r)} \Big) =  \E\Big(X_\sigma^{(r)} X_{\sigma w}^{(r)} \Big) 
\\
& =  \E \Big( \sum_{k=0}^\infty  \frac{r^k}{q^{k/2}} \sum_{\tau \in \F_q^{+}: | \tau | = k}   G_{\sigma \tau} \cdot \sum_{\ell=0}^\infty  \frac{r^\ell}{q^{\ell/2}}\sum_{\iota \in \F_q^{+}: | \iota | = \ell}   G_{\sigma w \iota}\Big)  
\\
 & = \sum_{k=0}^\infty \sum_{\ell = 0}^\infty \frac{r^{k  +\ell}}{ q^{\frac{k+\ell}{2}}}   \sum_{\iota \in \F_q^{+}: | \iota | = \ell}    \E\Big(  \sum_{\tau \in \F_q^{+}: | \tau | = k} G_{\sigma \tau} \cdot    G_{\sigma w \iota} \Big)  
\\
& = \sum_{k=0}^\infty \sum_{\ell = 0}^\infty \frac{r^{k  +\ell}}{ q^{\frac{k+\ell}{2}}}   \sum_{\iota \in \F_q^{+}: | \iota | = \ell}    \sum_{\tau \in \F_q^{+}: | \tau | = k}   \mathds{1}_{\{\tau = w \iota \}}  
\\
& = \sum_{k=0}^\infty \sum_{\ell = 0}^\infty \frac{r^{k  +\ell}}{ q^{\frac{k+\ell}{2}}} \mathds{1}_{\{k= \ell + |w|\}}  q^{\ell} 
\\
& = r^{|w|} q^{-|w|/2} \frac{1}{1-r^2}  = r^{d(\sigma, \delta)} q^{- d(\sigma, \delta)/2}  \frac{1}{1-r^2}.
\end{split}
\end{align}
Comparing \eqref{norm-X} and \eqref{cov-X-X}, we complete the proof of the lemma. 
\end{proof}

\begin{proof}[Proof of Lemma \ref{prop_Bozej}]
Lemma \ref{lem-X-r} implies that for any $r\in(0,1)$, the kernel $K^{(r)}: \F_q^{+}\times \F_q^{+}\rightarrow \R$ defined by 
\[
K^{(r)}(\sigma, \delta) = r^{d(\sigma, \delta)} q^{- d(\sigma, \delta)/2} \mathds{1}(\text{$\sigma$ and $\delta$ are comparable})
\]
is positive definite. Therefore, recalling \eqref{formula_R}, the kernel $\TT_{\beta_q}^{(\F_q^{+})}$, being  the  limit of  the positive definite kernels $K^{(r)}$ as  $r\to 1$:  
\[
\TT_{\beta_q}^{(\F_q^{+})}(\sigma, \delta) =  q^{- d(\sigma, \delta)/2} \mathds{1}(\text{$\sigma$ and $\delta$ are comparable}) = \lim_{r\to 1} K^{(r)}(\sigma, \delta), \quad \forall \sigma, \delta \in \F_q^{+}, 
\]
is also positive definite. 
\end{proof}

\subsection{The third proof of Lemma \ref{prop_Bozej}}
Note that the boundary $\partial T_q$  is naturally homeomorphic to the Cantor group $(\Z/ q\Z)^\N$. Let $\mu_q$ be Cantor probability measure on $\partial T_q$ corresponding to the normalized Haar measure  on $(\Z/ q\Z)^\N$.  For any $\sigma \in T_q$, define
\[
C(\sigma) = \Big\{\xi \in \partial T_q\Big| \sigma \in \xi\Big\},
\]
that is, $C(\sigma)$ is the subset of rooted geodesic rays passing through the vertex $\sigma \in T_q$. The measure $\mu_q$  is characterized by the following property: for any $\sigma \in T_q$, we have
\[
\mu_q(C(\sigma)) = \frac{1}{q^{|\sigma|}}.
\]
Now for each $\sigma \in T_q$, we define a function $f_\sigma: \partial T_q \rightarrow \R$ by
\[
f_\sigma: = \frac{\mathds{1}_{C(\sigma)}}{\sqrt{\mu_q(C(\sigma))}}
\]
Then immediately,  we obtain a positive definite kernel $K: T_q \times T_q \rightarrow \R$ defined by 
\begin{align}\label{def-K-inner}
K(\sigma, \delta) = \int_{\partial T_q} f_\sigma f_\delta d\mu_q. 
\end{align}
Note that, for any pair $(\sigma, \delta)$ of non-comparable vertices of $T_q$, the subsets $C(\sigma)$ and $C(\delta)$ are disjoint and hence $K(\sigma, \delta) = 0$ for all such pairs of vertices.  On the other hand, if $\sigma, \delta$ are comparable, say, we have $\sigma \pless \delta$, then $C(\sigma) \cap C(\delta) = C(\delta)$ and 
\[
K(\sigma, \delta)  = \frac{\mu_q(C(\tau))}{ \sqrt{\mu_q(C(\sigma))} \cdot \sqrt{\mu_q(C(\delta))}} =q^{\frac{|\sigma|-|\delta|}{2}} = q^{-d(\sigma, \delta)/2}. 
\]
Therefore, the positive definite kernel in \eqref{def-K-inner} has the following form:
\[ 
K(\sigma, \delta) =  q^{- d(\sigma, \delta)/2} \mathds{1}(\text{$\sigma$ and $\delta$ are comparable}),
\]
which, by using the identification between $T_q$ and $\F_q^{+}$,  is exactly the kernel $\TT_{\beta_q}^{(\F_q^{+})}$ defined in \eqref{def_kernel_a}. Hence $\TT_{\beta_q}^{(\F_q^{+})}$ is positive definite and we complete the proof of the lemma.

\subsection{Proof of Theorem \ref{thm_classification}}
By Lemma \ref{lem_decay},  to prove Theorem \ref{thm_classification}, it suffices to prove that for any positive Radon measure $\mu$ on $\T$, the function 
\[
\alpha(n): = q^{-n/2} \widehat{\mu}(n) = \beta_q(n) \widehat{\mu}(n), \quad n \in \N,
\] 
defines a $q$-HPD function. But clearly, Lemma \ref{prop_Bozej} and Lemma \ref{lem_mod} together immediately imply that the above function $\alpha$ is $q$-HPD.  

\subsection{Proof of Theorem \ref{thm_class_bis}}
Fix an integer $q\ge 2$, we set
\[
\HH\mathcal{P}_\T(q): = \left\{ \varphi: \T \rightarrow \R \Big|  \text{$\varphi$ is  $q$-HP}\right\}.
\]
 Clearly, from the definition of HP functions, $\HH\mathcal{P}_\T(q)$ is a closed positive cone and we have a natural affine correspondence between $\HP_\T(q)$ and $\HP_\N(q)$ given by: 
\begin{align}\label{p-pd-corr}
\HP_\T(q) \ni \varphi \mapsto \widehat{\varphi}\in \HP_\N(q).
\end{align}
This correspondence \eqref{p-pd-corr} and Theorem \ref{thm_classification}  imply immediately Theorem \ref{thm_class_bis}.

\subsection{Proof of Theorem \ref{thm-general-T}} If a rooted tree $T$ is of uniformly bounded valence, then $T$ can be embeded into a rooted homogeneous tree $T_q$. Now by restricting a non-trivial branching-type SSP on $T_q$ (whose existence is proved by Theorem \ref{thm_class_bis}, for instance, the branching-type SSP on $T_q$  corresponding to the function $\beta_q$ in Lemma \ref{prop_Bozej}) onto the sub-tree $T$, we obtain a non-trivial branching-type SSP on $T$. 

Conversely, we show the existence of non-trivial branching-type SSP on $T$ implies that $T$ is of uniformly bounded valence. We argue by contradiction, assume that $T$ is not of uniformly bounded valence.  Then  by the assumption that $T$ has no leaves, for any $n\ge 1$ and  any $q\ge 2$, there exist distinct $\sigma_0, \sigma_1, \cdots, \sigma_q\in T$ such that 
\begin{align}\label{conf-loc}
\sigma_0 \pless \sigma_i,  \quad d(\sigma_0, \sigma_i) = n,   \quad i = 1, \cdots, q.
\end{align}
Note that $\sigma_1, \sigma_2, \cdots, \sigma_q$ are mutually non-comparable.  Let now $\nu$ be the spectral measure of a mean-zero branching-type SSP $X = (X_\sigma)_{\sigma\in T}$ on $T$. Replacing $(X_\sigma)_{\sigma\in T}$ by $(\frac{X_\sigma}{\| X_\sigma\|_2})_{\sigma\in T}$ if necessary, we may assume that $\|X_\sigma\|_2  = 1$ for all $\sigma \in T$.  Then the covariance matrix of the random vector $(X_{\sigma_0}, X_{\sigma_1}, \cdots, X_{\sigma_q})$ is given by a non-negative definite matrix: 
\[
C = \left[
\begin{array}{cc}
1 &w_q\\
w_q^* &  I_q
\end{array}
\right],
\]
where $I_q$ is the identity matrix of size $q\times q$ and $w_q = (\widehat{\nu}(n), \widehat{\nu}(n), \cdots, \widehat{\nu}(n))$ is the $(1 \times q)$-row vector with all coefficients $\widehat{\nu}(n)$. A simple computation shows that 
\[
\left[
\begin{array}{cc}
1-w_q w_q^* &0\\
0&  I_q
\end{array}
\right]  =        \left[
\begin{array}{cc}
1 & -w_q\\
0 &  I_q
\end{array}
\right] \cdot  C \cdot\left[
\begin{array}{cc}
1 &-w_q\\
0&  I_q
\end{array}
\right]^*.
\]
Since $C$ is non-negative definite, we obtain that 
\[
1 - q | \widehat{\nu}(n)|^2 = 1 - w_q w_q^* \ge 0. 
\]
That is, we have 
\begin{align}\label{fourier-up}
| \widehat{\nu}(n)|^2\le 1/q.
\end{align}
 Now since $n\ge 1, q\ne 2$ are arbitrary, we obtain that 
\[
\widehat{\nu}(n) =0, \quad \forall n\ge 1. 
\]
This implies that the branching-type SSP $X$ is trivial.  But $X$ is chosen arbitrary, hence all branching-type SSP on $T$ is trivial. Thus we complete the whole proof.

\subsection{Proof of Proposition \ref{prop-ac}}
Assume that $\nu$ is the spectral measure of a branching-type SSP on $T$. 
By the definition of $\Delta_n(T)$, for any $n\ge 1$, we can find distinct vertices $\sigma_0, \sigma_1, \cdots, \sigma_{\Delta_n(T)} \in T$ such that 
\begin{align}\label{conf-loc-as}
\sigma_0 \pless \sigma_i,  \quad d(\sigma_0, \sigma_i) = n,   \quad i = 1, \cdots, \Delta_n(T).
\end{align}
Using the same derivation as in 
\[
\text{\eqref{conf-loc} $\Longrightarrow$ \eqref{fourier-up}},
\]
the existence of vertices $\sigma_0, \sigma_1, \cdots, \sigma_{\Delta_n(T)}\in T$ satisfying \eqref{conf-loc-as}  implies that the spectral measure $\nu$ satisfy 
\[
| \widehat{\nu}(n)|^2 \le \frac{1}{\Delta_n(T)}. 
\]
Therefore, the assumtion \eqref{inv-sum} implies that 
\[
\sum_{n\in \Z} | \widehat{\nu}(n)|^2<\infty.
\]
It follows that the function $\rho: \T \rightarrow \R_{+}$ defined by
\[
\rho(\theta)= \sum_{n\in \Z} \widehat{\nu}(n) e^{i n \theta} 
\]
is in $L^2(\T, m)$ and $\nu(d\theta) = \rho(\theta) dm(\theta)$. This completes the whole proof of Proposition \ref{prop-ac}.

\section{Proof of Theorem \ref{thm-2-1}}
We start with an elementary result on the non-negative definite matrices, whose proof is included as well for the reader's convenience.  

For a square matrix $A$, we write $A\ge 0$ if and only if $A$ is non-negative definite.  In what follows, elements in $\C^n$ will be considered as row vectors. 
\begin{lemma}\label{lem-pd} 
Assume that $a \ge 0$ and let $v\in \C^n$ be a row vector and $B$ be an $n\times n$ non-negative matrix. Then 
\[
C = \left[
\begin{array}{cc}
a & v
\\
v^* & B
\end{array}
\right] \ge 0 \Longleftrightarrow  a B - v^*v \ge 0. 
\]
\end{lemma}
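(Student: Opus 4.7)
The plan is to use the block Schur-complement factorization, exactly the trick already deployed in Step~1 of the proof of Lemma~\ref{prop_Bozej}. One writes $C$ as a congruence transform of a block-diagonal matrix, so the equivalence reduces to non-negativity of the diagonal blocks.

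First I would handle the generic case $a > 0$. A direct multiplication verifies
\[
C \;=\; \begin{pmatrix} 1 & 0 \\ v^{*}/a & I_{n} \end{pmatrix} \cdot \begin{pmatrix} a & 0 \\ 0 & B - \tfrac{1}{a}\, v^{*} v \end{pmatrix} \cdot \begin{pmatrix} 1 & v/a \\ 0 & I_{n} \end{pmatrix},
\]
and since the two outer triangular matrices are mutually adjoint and invertible (each of determinant~$1$), congruence by them preserves non-negative definiteness. Hence $C \ge 0$ iff the middle block-diagonal matrix is $\ge 0$, iff $B - v^{*} v / a \ge 0$; multiplying by the positive scalar $a$, this is equivalent to $a B - v^{*} v \ge 0$.

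It remains to dispose of the degenerate case $a = 0$. Under the standing hypothesis $B \ge 0$, both sides of the claimed equivalence reduce to the condition $v = 0$: on the right-hand side, $a B - v^{*} v = - v^{*} v$ is non-negative definite iff $v = 0$, since $v^{*} v$ is non-negative definite; on the left-hand side, one uses the elementary fact that if a non-negative definite matrix has a vanishing diagonal entry, then all entries of the corresponding row and column must be zero, which forces $v = 0$. Conversely, if $v = 0$ then $C = \mathrm{diag}(0, B) \ge 0$ by the hypothesis $B \ge 0$.

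I do not expect any substantive obstacle; the whole statement is a routine Schur-complement computation, and the only mild subtlety is the boundary case $a = 0$. If preferred, this case can alternatively be recovered from the $a > 0$ case by the perturbation $a \rightsquigarrow a + \epsilon$ with $\epsilon \to 0^{+}$, using that the set of non-negative definite matrices is closed.
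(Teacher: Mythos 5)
Your argument is essentially the same as the paper's: the paper also uses the Schur--complement congruence, in the normalized form of first dividing through by $a>0$ and then conjugating $C$ by $\bigl(\begin{smallmatrix}1 & -v\\ 0 & I_n\end{smallmatrix}\bigr)$ to reach $\mathrm{diag}(1, B-v^*v)$, and it handles $a=0$ by the same limiting argument you mention as an alternative. Your treatment of the $a=0$ case by direct inspection of the vanishing diagonal entry is a little more explicit than the paper's one-line "by approximation," but there is no difference of substance.
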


\begin{proof}
By approximation, we may assume that $a > 0$. Then replacing $C$ by $\frac{1}{a} C$ if necessary, we may assume that $a = 1$. The result follows immediately by observing the following equality
\[
\left[
\begin{array}{cc}
1 & -v
\\
0 & I_n
\end{array}
\right]^*
\left[
\begin{array}{cc}
1 & v
\\
v^* & B
\end{array}
\right]     \left[
\begin{array}{cc}
1 & -v
\\
0 & I_n
\end{array}
\right] 
=
\left[
\begin{array}{cc}
1 & 0
\\
0 & B-v^*v
\end{array}
\right].
\]
\end{proof}

\begin{proof}[Proof of Theorem \ref{thm-2-1}]
Fix any integer $q \ge 2$. For simplifying our notation, we identify  $T(q; 1)$ with the subset 
\begin{align}\label{def-pi-q}
\Pi_q: = \bigcup_{i=1}^q\{s_i^k:  k \in \N\} \subset \F_q^{+}.
\end{align}
The rooted tree structure on $T(q; 1)$ coincides with the sub-rooted-tree structure of $\Pi_q$. Using the identification $T(q; 1) \simeq \Pi_q$, a necessary and sufficient condition for a positive Radon measure $\mu$ on $\T$ to be the specture measure of a branching-type SSP on $T(q; 1)$ is that the following kernel $\TT^{(\Pi_q)}: \Pi_q \times \Pi_q \rightarrow \C$ is positive definite: 
\begin{align}\label{def-ker-q-1}
\TT^{(\Pi_q)} (s_i^k, s_j^l) = \left\{
\begin{array}{ll}
\widehat{\mu}(l-k) \cdot \mathds{1}(i = j), & \text{if $l, k \ge 1$;}
\vspace{2mm}
\\
\widehat{\mu}(l-k), & \text{if $l = 0$ or $k =0$.}
\end{array}
\right.
\end{align}
For any integer $n\ge 1$, set
\[
\Pi_q^{(n)}: = \bigcup_{i = 1}^q \{s_i^k: 1 \le k \le n\}.
\]
Clearly, the positive definiteness of the kernel $\TT^{(\Pi_q)}$ is by definition equivalent to the positive definiteness of all the finite matrices 
\[
C_n: = \TT^{(\Pi_q)}|_{\Pi_q^{(n)} \times \Pi_q^{(n)}}, \quad n \ge 1. 
\]
For any integer $n\ge 1$, we define the Toeplitz matrix $\TT_n(\mu)$  by
\begin{align}\label{def-T-n}
\TT_n(\mu) := \Big[ \widehat{\mu}(j-i)\Big]_{1\le i, j \le n}. 
\end{align}
By ordering the elements in the set $\Pi_q^{(n)}$ as 
\[
(e, \underbrace{s_1, s_1^2, \cdots, s_1^n}_{\text{the branch from $s_1$}}, \underbrace{s_2, s_2^2, \cdots, s_2^n}_{\text{the branch from $s_2$}}, \cdots, \underbrace{s_q, s_q^2, \cdots, s_q^n}_{\text{the branch from $s_q$}})
\]
and using the definition of $\TT_n(\mu)$ in \eqref{def-T-n}, the matrix $C_n$ has the following block form
\begin{align}\label{def-C-n}
C_n = \left[
\begin{array}{ccccc}
\widehat{\mu}(0)&  v_n &  v_n & \cdots &  v_n
\\
v_n^* & \TT_n(\mu) & 0 & \cdots & 0
\\
v_n^* & 0 & \TT_n(\mu) &  \cdots & 0
\\
\vdots & \vdots  & \vdots & \ddots & \vdots
\\
v_n^* & 0 & 0 & \cdots & \TT_n(\mu)
\end{array}
\right],
\end{align}
where $v_n \in \C^n$ is a row vector given by 
\[
v_n = (\widehat{\mu}(1), \widehat{\mu}(2), \cdots, \widehat{\mu}(n)). 
\]
Now applying Lemma \ref{lem-pd} and noting that $\mu(\T) = \widehat{\mu}(0)$,  the positive definiteness condition of the kernel $\TT^{(\Pi_q)}$ is equivalent to the following condition 
\begin{align}\label{many-pd-c}
\mu(\T) \cdot \left[
\begin{array}{cccc}
 \TT_n(\mu) & 0 & \cdots & 0
\\
 0 & \TT_n(\mu) &  \cdots & 0
\\
 \vdots  & \vdots & \ddots & \vdots
\\
 0 & 0 & \cdots & \TT_n(\mu)
\end{array}
\right] -  (v_n, v_n, \cdots, v_n)^* (v_n, v_n, \cdots, v_n) \ge 0, \quad \forall n\ge 1.  
\end{align}
The condition \eqref{many-pd-c} can be rewritten as 
\begin{align}\label{many-pd-c-bis}
\mu(\T) \sum_{k= 1}^q x_k \TT_n(\mu) x_k^* - \Big| \sum_{k = 1}^q v_n x_k^*\Big|^2 \ge 0, \quad \text{$\forall n \ge 1$ and $\forall x_1, \cdots, x_q \in \C^n$.} 
\end{align}
Note that for any $a = (a_1, \cdots, a_n)\in \C^n$, we have 
\begin{align*}
a \TT_n(\mu)a^*   & = \sum_{1 \le k, l \le n } \widehat{\mu}(l-k) a_k \overline{a_l} 
\\
& = \int_\T   \sum_{1 \le k, l \le n } e^{-i 2 \pi (l-k) \theta} a_k \overline{a_l}  d\mu (\theta)  = \int_\T \Big| \sum_{ k = 1}^n a_k e^{i 2 \pi k \theta}\Big|^2 d\mu(\theta)
\end{align*}
and 
\[
 v_n a^* =  \Big\langle 1, \sum_{ k = 1}^n a_k e^{i 2 \pi k \theta}\Big\rangle_{L^2(\T, \mu)} ,
\]
where $1$ stands for the constant function $1 (\theta) \equiv 1$ on $\T$. Therefore, for any $n\ge 1$, by writing 
\[
H_0^2(\T,\mu)_{\le n}: = \Big\{\sum_{k=1}^n a_k e^{i 2\pi k \theta}\Big| a   = (a_1, \cdots, a_n)\in \C^n\Big\},
\]
the condition \eqref{many-pd-c-bis} is equivalent to 
\begin{align}\label{many-pd-c-tri}
\Big|  \sum_{i=1}^q \Big\langle 1,  \,  P_i \Big\rangle_{L^2(\T, \mu)} \Big|^2 \le \mu(\T)  \sum_{i  =1}^q\| P_i\|_{L^2(\T, \mu)}^2, \text{$\forall n\ge 1, \forall P_1, \cdots, P_q \in H_0^2(\T, \mu)_{\le n}$.}
\end{align}
Introduce the vector-valued function space $L^2(\T, \mu; \C^q)$ and the corresponding subspace $H_0^2(\T, \mu; \C^q)_{\le n}$ for all $n \ge 1$. Note that we have orthogonal decompositions: 
\[
L^2(\T, \mu; \C^q) = \underbrace{ L^2(\T, \mu) \oplus \cdots \oplus L^2(\T, \mu)}_{\text{$q$-summands}}
\]
and 
\[
H_0^2(\T, \mu; \C^q)_{\le n} = \underbrace{ H_0^2(\T,\mu)_{\le n} \oplus \cdots \oplus H_0^2(\T,\mu)_{\le n}}_{\text{$q$-summands}}.
\]
Set $\vec{\mathds{1}} = (1, 1, \cdots, 1) \in L^2(\T, \mu; \C^q)$. Then the condition \eqref{many-pd-c-tri} is equivalent to the condition that for all $n\ge 1$ and all $\vec{P} \in H_0^2(\T, \mu; \C^q)_{\le n}$, we have 
\begin{align}\label{in-pd-le-norm}
\Big|\langle  \vec{\mathds{1}},   \vec{P}\rangle_{L^2(\T, \mu; \C^q)}\Big| \le \sqrt{\mu(\T)} \| \vec{P}\|_{L^2(\T,\mu; \C^q)}. 
\end{align}
For fixed $n\ge 1$, by writing  the orthogonal decomposition of $\vec{\mathds{1}}$ with respect to the subspace $H_0^2(\T, \mu; \C^q)_{\le n}$, the above inequality  \eqref{in-pd-le-norm} holds for all $\vec{P}\in H_0^2(\T, \mu; \C^q)_{\le n}$ if and only if 
\[
\inf_{\vec{P}\in H_0^2(\T, \mu; \C^q)_{\le n}} \| \vec{\mathds{1}} - \vec{P}\|_{L^2(\T, \mu; \C^q)}^2 \ge    \| \vec{1}\|^2_{L^2(\T, \mu; \C^q)}  - \mu(\T) =  (q -1)\mu(\T). 
\]
Using the above orthogonal decompositions of $L^2(\T, \mu; \C^q)$ and $H_0^2(\T, \mu; \C^q)_{\le n}$, we have 
\[
\inf_{\vec{P}\in H_0^2(\T, \mu; \C^q)_{\le n}} \| \vec{\mathds{1}} - \vec{P}\|_{L^2(\T, \mu; \C^q)}^2 = q \inf_{P\in H_0^2(\T, \mu)_{\le n}} \| 1 - P\|_{L^2(\T, \mu)}^2
\]
Therefore, we show that the condition \eqref{many-pd-c-bis} is equivalent to 
\[
 q \inf_{n\ge 1} \inf_{P\in H_0^2(\T, \mu)_{\le n}} \| 1 - P\|_{L^2(\T, \mu)}^2  \ge (q -1)\mu(\T). 
\]
Finally, by the classical Szeg\"o First Theorem (cf. \cite[Theorem 4.1.1]{Nikolski-hardy}), we have 
\[
\inf_{n\ge 1} \inf_{P\in H_0^2(\T, \mu)_{\le n}} \| 1 - P\|_{L^2(\T, \mu)}^2 = \exp \left(\int_\T  \log  \Big( \frac{d\mu_{ac}}{dm}(\theta)\Big)  dm(\theta)\right),
\]
hence the condition \eqref{many-pd-c-bis} is equivalent to the condition 
\[
\exp \left(\int_\T  \log  \Big( \frac{d\mu_{ac}}{dm}(\theta)\Big)  dm(\theta)\right)\ge \Big(1 -\frac{1}{q}\Big)\mu(\T) 
\]
and we complete the whole proof. 
\end{proof}

\section{Prediction-Theory results}

Theorem \ref{thm-pred} is based on the following simple observation of the symmetries.

\begin{lemma}\label{lem-sym}
Let $\alpha: \N\rightarrow \C$ be a $q$-HPD function on $\N$ and let $(X_\sigma)_{\sigma\in \F_q^{+}}$ be  any branching-type SSP on $\F_q^{+}$ satisfying \eqref{GF_K}. Then for any positive integer $n\ge 1$,  the orthogonal projection of  $X_e$ onto the finite-dimensional space 
\[
\spann \big\{X_\sigma \big| \sigma \in \F_q^{+}, \, 1\le |\sigma|\le n \big\}
\]
is of the form 
\[
\sum_{k=1}^n a_k \sum_{\sigma: | \sigma| = k }  X_\sigma.
\]
\end{lemma}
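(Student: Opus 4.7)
The plan is to exploit the symmetry of the covariance structure under the automorphism group of the rooted $q$-regular tree. Let $G_n$ denote the automorphism group of the finite rooted sub-tree $\{\sigma\in\F_q^{+} : |\sigma|\le n\}$ -- equivalently, the $n$-fold iterated wreath product $S_q\wr\cdots\wr S_q$. By construction, every $g\in G_n$ fixes the root $e$, preserves word length and the comparability relation $\preceq$, and acts transitively on each level $\{\sigma : |\sigma|=k\}$ for $1\le k\le n$.

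The first step is to realize $G_n$ as a group of unitaries on the $L^2$-space of the underlying probability space. Since $\TT_\alpha^{(\F_q^{+})}(\sigma,\delta)$ depends only on $|\sigma|$, $|\delta|$ and the comparability of $(\sigma,\delta)$, the kernel is $G_n$-invariant: $\TT_\alpha^{(\F_q^{+})}(g(\sigma),g(\delta))=\TT_\alpha^{(\F_q^{+})}(\sigma,\delta)$. The computation
\[
\Bigl\|\sum_\sigma c_\sigma X_{g(\sigma)}\Bigr\|_{L^2}^2 = \sum_{\sigma,\delta} c_\sigma\overline{c_\delta}\,\TT_\alpha^{(\F_q^{+})}(g(\sigma),g(\delta)) = \Bigl\|\sum_\sigma c_\sigma X_\sigma\Bigr\|_{L^2}^2
\]
then simultaneously establishes well-definedness and isometry of the prescription $U_g(\sum c_\sigma X_\sigma):=\sum c_\sigma X_{g(\sigma)}$, which therefore extends to a unitary on the $L^2$-closure of $\spann\{X_\sigma:\sigma\in\F_q^{+}\}$. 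I expect this step to be the only delicate point: one cannot naively ``relabel'' the random variables, because the $X_\sigma$ may exhibit non-trivial linear dependencies, and the kernel-invariance identity above is exactly what is needed to circumvent this.

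With the unitaries in hand, since $g(e)=e$ and the finite-dimensional subspace $V_n:=\spann\{X_\sigma : 1\le|\sigma|\le n\}$ is $G_n$-invariant, the orthogonal projection $P$ onto $V_n$ commutes with every $U_g$, so $Y:=P(X_e)$ is $G_n$-invariant. Picking any representation $Y=\sum_{1\le|\sigma|\le n} c_\sigma X_\sigma$ and averaging yields
\[
Y = \frac{1}{|G_n|}\sum_{g\in G_n} U_g Y = \sum_\sigma c_\sigma\cdot\frac{1}{|G_n|}\sum_{g\in G_n} X_{g(\sigma)} = \sum_{k=1}^n\Bigl(\frac{1}{q^k}\sum_{|\sigma|=k} c_\sigma\Bigr)\sum_{|\tau|=k} X_\tau,
\]
where the final equality follows from the orbit-stabilizer theorem together with the transitivity of $G_n$ on the $k$-th level (of cardinality $q^k$). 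Setting $a_k:=q^{-k}\sum_{|\sigma|=k} c_\sigma$ delivers the desired symmetric form, completing the plan.
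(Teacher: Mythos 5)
Your proof is correct, and it takes a genuinely different route from the paper's. The paper works with the smaller symmetry group $\BS_q^n$ (tuples of permutations acting ``letter by letter'') rather than the full wreath-product automorphism group, and it never constructs unitaries: it simply averages the \emph{coefficient vector} $(c_\sigma)$ over the group action and uses the triangle inequality $\bigl\| \frac{1}{\#\BS_q^n}\sum_\pi (X_e - \sum_\sigma c_\sigma X_{\pi(\sigma)})\bigr\|_2 \le \|X_e - \sum_\sigma c_\sigma X_\sigma\|_2$, followed by the same orbit count you perform, to show the symmetrized vector is at least as close to $X_e$ as the original. That route sidesteps the well-definedness issue you correctly flag, because it never asserts ``$\sum c_\sigma X_\sigma = \sum d_\sigma X_\sigma \Rightarrow \sum c_\sigma X_{g(\sigma)} = \sum d_\sigma X_{g(\sigma)}$''; it only compares norms of explicitly written combinations. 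Your approach is conceptually cleaner (a $G_n$-equivariant projection of a $G_n$-fixed vector is $G_n$-fixed), at the modest cost of having to verify that $U_g$ is well-defined via kernel invariance, which you handle properly. One small imprecision: $G_n$ is only an automorphism group of the truncated tree, so ``extends to a unitary on the $L^2$-closure of $\spann\{X_\sigma:\sigma\in\F_q^+\}$'' overreaches; it suffices, and is all your argument uses, that $U_g$ is a unitary on the finite-dimensional space $\spann\{X_\sigma : |\sigma|\le n\}$, which contains both $X_e$ and $V_n$.
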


\begin{proof}
Recall that the orthogonal projection of $X_e$ onto a space minimize the $L^2$-distance of $X_e$ and the vectors in this space. Therefore, for the first assertion, it suffices to show that for any family of complex numbers $(c_\sigma)$, we have 
\begin{align}\label{sym-optimal}
\Big\| X_e - \sum_{k=1}^n     \frac{   \sum_{\tau: |\tau| = k}   c_\tau }{q^k}     \sum_{\sigma: | \sigma|= k}    X_\sigma \Big \|_2 \le \Big\| X_e - \sum_{\sigma \in \F_q^{+}: 1 \le | \sigma | \le n} c_\sigma X_\sigma \Big\|_2.
\end{align}
Let us show \eqref{sym-optimal}.  First introduce an action of  $n$-tuples $\pi = (\pi_1, \cdots, \pi_n) \in \BS_q^n$ of permutations of $\{1, \cdots, q\}$  on the set $\B_n := \{ \sigma \in \F_q^{+}: |\sigma| \le n \}$ by the following: $\pi(e) = e$ and if $\sigma = s_{i_1} s_{i_2}\cdots s_{i_j}$ with $1\le j \le n$, then set 
\[
\pi(\sigma) = \pi (s_{i_1} s_{i_2}\cdots s_{i_j}) =  s_{\pi_1(i_1)} s_{\pi_2(i_2)} \cdots s_{\pi_j(i_j)}. 
\]
For any $\sigma, \delta \in \B_n$, clearly, $\sigma$ and $\delta$ are comparable if and only if $\pi(\sigma)$ and $\pi(\delta)$ are comparable for any $\pi \in \BS_q^n$. Moreover, we have the identity of  the graph distances $d(\sigma, \delta) = d(\pi(\sigma),\pi(\delta))$. Therefore, by   \eqref{GF_K} and the specific structure  \eqref{def_kernel_a} of $K_\alpha$, for any $\pi \in \BS_q^n$, we have  
\[
\Big\| X_e - \sum_{\sigma \in \F_q^{+}: 1 \le | \sigma | \le n} c_\sigma X_{\pi(\sigma)} \Big\|_2 = \Big\| X_e - \sum_{\sigma \in \F_q^{+}: 1 \le | \sigma | \le n} c_\sigma X_\sigma \Big\|_2.
\]
 It follows  that 
\begin{align}\label{sym-av}
\begin{split}
\Big\|    \frac{1}{\#(\BS_q^n)}  \sum_{\pi \in \BS_q^n} \Big(  X_{e} -  \sum_{\sigma \in \F_q^{+}: 1  \le | \sigma | \le n} c_\sigma X_{\pi(\sigma)}\Big) \Big\|_2  &  \le \frac{1}{\#(\BS_q^n)} \sum_{\pi \in \BS_q^n} \Big\| X_e - \sum_{\sigma \in \F_q^{+}: 1 \le | \sigma | \le n} c_\sigma X_\sigma \Big\|_2 
\\
& =\Big\| X_e - \sum_{\sigma \in \F_q^{+}: 1 \le | \sigma | \le n} c_\sigma X_\sigma \Big\|_2.
\end{split}
\end{align}
Note that 
\begin{align*}
\frac{1}{\#(\BS_q^n)}  \sum_{\pi \in \BS_q^n}   \sum_{\sigma \in \F_q^{+}: 1  \le | \sigma | \le n} c_\sigma X_{\pi(\sigma)} &  = \frac{1}{(q!)^n} \sum_{\pi \in \BS_q^n} \sum_{k = 1}^n \sum_{|\sigma| = k} c_\sigma X_{\pi(\sigma)}
\\
&  = \frac{1}{(q!)^n} \sum_{k = 1}^n   \sum_{| \tau | =k} X_\tau\sum_{ |\sigma| = k} c_\sigma   \sum_{\pi \in \BS_q^n }\mathds{1}_{\{\pi(\sigma) = \tau\}}
\\
& = \frac{1}{(q!)^n} \sum_{k = 1}^n   \sum_{| \tau | =k} X_\tau   \sum_{|\sigma|=k} c_\sigma   \cdot [(q-1)!]^k (q!)^{n-k}
\\
 & = \sum_{k = 1}^n   \frac{1}{q^k} \sum_{| \tau | =k} X_\tau   \sum_{|\sigma|=k} c_\sigma.
\end{align*}
Therefore, we obtain the equality
\[
\frac{1}{\#(\BS_q^n)}  \sum_{\pi \in \BS_q^n} \Big(  X_{e} -  \sum_{\sigma \in \F_q^{+}: 1  \le | \sigma | \le n} c_\sigma X_{\pi(\sigma)}\Big) = X_e  - \sum_{k=1}^n     \frac{   \sum_{\tau: |\tau| = k}   c_\tau }{q^k}     \sum_{\sigma: | \sigma|= k}    X_\sigma,
\]
which combined with \eqref{sym-av} immediately implies the desired inequality \eqref{sym-optimal}. 
\end{proof}

\begin{proof}[Proof of Theorem \ref{thm-pred}]
Recall the definition \eqref{def_theta} for the stochastic process $(\Theta_n)_{n\in \N}$. 
Lemma \ref{lem-sym} implies that
\begin{align}\label{to-szego}
d_{L^2} \left(X_e, \, \overline{\spann}^{L^2}\big\{X_\sigma:  \sigma\in \F_q^{+}\setminus \{e\}\big\} \right) 
=   d_{L^2} \left(  \Theta_0, \, \overline{\spann}^{L^2}\big\{    \Theta_k: k = 1, 2, \cdots \big\} \right).
\end{align}
By Lemma \ref{lem_sp_av} and the equality \eqref{def_mu_a}, the centered stochastic process $(\Theta_n)_{n\in \N}$  is stationary and 
\[
\Cov(\Theta_n, \Theta_{n+k}) = q^{k/2}\alpha(k) = \widehat{\nu}_{\alpha}(k), \quad n, k \in \N.
\]
Therefore, by the classical Szeg\"o First Theorem  and recalling the Lebesgue decomposition \eqref{leb-dec}, we have 
\begin{align}\label{cl-szego}
d_{L^2} \left(  \Theta_0, \, \overline{\spann}^{L^2}\big\{    \Theta_k: k = 1, 2, \cdots \big\} \right) = \exp \left(\frac{1}{2}\int_\T  \log w_\alpha(\theta) dm(\theta)  \right).
\end{align}
Combining \eqref{to-szego} and \eqref{cl-szego},  we obtain the desired equality \eqref{pred-all}. 
\end{proof}

\begin{proof}[Proof of Proposition \ref{prop-pred-T-q-1}]
Recall the identification of $T(q; 1)$ with $\Pi_q$ in \eqref{def-pi-q}.  For the branching-type SSP $(X_\sigma)_{\sigma \in \Pi_q}$, set 
\[
L(i): = \overline{\spann}^{L^2} \Big\{X_{s_i^n}: n = 1, 2, \cdots\Big\}, \quad 1\le i \le q. 
\] 
Note that the spaces $L(1), \cdots, L(q)$ are mutually orthogonal. Let $P_i(X_e)$ be the orthogonal projection of $X_e$ onto $L(i)$, then by symmetry, we have 
\[
\| P_1(X_e)\|_{2} = \| P_2(X_e)\|_{2} = \cdots = \| P_q(X_e)\|_{2}. 
\]
Since $L(1), \cdots, L(q)$ are mutually orthogonal, the orthogonal projection onto $L(1) \oplus \cdots \oplus L(q)$ of the random variable $X_e$ is given by $P_1(X_e) + \cdots P_q(X_e)$ and thus 
\begin{multline*}
d_{L^2} \left(X_e, \, L(1) \oplus \cdots \oplus L(q) \right)  =  \sqrt{\|X_e\|_2^2 - \sum_{i = 1}^q \| P_i (X_e)\|_2^2} = \sqrt{\|X_e\|_2^2 - q \| P_1 (X_e)\|_2^2} \\\
= \sqrt{\|X_e\|_2^2 - q \Big( \| X_e\|_2^2 -  [d_{L^2} (X_e, \, L(1) )]^2\Big) }.
\end{multline*}
The classical Szeg\"o First Theorem says that 
\[
d_{L^2} (X_e, \, L(1) ) = \exp\left(\frac{1}{2}\int_\T  \log  \Big( \frac{d\mu_{ac}}{dm}\Big) dm\right).
\]
Therefore, we obtain 
\[
d_{L^2} \left(X_e, \, L(1) \oplus \cdots \oplus L(q) \right) = \sqrt{ q \exp \left(\int_\T  \log  \Big( \frac{d\mu_{ac}}{dm}\Big) dm\right)  - (q-1) \mu(\T)}.
\]
This completes the proof of the equality \eqref{pred-T-q-1}.
\end{proof}

\section{Two-weight Hankel inequalities}\label{sec_hankel}

\subsection{How we find these inequalities ?}
Let us now describe informally how the two-weight inequalities of Hankel operators arise naturally in our setting.

Let $\alpha : \N \rightarrow \C$ be a $q$-HPD function  on $\N$ and let $\varphi: \T\rightarrow \R_{+}$ be the corresponding $q$-HP function on $\T$ by \eqref{p-pd-corr}. Assume that $\varphi$ is not identically zero. Let $(X_\sigma)_{\sigma\in \F_q^{+}}$ be a centered stochastic  process on $\F_q^{+}$ satisfying 
\eqref{GF_K}. We can now explain the stationary stochastic processes behind our inequalities \eqref{ineq_hankel_1}, \eqref{ineq_hankel_2}, \eqref{ineq_hankel_3} involving Hankel operators.   

\begin{itemize}
\item  {\it The stationary stochastic processes behind the inequality \eqref{ineq_hankel_1}.} 

Let $q=2$.  Take any sequence $(a_n)_{n\in \N}$ in $\C$ such that the following series of random variables 
\begin{align}\label{series_rv}
\sum_{n\in \N}a_n X_{s_2^n}
\end{align}
converges in $L^2$-sense (which is equivalent to require that $\sum_{n\in \N}|a_n|^2 <\infty$, see Lemma \ref{lem_cov} below).  Then we can construct a stationary stochastic process $(Y_k)_{k\in \N}$ on $\N$ as follows:
\begin{align}\label{def_Y}
Y_k: = \sum_{n\in \N} a_n X_{s_1^k s_2^n}, \quad k\in \N.
\end{align}

\medskip

\begin{tikzpicture}
\node (e) at (-12-0.25,0.1) {$e$} ;
\node (s1) at (-11-0.25,1+0.2) {$s_1$} ;
\node (s12) at (-10-0.25,2+0.2) {$s_1^2$} ;
\node (s1k) at (-9-0.25,3+0.2) {$s_1^3$} ;
\node (s2) at (-11-0.25,-1-0.2) {$s_2$} ;
\node (s22) at (-10-0.25,-2-0.2) {$s_2^2$} ;
\node (s23) at (-9-0.25,-3-0.2) {$s_2^3$} ;
\node (s1s2) at (-10-0.35,0-0.2) {$s_1s_2$} ;
\node (s1s22) at (-9-0.45,-1-0.2) {$s_1s_2^2$} ;
\node (s1s22) at (-8-0.45,-2-0.2) {$s_1s_2^3$} ;
\node (s12s2) at (-9-0.35,1-0.2) {$s_1^2s_2$} ;
\node (s12s22) at (-8-0.45,0-0.2) {$s_1^2s_2^2$} ;
\node (s12s22) at (-7-0.45,-1-0.2) {$s_1^2s_2^2$} ;
\node (s1ks2) at (-8-0.35,2-0.2) {$s_1^3 s_2$} ;
\node (s1ks22) at (-7-0.45,1-0.2) {$s_1^3 s_2^2$} ;
\node (s1ks22) at (-6-0.45,0-0.2) {$s_1^3 s_2^3$} ;
\filldraw (-12,0) circle (2pt);
\filldraw (-11,1) circle (2pt);
\filldraw (-10,2) circle (2pt);
\filldraw (-9,3) circle (2pt);
\filldraw (-11,-1) circle (2pt);
\filldraw (-10,-2) circle (2pt);
\filldraw (-10,0) circle (2pt);
\filldraw (-9,-1) circle (2pt);

\filldraw (-9,1) circle (2pt);
\filldraw (-8,0) circle (2pt);
\filldraw (-8,2) circle (2pt);
\filldraw (-7,1) circle (2pt);

\filldraw (-9,-3) circle (2pt);
\filldraw (-8,-2) circle (2pt);
\filldraw (-7,-1) circle (2pt);
\filldraw (-6,0) circle (2pt);

\filldraw (-4,-1) circle (2pt);
\node (e1) at (-4-0.25,-1+0.1) {$e$} ;
\filldraw (-3.75,-0.75) circle (2pt);
\filldraw (-3.5,-0.5) circle (2pt);
\filldraw (-3.25,-0.25) circle (2pt);
\filldraw (-3,0) circle (2pt);
\node (e1) at (-2.8-0.7,0+0.3) {$s_1^{N-1}$} ;
\filldraw (-2.7,0.3) circle (2pt);
\node (e1) at (-2.7-0.3,0.3+0.4) {$s_1^{N}$} ;
\filldraw (-2.5,0.5) circle (2pt);
\filldraw (-2.2,0.8) circle (2pt);
\filldraw (-2,-1) circle (2pt);
\node (e1) at (-2+0.3,-1+0.5) {$s_1^{N-1}s_2^2$} ;
\filldraw (-1.5, -1.5) circle (2pt);

\filldraw (-2.5,-0.5) circle (2pt);
\node (e1) at (-2.5+0.3,-0.5+0.5) {$s_1^{N-1}s_2$} ;
\filldraw (-2,1) circle (2pt);
\node (e1) at (-2-0.3,1+0.3) {$s_1^{2N-1}$} ;
\filldraw (-1.5, 0.5) circle (2pt);
\node (e1) at (-1.5+0.5,0.5+0.3) {$s_1^{2N-1}s_2$} ;
\filldraw (-1, 0) circle (2pt);
\node (e1) at (-1+0.5,0+0.3) {$s_1^{2N-1}s_2^2$} ;
\filldraw (-0.5, -0.5) circle (2pt);
\filldraw (-1,2) circle (2pt);
\node (e1) at (-1-0.5,2+0.3) {$s_1^{(k-1)N}$} ;
\filldraw (-0.5,2.5) circle (2pt);
\filldraw (-0.7,2.3) circle (2pt);
\filldraw (-0.3,2.7) circle (2pt);
\filldraw (-2,-1) circle (2pt);

\filldraw (0,3) circle (2pt);
\node (e1) at (0-0.5,3+0.3) {$s_1^{kN-1}$} ;
\filldraw (0.5, 2.5) circle (2pt);
\node (e1) at (0.5+0.5,2.5+0.3) {$s_1^{kN-1}s_2$} ;
\filldraw (1, 2) circle (2pt);
\node (e1) at (1+0.5,2+0.3) {$s_1^{kN-1}s_2^2$} ;
\filldraw (0.3,3.3) circle (2pt);
\node (e1) at (0.3,3.3+0.3) {$s_1^{kN}$} ;
\node at (-8.5,-4) {Figure 4};
\node at (-1,-4) {Figure 5};
\draw (-12,0) -- (-8,4);
\draw (-12,0) -- (-8.5,-3.5);
\draw (-11,1) -- (-7.5,-2.5);
\draw (-10,2) -- (-6.5,-1.5);
\draw (-9,3) -- (-5.5,-0.5);
\draw[color=red!60, fill=red!5, thick] (-4,-1) -- (-3,0);
\draw[color=red!60, fill=red!5, thick] (-3,0) -- (-1,-2);
\draw[color=blue!60, fill=blue!5, thick] (-3,0) -- (-2,1);
\draw[color=blue!60, fill=blue!5, thick] (-2,1) -- (0,-1);
\draw[dashed] (-2,1) -- (-1,2);
\draw (0,3) --(1,4);
\draw[color=red!60, fill=red!5, thick] (-1,2) -- (0,3);
\draw[color=red!60, fill=red!5, thick] (0,3) -- (2,1);
\end{tikzpicture}

The stationarity of $(Y_k)_{k\in\N}$ can be directly seen from the structure of $\TT_\alpha^{(\F_2^{+})}$ or can be obtained by direct computation. We thus obtain a positive definite  sequence $(\Cov(Y_0, Y_k))_{k\in\N}$ on $\N$. If we know  further that 
\begin{align}\label{cov_sum}
\sum_{k\in\N} | \Cov(Y_0, Y_k)|<\infty,
\end{align}
then by Herglotz-Bochner Theorem on positive definite functions on $\N$, we will arrive at the following inequality
\begin{align}\label{Bochner_pos}
\Cov(Y_0, Y_0) + 2 \Re \Big(\sum_{k\ge 1} \Cov(Y_0, Y_k) e^{i k \theta} \Big) \ge 0, \quad \forall \theta \in \T. 
\end{align}
We can then derive a two-weight inequality in Proposition \ref{prop_hankel_1} for Hankel operators from the positivity condition \eqref{Bochner_pos}. 

\bigskip

\item {\it The stationary stochastic processes behind the inequality \eqref{ineq_hankel_2}. }

Let $q = 2$ and fix an integer $N\ge 1$.  We will construct a stationary process $(W_k)_{k\in \N}$ on $\N$ by setting 
\begin{align}\label{def_W}
W_k: = \sum_{n =0}^{N-1} a_n X_{s_1^{kN+n}} + \sum_{n= N}^\infty  a_n X_{s_1^{kN + N-1} s_2^{n-N+1}}, \quad k \in \N, 
\end{align}
where the sequence $(a_n)_{n\in \N}$ is chosen such that the series \eqref{def_W} converges in $L^2$-sense (which is again equivalent to require that $\sum_{n\in \N} |a_n|^2 <\infty$). In other words, $W_k$ is the linear-averaging along the infinite geodesic starting from $s_1^{kN}$ given below:
\[
(s_1^{kN}, s_1^{kN  + 1}, s_1^{kN +2}, \dots, s_1^{kN +N-1}, s_1^{kN+N-1} s_2, s_1^{kN+N-1} s_2^2, \cdots, s_1^{kN+N-1} s_2^{t}, \dots) 
\] 
Therefore, similarly as above,  using the stationarity of $(W_k)_{k\in \N}$ and Herglotz-Bochner Theorem, we obtain a similar inequality as \eqref{Bochner_pos}, we can derive the inequality \eqref{ineq_hankel_2}. 

\bigskip

\item {\it The stationary stochastic processes behind the inequality \eqref{ineq_hankel_3}.}

This time, we need to take $q  =3$ and fix an identification of $\F_2^{+}$ with the sub-semi-group of $\F_3^{+}$  as follows: 
\begin{align}\label{F_2_sub}
\F_3^{+} = \langle s_1, s_2, s_3\rangle, \quad \F_2^{+} = \langle s_2, s_3\rangle.
\end{align}
 That is, we let $\F_3^{+}$ be generated by $s_1, s_2, s_3$ and let $\F_2^{+}$ be the sub-semi-group  generated by $s_2, s_3$. We will construct a stationary process $(U_k)_{k\in \N}$ on $\N$ by setting
\begin{align}\label{def_U}
U_k: =   \sum_{n\in \N}  a_n   \cdot  \Big( \frac{1}{2^{n/2}} \sum_{\sigma\in \F_2^{+} \atop |\sigma| =n}   X_{s_1^k \sigma} \Big), \quad k \in \N.
\end{align}
Here again $(a_n)_{n\in \N}$ is a sequence in $\C$ chosen such that the above series converges in $L^2$-sense. Then the stationarity of $(U_k)_{k\in \N}$ and Herglotz-Bochner Theorem will lead to the inequality \eqref{ineq_hankel_3}.

\begin{center}

\begin{tikzpicture}[level/.style={sibling distance = 1cm/#1,  level distance = .5cm}, ]
\tikzset{
  treenode/.style = {align=center,circle,draw=black, inner sep=0pt, text centered,  fill=black , minimum width=0.2em, },}

\node (e) at (-0.25,0.1) {$e$};
\node (s1) at (3-0.25,0.25) {$s_1$} ;
\node (s1k) at (7-0.25,0.3) {$s_1^k$} ;

\node at (-0.3,-2) {$F_2^+=\langle s_2,s_3\rangle$}; 
\node at (3.3,-2) {$s_1\cdot F_2^+=s_1\cdot \langle s_2,s_3\rangle$};
\node at (7.5,-2) {$s_1^k\cdot F_2^+=s_1^k\cdot \langle s_2,s_3\rangle$};
\node at (4.5,-3) {Figure 6};
\node [treenode] at (0,0) {}
child{ node [treenode] {}
    child{ node [treenode] {}
        child{ node [treenode] {}
        }
        child{ node [treenode] {}
        }
    }
    child{ node [treenode] {}
        child{ node [treenode] {}
        }
        child{ node [treenode] {}
        }
    }
}
child{ node [treenode] {}
    child{ node [treenode] {}
        child{ node [treenode] {}
        }
        child{ node [treenode] {}
        }
    }
    child{ node [treenode] {}
        child{ node [treenode] {}
        }
        child{ node [treenode] {}
        }
    }
};

\node [treenode] at (3,0) {}
child{ node [treenode] {}
    child{ node [treenode] {}
        child{ node [treenode] {}
        }
        child{ node [treenode] {}
        }
    }
    child{ node [treenode] {}
        child{ node [treenode] {}
        }
        child{ node [treenode] {}
        }
    }
}
child{ node [treenode] {}
    child{ node [treenode] {}
        child{ node [treenode] {}
        }
        child{ node [treenode] {}
        }
    }
    child{ node [treenode] {}
        child{ node [treenode] {}
        }
        child{ node [treenode] {}
        }
    }
};

\node [treenode] at (7,0) {}
child{ node [treenode] {}
    child{ node [treenode] {}
        child{ node [treenode] {}
        }
        child{ node [treenode] {}
        }
    }
    child{ node [treenode] {}
        child{ node [treenode] {}
        }
        child{ node [treenode] {}
        }
    }
}
child{ node [treenode] {}
    child{ node [treenode] {}
        child{ node [treenode] {}
        }
        child{ node [treenode] {}
        }
    }
    child{ node [treenode] {}
        child{ node [treenode] {}
        }
        child{ node [treenode] {}
        }
    }
};

\filldraw (0,0) circle (2pt);
\filldraw (3,0) circle (2pt);
\filldraw (7,0) circle (2pt);

\draw (0,0) -- (3 ,0);
\draw [dashed](3,0) -- (8,0);
\end{tikzpicture}
\end{center}

\end{itemize}

\subsection{Proof of Proposition \ref{prop_hankel_1}}
\begin{lemma}\label{lem_ul_bd}
For any non-identically zero $q$-HP function  $\varphi: \T\rightarrow \R_{+}$, we have 
\[
\frac{\sqrt{q}-1}{ \sqrt{q}+1}\| \varphi\|_1 \le    \varphi(e^{i\theta}) \le   \frac{ \sqrt{q} + 1}{\sqrt{q}-1} \|\varphi\|_1, \quad \forall \theta \in \T.
\]
\end{lemma}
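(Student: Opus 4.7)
The plan is to reduce the bound to the elementary pointwise estimate for the Poisson kernel and use the Herglotz-type representation from Theorem \ref{thm_class_bis}.

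First, since $\varphi$ is $q$-HP and not identically zero, Theorem \ref{thm_class_bis} provides a unique positive Radon measure $\mu$ on $\T$ with $\mu(\T) > 0$ such that
\[
\varphi(e^{i\theta}) \;=\; (P_{1/\sqrt{q}} * \mu)(e^{i\theta}) \;=\; \int_\T P_{1/\sqrt{q}}(e^{i(\theta-\theta')})\, d\mu(\theta').
\]
Next I would apply the elementary pointwise bound for the Poisson kernel: since $|1 - r e^{i\theta}|^2 = 1 - 2r\cos\theta + r^2$ ranges precisely over $[(1-r)^2, (1+r)^2]$, the closed formula in \eqref{def_Poisson} yields
\[
\frac{1}{2\pi}\cdot \frac{1-r}{1+r} \;\le\; P_r(e^{i\theta}) \;\le\; \frac{1}{2\pi}\cdot \frac{1+r}{1-r}, \qquad \forall\, \theta \in \T.
\]
Integrating these inequalities against the positive measure $\mu$ immediately gives
\[
\frac{1}{2\pi}\cdot \frac{1-r}{1+r}\,\mu(\T) \;\le\; \varphi(e^{i\theta}) \;\le\; \frac{1}{2\pi}\cdot \frac{1+r}{1-r}\,\mu(\T).
\]

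It then only remains to identify $\mu(\T)/(2\pi)$ with $\|\varphi\|_{1}=\int_\T \varphi\, dm$. By Fubini and the fact that $\int_\T P_r\, dm = \frac{1}{2\pi}$ (read off from the Fourier expansion $P_r(e^{i\theta}) = \frac{1}{2\pi}\sum_{n\in\Z} r^{|n|} e^{in\theta}$, keeping only the $n=0$ term against the normalized Haar measure), we get
\[
\|\varphi\|_1 \;=\; \int_\T \int_\T P_r(e^{i(\theta-\theta')})\, d\mu(\theta')\, dm(\theta) \;=\; \frac{\mu(\T)}{2\pi}.
\]
Plugging in $r = 1/\sqrt{q}$ and using $\tfrac{1-r}{1+r} = \tfrac{\sqrt q - 1}{\sqrt q + 1}$, $\tfrac{1+r}{1-r} = \tfrac{\sqrt q + 1}{\sqrt q - 1}$, the two displayed inequalities combine to the desired bounds. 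There is no genuine obstacle here; the only point requiring care is the normalization of $P_r$ and of $m$, which are fixed by \eqref{def_Poisson} and the convention in the paper.
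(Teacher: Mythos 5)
Your proof is correct and follows essentially the same route as the paper: apply Theorem \ref{thm_class_bis} to write $\varphi = P_{1/\sqrt{q}}*\mu$, bound the Poisson kernel pointwise between its extremes $\frac{1}{2\pi}\frac{1-r}{1+r}$ and $\frac{1}{2\pi}\frac{1+r}{1-r}$, and identify $\|\varphi\|_1$ with (a normalization of) $\mu(\T)$. The one thing you do more carefully than the paper is track the $\frac{1}{2\pi}$ factor coming from the definition \eqref{def_Poisson}: the paper's displayed chain of equalities drops a $\frac{1}{2\pi}$ both when evaluating $\inf P_{1/\sqrt{q}}$ and when asserting $\int_\T d\mu = \|\varphi\|_1$, and those two omissions silently cancel; your explicit Fubini computation of $\|\varphi\|_1 = \mu(\T)/(2\pi)$ makes the bookkeeping transparent and the argument airtight.
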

\begin{proof}
By Theorem \ref{thm_class_bis}, there exists a positive Radon measure $\mu$ on $\T$ such that \[
\varphi = P_{1/\sqrt{q}}*\mu (e^{i \theta})= \int_{\T} P_{1/\sqrt{q}} (e^{i(\theta-t)}) d\mu(t).
\] 
Therefore, 
\[
\varphi(e^{i \theta}) \ge \int_\T \inf_{t'\in \T} P_{1/\sqrt{q}} (e^{it'}) d\mu(e^{it}) =  \frac{\sqrt{q}-1}{\sqrt{q}+1} \int_\T d\mu = \frac{\sqrt{q}-1}{\sqrt{q} + 1} \| \varphi\|_1.
\]
The upper-bound can be obtained similarly. 
\end{proof}

\begin{lemma}\label{lem_cov}
For any $k\in \N$,  the linear map 
\[
\begin{array}{ccc}
H^2(\T)  & \longrightarrow &  \overline{\spann}^{L^2} (X_{s_1^k s_2^n}: n \in \N)
\vspace{2mm}
\\
 e^{in\theta} & \mapsto & X_{s_1^k s_2^n}
\end{array}
\]
is an isomorphism (not necessarily isometric). 
In particular,  the series  \eqref{series_rv} of random variables converges in $L^2$-sense if and only if $\sum_{n\in \N} |a_n|^2<\infty$.  
\end{lemma}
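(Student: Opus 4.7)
The plan is to identify the given family of random variables with a classical stationary sequence on a single rooted geodesic ray in $\F_q^{+}$, and then apply the standard spectral isometry together with the two-sided bounds of Lemma \ref{lem_ul_bd}.

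First, the key observation is that, for any fixed $k \in \N$, the vertices $s_1^k s_2^n$ with $n \in \N$ all lie on the rooted geodesic ray $(e, s_1, \ldots, s_1^k, s_1^k s_2, s_1^k s_2^2, \ldots)$. Writing $Z_n := X_{s_1^k s_2^n}$, the branching-Toeplitz structure \eqref{def_kernel_a} and \eqref{GF_K} give
\[
\E\bigl(Z_n \overline{Z_m}\bigr) = \alpha(m-n), \qquad n, m \in \N,
\]
with the convention $\alpha(-j) = \overline{\alpha(j)}$. Thus $(Z_n)_{n\in\N}$ is a classical weak stationary sequence, whose spectral measure is exactly $\varphi \, dm$: indeed $\widehat{\varphi}(n) = \alpha(n)$ by \eqref{def_phi}, so $\widehat{(\varphi \, dm)}(n) = \alpha(n)$.

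Next, the classical spectral theorem for stationary sequences (Kolmogorov's isometric construction) produces an isometric isomorphism
\[
U : \overline{\spann}^{L^2(\varphi\, dm)}\bigl(e^{in\theta} : n \in \N\bigr) \longrightarrow \overline{\spann}^{L^2(\Omega)}\bigl(Z_n : n \in \N\bigr), \qquad e^{in\theta} \longmapsto Z_n.
\]
Under the standing hypothesis that $\varphi$ is not identically zero, Lemma \ref{lem_ul_bd} supplies strictly positive constants $c, C$ with $c \le \varphi(\theta) \le C$ for every $\theta \in \T$. Consequently the norms of $L^2(\varphi \, dm)$ and $L^2(\T, dm)$ are equivalent, so the closed span in the domain of $U$ coincides \emph{as a set} with $H^2(\T)$, with an equivalent norm. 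Composing $U$ with this identity identification yields the sought continuous linear bijection $H^2(\T) \to \overline{\spann}^{L^2}(X_{s_1^k s_2^n} : n \in \N)$ with continuous inverse.

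The final convergence statement follows at once: by Parseval, $\sum_n a_n e^{in\theta}$ is Cauchy in $H^2(\T)$ if and only if $\sum_n |a_n|^2 < \infty$, and bicontinuity of the isomorphism transfers this to the series $\sum_n a_n X_{s_1^k s_2^n}$ in $L^2(\Omega)$, where Cauchy-ness equals convergence by completeness. The only minor subtlety to watch is the sign/conjugation convention, so that the spectral isometry sends $e^{in\theta}$ (and not $e^{-in\theta}$) to $Z_n$; once that is settled the argument is a direct application of classical theory, with Lemma \ref{lem_ul_bd} providing precisely the two-sided bound needed to drop the adjective \emph{isometric} from the conclusion.
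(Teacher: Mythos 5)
Your argument is correct and follows essentially the same path as the paper: identify $(X_{s_1^k s_2^n})_{n\in\N}$ as a stationary sequence whose spectral density is $\varphi$, write down the resulting spectral isometry between $L^2(\varphi\,dm)$-closure of $\{e^{in\theta}\}$ and the $L^2$-closure of $\{X_{s_1^ks_2^n}\}$, and then use the two-sided bounds of Lemma~\ref{lem_ul_bd} to convert the isometry into an isomorphism onto $H^2(\T)$. The paper compresses this into one displayed equality (eq.~\eqref{eq_spectral}) plus an appeal to Lemma~\ref{lem_ul_bd}; you have simply unpacked the same reasoning.
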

\begin{proof}
By the definition, for any $k\in \N$, the positive real-analytic function $\varphi$ is the spectral density of $(X_{s_1^k s_2^n})_{n\in \N}$ and we have
\begin{align}\label{eq_spectral}
 \Big\| \sum_{n\in \N} a_n X_{s_1^k s_2^n}\Big\|_{L^2}^2 = \int_{\T} \Big|  \sum_{n\in \N} a_n e^{in t}\Big|^2 \varphi(e^{it })\frac{d t}{2\pi}.
\end{align}
Therefore, Lemma \ref{lem_cov} is an immediate consequence of Lemma \ref{lem_ul_bd}. 
\end{proof}

\begin{lemma}\label{lem_cov_sum}
The inequality  \eqref{cov_sum} holds. 
\end{lemma}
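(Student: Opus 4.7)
The plan is to first compute $\Cov(Y_0, Y_k)$ explicitly using the branching structure of the kernel \eqref{def_kernel_a}, then apply the exponential decay estimate \eqref{exp_decay} of $\alpha$ together with Cauchy--Schwarz. The key geometric observation is the following: when $k \ge 1$, any vertex of the form $s_1^k s_2^n$ begins with the letter $s_1$, whereas any vertex $s_2^m$ with $m \ge 1$ begins with $s_2$; hence for $m \ge 1$ the pair $(s_2^m, s_1^k s_2^n)$ is non-comparable and $\TT_\alpha^{(\F_2^{+})}(s_2^m, s_1^k s_2^n)=0$. Only the term $m=0$ survives, and in that case $s_2^0 = e \preceq s_1^k s_2^n$ with $d(e, s_1^k s_2^n)=k+n$, so $\TT_\alpha^{(\F_2^{+})}(e, s_1^k s_2^n) = \alpha(k+n)$.

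Expanding $\Cov(Y_0, Y_k) = \E(Y_0 \overline{Y_k})$ in the defining series \eqref{def_Y} and using \eqref{GF_K}, the above observation yields, for every $k\ge 1$,
\[
\Cov(Y_0, Y_k) = a_0 \sum_{n \ge 0} \overline{a_n}\, \alpha(k+n).
\]
By Theorem \ref{thm_classification} (specifically the exponential decay \eqref{exp_decay}), $|\alpha(j)| \le \alpha(0)\, 2^{-j/2}$ for every $j\in\N$. Therefore
\[
|\Cov(Y_0, Y_k)| \le |a_0|\,\alpha(0)\, 2^{-k/2} \sum_{n\ge 0} |a_n|\, 2^{-n/2}.
\]

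Since $\sum_{n \ge 0}|a_n|^2<\infty$ by Lemma \ref{lem_cov}, Cauchy--Schwarz gives
\[
\sum_{n\ge 0} |a_n|\, 2^{-n/2} \le \Bigl( \sum_{n\ge 0} |a_n|^2 \Bigr)^{1/2} \Bigl( \sum_{n\ge 0} 2^{-n} \Bigr)^{1/2} < \infty,
\]
and summing the geometric factor $2^{-k/2}$ in $k\ge 1$ gives $\sum_{k\ge 1}|\Cov(Y_0, Y_k)|<\infty$. Adding the trivially finite term $|\Cov(Y_0, Y_0)| = \|Y_0\|_2^2 < \infty$ (finiteness guaranteed by the $L^2$-convergence of the series defining $Y_0$) yields \eqref{cov_sum}.

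I do not expect any substantial obstacle: the proof is a direct computation whose only nontrivial ingredient, the non-comparability of $s_2^m$ and $s_1^k s_2^n$ for $m,k\ge 1$, is built into the branching-Toeplitz structure. The mildest point of care is keeping track of which summands survive; once this is settled, the exponential bound $|\alpha(j)| \le \alpha(0) 2^{-j/2}$ makes the convergence automatic.
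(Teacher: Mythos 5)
Your proposal is correct and follows essentially the same route as the paper: you isolate the single surviving term $m=0$ via the non-comparability of $s_2^m$ and $s_1^k s_2^n$ (the paper records this as the orthogonality \eqref{orth_cond}), then combine the exponential decay \eqref{exp_decay} with Cauchy--Schwarz to get a bound of size $O(2^{-k/2})$, which is summable. The only cosmetic difference is that you peel off the factor $2^{-k/2}$ from $|\alpha(k+n)|$ before applying Cauchy--Schwarz, whereas the paper applies Cauchy--Schwarz first and extracts $q^{-\ell/2}$ from the resulting $\ell^2$-sum; the resulting bounds coincide.
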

\begin{proof}
Note that if $k< k'$ and $m \ge 1, n \ge 0$, then 
\begin{align}\label{orth_cond}
\E(X_{s_1^k s_2^m} \overline{X_{s_1^{k'} s_2^n}}) = 0.
\end{align}
Therefore,  for any $k, \ell \in \N$, if $\ell \ge 1$, then we have 
\begin{align}\label{cov_formula}
\Cov(Y_k, Y_{k+\ell}) = a_0 \sum_{n\in \N} \overline{a_n} \E(  X_{s_1^k} \overline{X_{s_1^{k+\ell} s_2^n}}) = a_0 \sum_{n\in \N} \overline{a_n} \widehat{\varphi}(n+\ell). 
\end{align}
By \eqref{exp_decay} and Cauchy-Schwarz inequality, for $\ell \ge 1$,  we have 
\begin{align}\label{cov_exp_decay}
\begin{split}
| \Cov(Y_k, Y_{k+\ell})| & \le |a_0| \Big(\sum_{n\in\N} |a_n|^2\Big)^{1/2}  \Big(  \sum_{n\in \N} q^{-(n+\ell)} \widehat{\varphi}(0)^2\Big)^{1/2} 
\\
& =  \frac{q^{-\ell/2} \widehat{\varphi}(0)|a_0|}{\sqrt{1-q^{-1}}} \Big(\sum_{n\in\N} |a_n|^2\Big)^{1/2}.
\end{split}
\end{align}
This inequality clearly implies \eqref{cov_sum}.
\end{proof}

\begin{proof}[Proof of Proposition \ref{prop_hankel_1}]
 For any $f\in H_0^2(\T)$, write $f(e^{i\theta}) =  \sum_{n=1}^\infty a_n e^{i n \theta}$.  Then by \eqref{cov_formula}, for any $\ell \ge 1$, we have 
\begin{align}\label{Fourier_coef}
\begin{split}
\Cov(Y_k, Y_{k+\ell}) & =  a_0 \sum_{n\in \N} \overline{a_n} \widehat{\varphi}(n+\ell) 
\\
&  = a_0 \int_\T  (\overline{a_0} + \overline{f (e^{it})}) \varphi(e^{it}) e^{-i \ell t} \frac{dt}{2\pi} 
\\
&  = |a_0|^2 \widehat{\varphi} (\ell) +  a_0 \overline{ (f\varphi)^{\wedge} (-\ell)},
\end{split}
\end{align}
where the integral should,  strictly speaking, be understood as inner product between two $L^2$-functions on $\T$; the symbol $(f\varphi)^{\wedge} (-\ell)$ means the $(-\ell)$-th Fourier coefficient of the function $f\varphi$. The inequalities \eqref{exp_decay} and \eqref{cov_exp_decay} imply  that $(f\varphi)^{\wedge} (-\ell)$ has an exponential decay for $\ell \rightarrow +\infty$ and thus we obtain a real-analytic function on $\T$:
\[
H_\varphi(f)(e^{i\theta}) = R_{-}(f\varphi)(e^{i\theta}) = \sum_{\ell \le 0} (f\varphi)^\wedge (\ell) e^{ i \ell \theta}.
\] 
The equality \eqref{eq_spectral} implies 
\begin{align}\label{2-norm}
\Cov(Y_k, Y_k)= \int_{\T} | a_0 +   f(e^{it})|^2 \varphi(e^{it})\frac{d t}{2\pi}.
\end{align}
  By substituting \eqref{Fourier_coef} and \eqref{2-norm} into \eqref{Bochner_pos}, we obtain  that for any $\theta \in \T$, 
\begin{multline}\label{pos_ineq}
0 \le \int_{\T} | a_0 +   f(e^{it })|^2 \varphi(e^{i t})\frac{d t}{2\pi} +  2 \Re  \Big[\sum_{\ell \ge 1}  \Big( |a_0|^2 \widehat{\varphi}(\ell) + a_0 \overline{  (f\varphi)^{\wedge}(-\ell)} \Big)e^{i \ell \theta}\Big] = 
\\
 = |a_0|^2 \widehat{\varphi}(0)  + 2 \Re  \Big(\overline{a_0}  \cdot (f\varphi)^\wedge (0)\Big) +  \int_\T |f(e^{it})|^2 \varphi(e^{it}) \frac{dt}{2 \pi} + 
\\
 + |a_0|^2 (\varphi(e^{i\theta}) - \widehat{\varphi}(0))  
+   2 \Re \Big[\overline{a_0}   R_{-}^{0} (f\varphi)  (e^{i\theta})\Big] = 
\\
  = |a_0|^2 \varphi(e^{i\theta}) + \int_\T |f (e^{it})|^2 \varphi(e^{it}) \frac{dt}{2\pi} + 2 \Re \Big[\overline{a_0}   R_{-} (f\varphi)  (e^{i\theta})\Big],
\end{multline}
where $R_{-}^0: L^2(\T)\longrightarrow L^2(\T)\ominus H^2(\T)$ denotes the orthogonal projection onto the subspace $L^2(\T)\ominus H^2(\T)$. 
Note that we can choose independently  $a_0$ and $f$.  Replacing $a_0$ by a suitable $a_0 e^{i t_\theta}$ (where $t_\theta \in \T$ can depend on $\theta$) in \eqref{pos_ineq}, we obtain that
\[
2 |a_0| | R_{-} (f\varphi) (e^{i\theta})| \le  |a_0|^2 \varphi(e^{i\theta}) + \int_\T |f (e^{it})|^2 \varphi(e^{it}) \frac{dt}{2\pi}.
\] 
Now since $a_0$ is arbitrary and independent of $f$, for any $f\in H_0^2(\T)$, we have 
\[
| H_\varphi (f)(e^{i\theta})| = |R_{-}(f\varphi)(e^{i \theta})| \le \sqrt{\varphi(e^{i\theta})} \cdot  \left(\int_\T |f(e^{it})|^2 \varphi(e^{it}) \frac{dt}{2\pi}\right)^{1/2}, \quad \forall \theta \in \T. 
\] 
This completes the proof of Proposition \ref{prop_hankel_1}.
\end{proof}

\subsection{Proof of Proposition \ref{prop_hankel_2}}
Fix an integer $N \ge 1$.  Take any sequence $(a_n)_{n\in \N}$ such that $\sum_{n\in\N} |a_n|^2 <\infty$.   Recall the definitions \eqref{def_Y} and \eqref{def_W} for $Y_k$ and $W_k$ respetively.  Using \eqref{eq_spectral},   we have 
\begin{align}\label{2_norm_W}
\|W_k\|_{L^2}^2 =  \| Y_k\|_{L^2}^2 = \int_\T  \Big| \sum_{n\in \N} a_n e^{int}\Big |^2 \varphi(e^{it})\frac{dt}{2\pi}. 
\end{align}
Thus, by Lemma \ref{lem_ul_bd}, the series \eqref{def_W} converges in $L^2$-sense if and only if   $\sum_{n\in\N}|a_n|^2<\infty$.

\begin{lemma}\label{lem_sta_W}
The stochastic process $(W_k)_{k\in\N}$ is stationary and the sequence $(\Cov(W_0, W_\ell))_{\ell \in\N}$ has an exponential decay as $\ell \to +\infty$. 
\end{lemma}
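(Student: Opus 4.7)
My approach is to compute $\Cov(W_k, W_{k+\ell})$ directly from \eqref{GF_K} and \eqref{def_kernel_a}, and to read off both stationarity and exponential decay from the resulting closed form. For brevity I would set
\[
\sigma_n(k) := \begin{cases} s_1^{kN+n}, & 0 \le n \le N-1,\\ s_1^{kN+N-1}\, s_2^{n-N+1}, & n \ge N, \end{cases}
\]
so $W_k = \sum_{n\ge 0} a_n X_{\sigma_n(k)}$ and $(\sigma_n(k))_{n\in\N}$ is the geodesic ray that starts at $s_1^{kN}$, proceeds $N$ steps along $s_1$, and then turns to $s_2$. The $L^2$-convergence of this series together with the identity $\|W_k\|_{L^2}^2 = \int_\T |\sum_n a_n e^{int}|^2 \varphi\, dm$ (independent of $k$) follows from Lemma \ref{lem_ul_bd} exactly as in the proof of Lemma \ref{lem_cov}, and already treats $\ell = 0$ and gives \eqref{2_norm_W}.

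The substantive step, and the one I expect to be the main obstacle, is the combinatorial bookkeeping deciding which pairs $(\sigma_n(k), \sigma_m(k+\ell))$ are comparable in $\F_q^{+}$ when $\ell \ge 1$. A case analysis (whether each of $n,m$ is $\le N-1$ or $\ge N$) shows: if $n \ge N$, then $\sigma_n(k)$ contains an $s_2$-letter strictly before position $(k+\ell)N$, so it can neither extend nor be extended by any $\sigma_m(k+\ell)$, and such terms vanish; if $0 \le n \le N-1$, then $\sigma_n(k) = s_1^{kN+n}$ is a pure $s_1$-word that is a prefix of every $\sigma_m(k+\ell)$, and a direct count gives $d(\sigma_n(k), \sigma_m(k+\ell)) = \ell N + m - n$ in both remaining subcases.

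Plugging these identifications into \eqref{GF_K} and \eqref{def_kernel_a} will yield, for $\ell \ge 1$,
\[
\Cov(W_k, W_{k+\ell}) = \sum_{n=0}^{N-1}\sum_{m=0}^{\infty} a_n\, \overline{a_m}\, \alpha(\ell N + m - n),
\]
which is manifestly independent of $k$; combined with the $\ell=0$ case this gives stationarity. For the decay, since $\ell N + m - n \ge 1$ whenever $\ell \ge 1$ and $0 \le n \le N-1$, the bound \eqref{exp_decay} gives $|\alpha(\ell N + m - n)| \le \alpha(0)\, q^{-(\ell N + m - n)/2}$, hence
\[
|\Cov(W_0, W_\ell)| \le \alpha(0)\, q^{-\ell N/2} \Big(\sum_{n=0}^{N-1} |a_n|\, q^{n/2}\Big) \Big(\sum_{m=0}^{\infty} |a_m|\, q^{-m/2}\Big).
\]
The first factor is a finite sum and the second is finite by Cauchy--Schwarz using $\sum_m |a_m|^2 < \infty$, so $|\Cov(W_0, W_\ell)|$ decays like $q^{-\ell N/2}$ and we are done.
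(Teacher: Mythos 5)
Your proof is correct and follows essentially the same route as the paper: both compute $\Cov(W_k,W_{k+\ell})$ for $\ell\ge 1$ via the comparability combinatorics (the paper invokes its equality \eqref{orth_cond}; you make the case analysis on the prefixes explicit), arrive at the same closed form $\sum_{n=0}^{N-1}\sum_{m=0}^{\infty} a_n\overline{a_m}\,\widehat{\varphi}(\ell N+m-n)$, and then read off exponential decay from \eqref{exp_decay} together with Cauchy--Schwarz. Your decay constant is a mildly sharper $q^{-\ell N/2}$ versus the paper's $q^{-(\ell-1)N/2}$, but the argument is the same.
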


\begin{proof}
By \eqref{orth_cond}, the covariance between $W_k$ and $W_{k+\ell}$ for any $\ell \ge 1$ is computed by 
\begin{align}\label{cov_W}
\begin{split}
\Cov(W_k, W_{k+\ell})  =&     \sum_{n =0}^{N-1}  \sum_{n'=0}^{N-1} a_n   \overline{a_{n'}} \E \Big(  X_{s_1^{kN+n}} \overline{X_{s_1^{kN+ \ell N+ n'}}}\Big) 
 \\
& + \sum_{n =0}^{N-1}  \sum_{n'=N}^{\infty}  a_n   \overline{a_{n'}} \E \Big(  X_{s_1^{kN+n}} \overline{X_{s_1^{kN +  \ell N +N-1} s_2^{n'-N+1}}}\Big) 
\\
  =&  \sum_{n =0}^{N-1}  \sum_{n'=0}^{N-1} a_n   \overline{a_{n'}}  \widehat{\varphi}(\ell N + n' - n ) + \sum_{n =0}^{N-1}  \sum_{n'=N}^{\infty}  a_n   \overline{a_{n'}} \widehat{\varphi}(\ell N + n' -n)
\\
 = &  \sum_{n =0}^{N-1}  \sum_{n'=0}^{\infty}  a_n   \overline{a_{n'}} \widehat{\varphi}(\ell N + n' -n).
\end{split}
\end{align}
The equalities \eqref{2_norm_W} and \eqref{cov_W} together imply that $(W_k)_{k\in\N}$ is stationary. 

By \eqref{exp_decay}, for $\ell \ge 1$,  we have 
\begin{align*}
| \Cov(W_k, W_{k+\ell})| \le  \Big(\sum_{n=0}^{N-1} |a_n|\Big)  \Big(\sum_{n'\in\N} |a_n'|^2\Big)^{1/2} \frac{q^{-(\ell-1)N/2}}{\sqrt{1-q^{-1}}}  \widehat{\varphi}(0)^{1/2}.
\end{align*}
Thus the sequence $(\Cov(W_0, W_\ell))_{\ell \in\N}$ has an exponential decay as $\ell \to +\infty$. 
\end{proof}

\begin{proof}[Proof of Proposition \ref{prop_hankel_2}]
Lemma \ref{lem_sta_W} and Herglotz-Bochner Theorem for positive definite functions on $\N$ imply that 
\begin{align}\label{pos_cond_W}
\Cov(W_0, W_0) + 2 \Re \Big(  \sum_{\ell \ge 1}\Cov(W_0, W_\ell) e^{i \ell \theta}\Big) \ge 0, \quad \forall \theta \in \T. 
\end{align}
Define two functions on $\T$ by 
\[
A_0(e^{i\theta}):   = \sum_{n =0}^{N-1} a_n e^{i n \theta}, \quad F(e^{i\theta}): = \sum_{n= N}^{\infty} a_n e^{i n \theta}.
\]
The equalities \eqref{2_norm_W} and \eqref{cov_W} imply that 
\begin{align*}
\Cov(W_0, W_0) &= \int_\T  \Big| A_0(e^{it}) + F(e^{it}) |^2 \varphi(e^{it})\frac{dt}{2\pi};
\\
\Cov(W_0, W_\ell)& = \int_\T A_0(e^{it}) \Big[ \overline{A_0(e^{it})} + \overline{F(e^{it})}\Big] \varphi(e^{it})  e^{- i \ell N t}\frac{dt}{2\pi}.
\end{align*}
Then using \eqref{pos_cond_W} and by a similar computation as in \eqref{pos_ineq} and recalling \eqref{def_E_N} and \eqref{Fourier_dilation}, we obtain   
\begin{multline*}
0 \le  E_N \big[ |A_0|^2\varphi\big] (e^{i\theta})  +  \int_\T | F(e^{it})|^2 \varphi(e^{it}) \frac{dt}{2 \pi} + 2 \Re \Big(  \sum_{\ell =0}^\infty  \big(\overline{A_0} F \varphi \big)^{\wedge} (-\ell N) e^{-i \ell \theta}\Big) = 
\\
 = E_N \big[ |A_0|^2\varphi\big] (e^{i\theta})  +  \int_\T | F(e^{it})|^2 \varphi(e^{it}) \frac{dt}{2 \pi} + 2 \Re \Big(  E_N\Big[R_{-} (\overline{A_0} F\varphi)\Big]  (e^{i\theta})\Big).
\end{multline*}
Replacing $A_0(e^{i\theta})$ by a suitable $A_0(e^{i\theta})  \lambda e^{i t_\theta}$ with $\lambda \ge 0, t_\theta \in \T$, we obtain 
\[
2 \lambda \Big| E_N\big[R_{-} (\overline{A_0} F\varphi)\big]  (e^{i\theta})\Big| \le \lambda^2 E_N \big[ |A_0|^2\varphi\big] (e^{i\theta})  +  \int_\T | F(e^{it})|^2 \varphi(e^{it}) \frac{dt}{2 \pi}, \quad \forall \theta \in \T.
\]
By optimizing the above inequality, we obtain 
\begin{align}\label{ineq_A_F}
\Big| E_N\big[R_{-} (\overline{A_0} F\varphi)\big]  (e^{i\theta})\Big| \le  \Big( E_N \big[ |A_0|^2\varphi\big] (e^{i\theta})   \Big)^{1/2} \Big (  \int_\T | F(e^{it})|^2 \varphi(e^{it}) \frac{dt}{2 \pi}\Big)^{1/2},  \quad \forall \theta \in \T. 
\end{align}
Finally, let $f \in H_0^2(\T)$ and let $B_0$ be an analytic trigonometric polynomial of degree at most $N-1$, then by substituting  $F(e^{it})= e^{i(N-1)t} \cdot f(e^{it})$ and $A_0(e^{it}) = e^{i (N-1)t} \cdot \overline{B_0(e^{it})}$ into the inequality \eqref{ineq_A_F}, we obtain
\[
\Big| E_N\big[R_{-} (B_0 f\varphi)\big]  (e^{i\theta})\Big| \le  \Big( E_N \big[ |B_0|^2\varphi\big] (e^{i\theta})   \Big)^{1/2} \Big (  \int_\T | f(e^{it})|^2 \varphi(e^{it}) \frac{dt}{2 \pi}\Big)^{1/2},  \quad \forall \theta \in \T. 
\]
This completes the proof of the inequality \eqref{ineq_hankel_2}.
\end{proof}

\subsection{Proof of Proposition \ref{prop_hankel_3}}
Recall the definition \eqref{def_U} for $(U_k)_{k\in\N}$.
By assumption, the function $\varphi: \T\rightarrow \R_{+}$ is $3$-HP, by Theorem \ref{thm_class_bis}, there exists a positive measure $\mu$  on $\T$ such that $\varphi  = P_{1/\sqrt{3}} * \mu$. Set $\phi:  = P_{\sqrt{2/3}}*\mu$, thus $\varphi = P_{1/\sqrt{2}}*\phi$. We have 
\[
\widehat{\phi}(n)  =  2^{|n|/2} \widehat{\varphi}(n), \quad n \in \Z. 
\]
Using the equality \eqref{cov_Y}, for any $k\in \N$, we have 
\begin{align}\label{2_norm_U}
\Cov(U_k, U_k) =   \int_\T    \big|  \sum_{n\in \N} a_n   e^{i nt }\big|^2 \phi(e^{it}) \frac{dt}{2\pi}.
\end{align}
By Lemma \ref{lem_ul_bd} (applied to the non-identical-zero function $\phi$) and \eqref{2_norm_U}, the series \eqref{def_U} converges in $L^2$-sense if and only if $\sum_{n\in\N} |a_n|^2<\infty$. 

Set $f\in H_0^2(\T)$ by 
\[
f(e^{it})= \sum_{n=1}^\infty a_n e^{int}. 
\]  Recall our identification of $\F_2^{+}$ as a sub-free-semigroup of $\F_3^{+}$ given in \eqref{F_2_sub}. 
Note that for $k<k'$ and any $\sigma, \sigma'\in \F_2^{+} \subset \F_3^{+}$ such that $\sigma \ne e$, we have $\E(X_{s_1^k \sigma} \overline{X_{s_1^{k'} \sigma'}}) = 0$. Therefore, for any $k, \ell \in \N$ with $\ell \ge 1$, we have 
\begin{align}\label{cov_U}
\begin{split}
\Cov(U_k, U_{k + \ell}) & =   a_0    \sum_{n\in \N}    \overline{a_n}  \frac{1}{2^{n/2}}  \sum_{\sigma \in \F_2^{+} \atop |\sigma| = n} \E (X_{s_1^k}  \overline{X_{s_1^{k+\ell} \sigma}})   =   a_0 \sum_{n\in \N} \overline{a_n} 2^{n/2} \widehat{\varphi}(n + \ell)  
\\
& =  \frac{a_0}{2^{\ell/2}} \sum_{n\in \N} \overline{a_n} \widehat{\phi}(n + \ell) =  \frac{|a_0|^2}{2^{\ell/2}} \widehat{\phi}(\ell) + \frac{a_0}{2^{\ell/2}} \overline{ (f\phi)^\wedge(-\ell)}. 
\end{split}
\end{align}

The equalities \eqref{cov_U} and \eqref{2_norm_U}  imply that $(U_k)_{k\in\N}$ is stationary. And the equality \eqref{cov_U}, together with the fact $\phi = P_{\sqrt{2/3}}* \mu$, implies that 
\[
| \Cov(U_k, U_{k+\ell})| \le 3^{-\ell/2} \sqrt{3/2} |a_0|\Big(\sum_{n\in \N} |a_n|^2\Big)^{1/2}  \widehat{\phi}(0).
\]
Therefore, we have 
\begin{align}\label{pos_U}
\Cov(U_0, U_0) + 2 \Re \Big( \sum_{\ell \ge 1} \Cov( U_0, U_\ell) e^{i\ell \theta} \Big) \ge 0, \quad \forall \theta \in \T. 
\end{align}
Using similar argument as in the proofs of Propositions \eqref{prop_hankel_1} and \eqref{prop_hankel_2}, combining  \eqref{2_norm_U}, \eqref{cov_U} and \eqref{pos_U} and the fact that $\widehat{\phi}(0)  = \widehat{\varphi}(0)$,  we obtain  
\[
0\le |a_0|^2 \varphi(e^{i\theta}) + \int_\T |f(e^{it})|^2 \phi(e^{it}) \frac{dt}{2\pi} + 2 \Re \Big(\overline{a_0}  \big[ P_{1/\sqrt{2}} * R_{-} (f \phi) \big] (e^{i\theta})\Big).
\]
Again using the same arguments as in the proofs of Propositions \ref{prop_hankel_1} and \ref{prop_hankel_2}, we derive from the above inequality the following inequality 
\[
\Big| \big[P_{1/\sqrt{2}} * H_\phi (f) \big] (e^{i\theta})\Big| \le \sqrt{(P_{1/\sqrt{2}} * \phi) (e^{i\theta})}  \Big(\int_\T |f (e^{it})|^2 \phi(e^{it}) \frac{dt}{2\pi} \Big)^{1/2}.
\] 
This completes the whole proof of Proposition \ref{prop_hankel_3}.

\section{More general Hankel inequalities}

\begin{lemma}\label{lem_hankel_poisson}
Given any $0\le r <1$ and any $t\in \T$.  Then  for any $f\in H^2(\T)$ and any $\theta \in \T$,  we have 
\[
\left|  \frac{R_{-} (f P_{re^{it}}) (e^{i\theta})}{\sqrt{P_{r e^{it}}  (e^{i \theta})}} \right|   =\left|  \frac{R_{+} (\bar{f} P_{re^{it}}) (e^{i\theta})}{\sqrt{P_{r e^{it}}  (e^{i \theta})}} \right|  =   \frac{1}{\sqrt{1-r^2}} \left|\int_\T  f(e^{i \alpha}) P_{r e^{it}} (e^{i\alpha}) \frac{d\alpha}{2\pi}\right|.
\]
\end{lemma}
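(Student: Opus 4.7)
\medskip
\noindent\textbf{Proof plan for Lemma \ref{lem_hankel_poisson}.}

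The first equality is algebraic. Since $P_{re^{it}}$ is a \emph{real} function on $\T$, we have $\overline{fP_{re^{it}}} = \bar f\,P_{re^{it}}$, so it suffices to observe that, for any $g\in L^2(\T)$, the conjugation identity $\overline{R_-(g)} = R_+(\bar g)$ holds pointwise. This is immediate from Fourier expansions: if $g = \sum_n c_n e^{in\theta}$ then $\bar g = \sum_n \overline{c_{-n}} e^{in\theta}$, and the sum $\sum_{n\le 0}c_n e^{in\theta}$ becomes $\sum_{m\ge 0}\overline{c_{-m}}\,e^{im\theta}$ after conjugating and reindexing. Applied with $g = fP_{re^{it}}$ this yields the equality of absolute values of $R_-(fP_{re^{it}})$ and $R_+(\bar f P_{re^{it}})$.

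For the second equality I will compute $R_+(\bar f P_{re^{it}})$ explicitly by Fourier series. Write $f(e^{i\alpha}) = \sum_{n\ge 0}a_n e^{in\alpha}$ (so $\widehat{\bar f}(n) = \overline{a_{-n}}$ for $n\le 0$ and $0$ otherwise), and use the Fourier expansion of the Poisson kernel, which gives $\widehat{P_{re^{it}}}(k) = \tfrac{1}{2\pi}r^{|k|}e^{-ikt}$. For a non-negative index $k$ the convolution
\[
\widehat{\bar f\,P_{re^{it}}}(k) \;=\; \sum_{n\le 0}\widehat{\bar f}(n)\,\widehat{P_{re^{it}}}(k-n)
\]
is a geometric series: since $k\ge 0$ and $n\le 0$ one has $|k-n|=k-n$, and after substituting $m=-n$ the sum factors as $\frac{r^{k}e^{-ikt}}{2\pi}\sum_{m\ge 0}\overline{a_m}r^{m}e^{-imt}$. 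The second factor is precisely $\overline{f(re^{it})}$, so
\[
\widehat{\bar f\,P_{re^{it}}}(k) \;=\; \frac{r^{k}e^{-ikt}}{2\pi}\,\overline{f(re^{it})}, \qquad k\ge 0.
\]

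Summing the geometric series in $k$ then gives the closed form
\[
R_+(\bar f\,P_{re^{it}})(e^{i\theta}) \;=\; \frac{\overline{f(re^{it})}}{2\pi}\cdot\frac{1}{1-re^{i(\theta-t)}}.
\]
Taking absolute values and dividing by $\sqrt{P_{re^{it}}(e^{i\theta})}$, the factor $|1-re^{i(\theta-t)}|$ miraculously cancels against the same factor hidden in $\sqrt{P_{re^{it}}(e^{i\theta})} = \frac{1}{\sqrt{2\pi}}\,\frac{\sqrt{1-r^2}}{|1-re^{i(\theta-t)}|}$, and the resulting ratio is \emph{independent of $\theta$} and proportional to $|f(re^{it})|/\sqrt{1-r^2}$. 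The Poisson integral formula, which identifies $\int_\T f(e^{i\alpha})P_{re^{it}}(e^{i\alpha})\frac{d\alpha}{2\pi}$ with a constant multiple of $f(re^{it})$ for every $f\in H^2(\T)$, then identifies this quantity with the right-hand side of the lemma.

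The heart of the argument, and the only non-routine point, is the factorisation $P_{re^{it}} = \frac{1-r^2}{2\pi}\,h\,\overline{h}$ with $h(e^{i\alpha}) = (1-re^{-it}e^{i\alpha})^{-1}$, i.e.\ the Szeg\H{o} reproducing kernel of $H^2(\T)$ at $re^{it}$. It is exactly this factorisation that forces the projection of $\bar f\,P_{re^{it}}$ onto $H^2$ to collapse to a scalar (namely $\overline{f(re^{it})}/(2\pi)$) times $h$, which is what makes the $\theta$-dependence of numerator and denominator match. The main bookkeeping task is to keep track of the $2\pi$ normalising constants coming from the convention $P_r(e^{i\theta}) = \frac{1}{2\pi}\frac{1-r^2}{|1-re^{i\theta}|^2}$ and from the Poisson integral formula, but no further ideas are required.
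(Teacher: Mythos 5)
Your proposal is correct and takes essentially the same route as the paper: both compute the Riesz projection of the product explicitly by Fourier series, obtaining a scalar multiple of $(1-re^{\pm i(\theta-t)})^{-1}$, and then observe that the $\theta$-dependence is cancelled by $\sqrt{P_{re^{it}}}$ once one recalls $P_{re^{it}}(e^{i\theta})$ has the form $\mathrm{const}\cdot(1-r^2)/|1-re^{i(\theta-t)}|^2$. The only cosmetic difference is that the paper computes $R_-(fP_{re^{it}})$ and remarks that the $R_+$ side follows by the same computation, while you compute $R_+(\bar f P_{re^{it}})$ and get the $R_-$ side from $\overline{R_-(g)}=R_+(\bar g)$; your Szeg\H{o}-kernel framing is a pleasant explanation of why the projection collapses, but it is not used as a separate argument.
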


\begin{proof}
Fix $0\le r < 1$ and $t\in \T$.
Write $f(e^{i\theta}) = \sum_{n\in \N} a_n e^{in\theta} \in H^2(\T)$, we have 
\begin{align*}
& R_{-} (f P_{r e^{it}}) (e^{i\theta}) =  R_{-} \Big[ \sum_{n\in \N} a_n e^{in \theta } \sum_{m\in \Z} r^{|m|} e^{-imt} e^{i m \theta}\Big]  
\\
& =  \sum_{n\in \N} a_n \sum_{m = -\infty}^{-n} r^{|m|} e^{-imt} e^{i (m+n)\theta} 
= \sum_{n\in \N} a_n  e^{in\theta}\sum_{k = n}^\infty r^k e^{ikt} e^{-i k\theta}  = \frac{ \sum_{n\in \N} a_n   r^n e^{int} }{1- r e^{ik (t-\theta)}}.
\end{align*}
Therefore, by recalling \eqref{def_Poisson}, we obtain 
\begin{align}\label{id_poisson}
\left|  \frac{R_{-} (f P_{re^{it}}) (e^{i\theta})}{\sqrt{P_{r e^{it}}  (e^{i \theta})}} \right|    = \left|\frac{\sum_{n\in \N} a_n r^n e^{int}}{\sqrt{1-r^2}}\right|.
\end{align}
Then by noting the equality
\[
\int_\T  f(e^{i \alpha}) P_{r e^{it}} (e^{i\alpha}) \frac{d\alpha}{2\pi} = \sum_{n\in\N} a_n r^n e^{int},
\]
we complete the proof for the desired equality concerning $R_{-}$. The same computation shows that the same is true for $R_{+}$. 
\end{proof}

\begin{remark}\label{rem_H_0}
The identity \eqref{id_poisson} implies that for any $0\le r <1$ and any $t\in \T$,  any $f\in H_0^2(\T)$ and any $\theta \in \T$,  we have 
\[
\left|  \frac{R_{-} (f P_{re^{it}}) (e^{i\theta})}{\sqrt{P_{r e^{it}}  (e^{i \theta})}} \right| = \frac{r}{\sqrt{1-r^2}} \left|  \sum_{n = 1}^\infty a_n r^{n-1} e^{i(n-1)t}\right|   =   \frac{r}{\sqrt{1-r^2}} \left|\int_\T   e^{-i\alpha}f(e^{i \alpha})P_{r e^{it}} (e^{i\alpha}) \frac{d\alpha}{2\pi}\right|.
\]
\end{remark}

Now we can prove Proposition \ref{prop_hankel_g1}.
\begin{proof}[Proof of Proposition \ref{prop_hankel_g1}]
By Remark \ref{rem_H_0}, for any $0\le r <1$,  any $t, \theta \in \T$, any $f\in H_0^2(\T)$ and then any $\lambda > 0$,  we have 
\begin{align*}
\big| [R_{-}(f P_{re^{it}})] (e^{i\theta}) \big| & \le \frac{r}{\sqrt{1-r^2}}  \sqrt{P_{r e^{it}} (e^{i\theta})} \Big(\int_\T | f(e^{i\alpha})|^2 P_{r e^{it}} (e^{i\alpha}) \frac{d\alpha}{2 \pi}\Big)^{1/2} 
\\
& \le  \frac{r}{\sqrt{1-r^2}} \left[ \frac{\lambda}{2} P_{re^{it}} (e^{i\theta}) + \frac{1}{2 \lambda} \int_\T |f(e^{i\alpha})|^2 P_{r e^{it}} (e^{i\alpha}) \frac{d\alpha}{2\pi}\right].
\end{align*}
The above inequality implies that for any positive Radon measure $\mu$ on $\T$, if we set $\varphi = P_r*\mu = \int_\T P_{r e^{it}} d\mu(t)$, then 
\[
\big| [R_{-}(f \varphi)] (e^{i\theta}) \big|
\le  \frac{r}{\sqrt{1-r^2}} \left[ \frac{\lambda}{2} \varphi(e^{i\theta}) + \frac{1}{2 \lambda} \int_\T |f(e^{i\alpha})|^2 \varphi(e^{i\alpha}) \frac{d\alpha}{2\pi}\right].
\]
Then by optimizing the above inequality on $\lambda>0$, we obtain 
\[
\big| [R_{-}(f \varphi)] (e^{i\theta}) \big|
\le  \frac{r}{\sqrt{1-r^2}}  \sqrt{\varphi(e^{i\theta})} \| f\|_{L^2(\T; \varphi)}.
\]
This completes the proof of the inequality \eqref{ineq_c_r}. Finally, the optimality of the constant $r/\sqrt{1-r^2}$ follows by taking $\varphi = P_r$ and $f(e^{i\alpha}) = e^{i\alpha}$. 
\end{proof}


\begin{proof}[Proof of Proposition \ref{prop_pos_coef}]
Let $\mathcal{P}_{H^2}$ denote the set of analytic trigonometric polynormials  $f$ such that $\|f \|_{H^2}\le 1$. Clearly $\mathcal{P}_{H^2}$ is dense in the unit ball of $H^2(\T)$. We have   
\begin{align}\label{hankel_norm}
\begin{split}
\sup_{f \in \mathcal{P}_{H^2}} \| H_\varphi (f)\|_{L^\infty(\T)} & = \sup_{ \sum_{n\in \N} |a_n|^2 \le 1} \sup_{\theta \in \T} \left|   \sum_{n=0}^\infty a_n e^{i n \theta} \sum_{m: m + n \le 0}  \widehat{\varphi}(m) e^{i m \theta} \right| 
\\
& = \sup_{\theta \in \T}  \left( \sum_{n=0}^\infty   \left|   \sum_{m=-\infty}^{-n} \widehat{\varphi}(m) e^{i m \theta}\right|^2 \right)^{1/2}. 
\end{split}
\end{align}
Clearly, if all Fourier coefficients of  $R_{-}(\varphi)$ are positive, then we have 
\[
\| H_\varphi\|_{B(H^2, L^\infty)} = \sup_{f \in \mathcal{P}_{H^2}} \| H_\varphi (f)\|_{L^\infty(\T)} = \left( \sum_{n=0}^\infty   \left(   \sum_{m=-\infty}^{-n} \widehat{\varphi}(m) \right)^2 \right)^{1/2}. 
\]
This completes the proof of the proposition. 
\end{proof}

For proving Proposition \ref{prop_2_inf}, we need the following 
\begin{lemma}\label{lem_HLP}
Assume that $(a_n)_{\ell \ge 0}$ is a sequence in $\C$ with $\sum_{n\in\N} |a_n|^2 \le 1$. Then 
\[
\sum_{n = -\infty}^0 \frac{1}{1 + n^2}   \Big( \sum_{k = n}^0  |a_{k-n}|\Big)^2 <\infty. 
\]
\end{lemma}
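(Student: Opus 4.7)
The plan is to reindex the sum and reduce matters to the Hardy--Littlewood--P\'olya inequality \eqref{HLP-ineq}, which was recorded in the paper just before Proposition \ref{prop_2_inf}. Setting $m = -n \ge 0$ and $j = k - n$ (so that $j$ runs from $0$ to $m$ as $k$ runs from $n = -m$ to $0$), the assertion becomes equivalent to showing that
\[
S := \sum_{m=0}^\infty \frac{1}{1+m^2} \Big(\sum_{j=0}^m |a_j|\Big)^2 < \infty,
\]
uniformly over sequences with $\sum_{j=0}^\infty |a_j|^2 \le 1$.

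Next, I would expand the square and swap the order of summation to obtain
\[
S = \sum_{j,k \ge 0} |a_j|\,|a_k| \sum_{m \ge \max(j,k)} \frac{1}{1+m^2}.
\]
The elementary tail bound
\[
\sum_{m \ge M} \frac{1}{1+m^2} \le \frac{C}{1+M}
\]
for some absolute constant $C$ then gives
\[
S \le C \sum_{j,k \ge 0} \frac{|a_j|\,|a_k|}{1 + \max(j,k)}.
\]

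The main contribution, coming from indices $j, k \ge 1$, is controlled directly by \eqref{HLP-ineq} applied to the nonnegative sequence $(|a_j|)_{j \ge 1}$, yielding $\sum_{j,k \ge 1} |a_j|\,|a_k|/\max(j,k) \le 4\sum_{j\ge 1} |a_j|^2 \le 4$. The boundary terms with $j = 0$ or $k = 0$ contribute at most $2|a_0| \sum_{k \ge 0} |a_k|/(1+k)$, which is bounded by Cauchy--Schwarz using $\sum 1/(1+k)^2 < \infty$. I do not anticipate any serious obstacle: the whole argument is a one-shot reduction to \eqref{HLP-ineq}, and the hardest point is merely bookkeeping the boundary terms where the tail estimate $C/(1+M)$ is used in place of $C/M$.
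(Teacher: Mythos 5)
Your proposal is correct and follows essentially the same route as the paper: reindex, expand the square, interchange the order of summation, use the tail bound on $\sum (1+m^2)^{-1}$, and invoke the Hardy--Littlewood--P\'olya inequality \eqref{HLP-ineq}. You are slightly more careful than the paper at the index $0$: the paper writes the bound in the form $C\sum_{\ell,\ell'\ge 0} |a_\ell||a_{\ell'}|/\max(\ell,\ell')$ and cites \eqref{HLP-ineq}, glossing over the degenerate term $\ell=\ell'=0$ and the fact that \eqref{HLP-ineq} starts the sums at $1$; your explicit split of the $j=0$ or $k=0$ terms (handled by Cauchy--Schwarz) cleanly addresses that minor blemish.
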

\begin{proof}
There exits a constant $C> 0$, such that $\sum_{N}^\infty ( 1 + m^2)^{-1} \le CN^{-1}$ for any integer $N\ge 1$. Therefore, 
\begin{align*}
& \sum_{n = -\infty}^0 \frac{1}{1 + n^2}   \Big( \sum_{k = n}^0  |a_{k-n}|\Big)^2   = \sum_{m=0}^\infty \frac{1}{1 + m^2} \sum_{\ell, \ell' =0}^m | a_\ell| | a_{\ell'}| 
\\
& = \sum_{\ell, \ell' =0}^\infty  | a_\ell| | a_{\ell'}| \sum_{m = \max( \ell, \ell')}^\infty \frac{1}{1 + m^2} \le C \sum_{\ell, \ell' =0}^\infty  \frac{| a_\ell| | a_{\ell'}|}{\max(\ell, \ell')}.
\end{align*}
By Hardy-Littlewood-P\'olya inequality \eqref{HLP-ineq}, we complete the proof of the lemma. 
\end{proof}

\begin{proof}[Proof of Proposition \ref{prop_2_inf}]
Assume first that $H_\varphi: H^2(\T) \longrightarrow L^\infty(\T)$ is a bounded operator.  Then the  equality  \eqref{hankel_norm} implies  that 
\begin{multline*}
\infty > \int_\T   \sum_{n=0}^\infty   \left|   \sum_{m=-\infty}^{-n} \widehat{\varphi}(m) e^{i m \theta}\right|^2   \frac{d \theta}{2\pi} =  \sum_{n=0}^\infty \sum_{m = -\infty}^{-n} | \widehat{\varphi} (m)|^2  = 
\\
 =\sum_{m=-\infty}^0  |\widehat{\varphi}(m)|^2 \sum_{n = 0}^{-m}  1 =  \sum_{m=-\infty}^0 (1 + |m|) |\widehat{\varphi}(m)|^2 \ge \sum_{m = -\infty}^0 (1 + m^2)^{1/2} | \widehat{\varphi}(m)|^2.
\end{multline*}
This implies that $R_{-}(\varphi) \in \BH^{\frac{1}{2}}(\T)$. 

Conversely, assume now that $\varphi$ is a symbol such that 
$
R_{-}(\varphi)\in  \BH^{1}(\T).
$
That is, 
\[
\sum_{m = -\infty}^0 (1 + m^2) | \widehat{\varphi}(m)|^2 <\infty.
\]
Therefore, by \eqref{def_h_f},  for any trigomometric analytic polynomial $f(e^{i\theta}) = \sum_{n\in \N} a_n e^{in \theta}$ such that $\| f\|_{H^2}^2 = \sum_{n\in\N} |a_n|^2 \le 1$, by Cauchy-Schwarz inequality and Lemma \ref{lem_HLP},  we have, 
\begin{align*}
 \| H_\varphi (f)\|_{\BA_1(\T)}  & =  \sum_{k = -\infty}^0  \sum_{n =-\infty}^{k}|  \widehat{\varphi}(n) a_{k-n} |   = \sum_{n=-\infty}^0 |  \widehat{\varphi}(n) | \sum_{k= n}^0 |a_{k-n}| 
\\
& \le \Big( \sum_{n=-\infty}^0 |  \widehat{\varphi}(n) |^2 (1 + n^2) \Big)^{1/2 } \cdot \Big( \sum_{n = -\infty}^0 \frac{1}{1 + n^2}   \big( \sum_{k = n}^0  |a_{k-n}|\big)^2 \Big)^{1/2}<\infty. 
\end{align*}
Thus $H_\varphi$ defines a bounded operator from $H^2(\T)$ to $\BA_1(\T)$.
\end{proof}

\section*{Acknowledgements}
The authors would like to thank Prof. Jeffrey E. Steif for useful discussions. Y. Qiu is supported by grants NSFC Y7116335K1,  NSFC 11801547 and NSFC 11688101 of National Natural Science Foundation of China. Z. Wang is supported by NSFC 11601296.


\end{document}